\DeclareMathOperator{\WF}{WF}
\DeclareMathOperator{\supp}{supp}
\DeclareMathOperator{\expo}{exp}
\theoremstyle{plain}
\newtheorem*{pp*}{Proposition}	
\newtheorem{pp}{Proposition}
\newtheorem*{cl*}{Claim}
\theoremstyle{definition}
\newtheorem*{remark*}{Remark}
\theoremstyle{definition}
\theoremstyle{definition}
\newtheorem{eg}{Example}[]
\newtheorem{lm}{Lemma}
\newtheorem{theorem}{Theorem}[]
\newtheorem{corollary}{Corollary}[]
\begin{document} 	
\title{Artifacts in the inversion of the broken ray transform in the plane}
\author[Y. Zhang]{Yang Zhang}
\address{Purdue University \\ Department of Mathematics}
\email{zhan1891@purdue.edu}
\thanks{Partly supported by NSF Grant DMS-1600327}
\begin{abstract}
	We study the integral transform over a general family of broken rays in $\mathbb{R}^2$. It is natural for broken rays to have conjugate points, for example, when they are reflected from a curved boundary. If there are conjugate points, we show that the singularities cannot be recovered from local data and therefore artifacts arise in the reconstruction. We apply these conclusions to two examples, the V-line Radon transform and the parallel ray transform. In each example, a detailed discussion of the local and global recovery of singularities is given and we perform numerical experiments to illustrate the results.
\end{abstract}
\maketitle
\section{Introduction}\label{setup_sec}
The purpose of this work is to study the integral transform over a general family of broken rays in the plane. 
A broken ray in the Euclidean space is usually defined as the linear path following the law of reflection on the boundary,
which will be an important example of our setting in the following.
In fact, a major motivation of this work is the reconstruction of an unknown function from integrals over such broken rays in medical imaging. 
One promising application is the Single Photon Emission Computed Tomography (SPECT) with Compton cameras.

We define a more general family of broken rays. 
Suppose $f$ is a distribution with compact support.
A \textit{broken ray} is defined as the union of two rays $l_1$ and $l_2$ that do not intersect in the support of $f$ and are related by a diffeomorphism, as in Figure \ref{setupaabb}.  
The broken ray transform $\mathcal{B}f$ is the integral of $f$ along $\nu$
\begin{equation*}
\mathcal{B}f(\nu) = \int f({\nu}(t)) dt.
\end{equation*}
One way to think about this is to imagine that there is a curve smoothly connecting these two half-lines. 
Then it becomes an X-ray type transform over smooth curves.
The connecting curve plays no rule in the analysis, if we always assume that the distribution $f$ is compactly supported away from it. 
\begin{figure}[h]
	\centering
	\begin{subfigure}{0.5\textwidth}
		\centering
		\includegraphics[width =0.6\textwidth, height=0.6\textwidth,keepaspectratio]{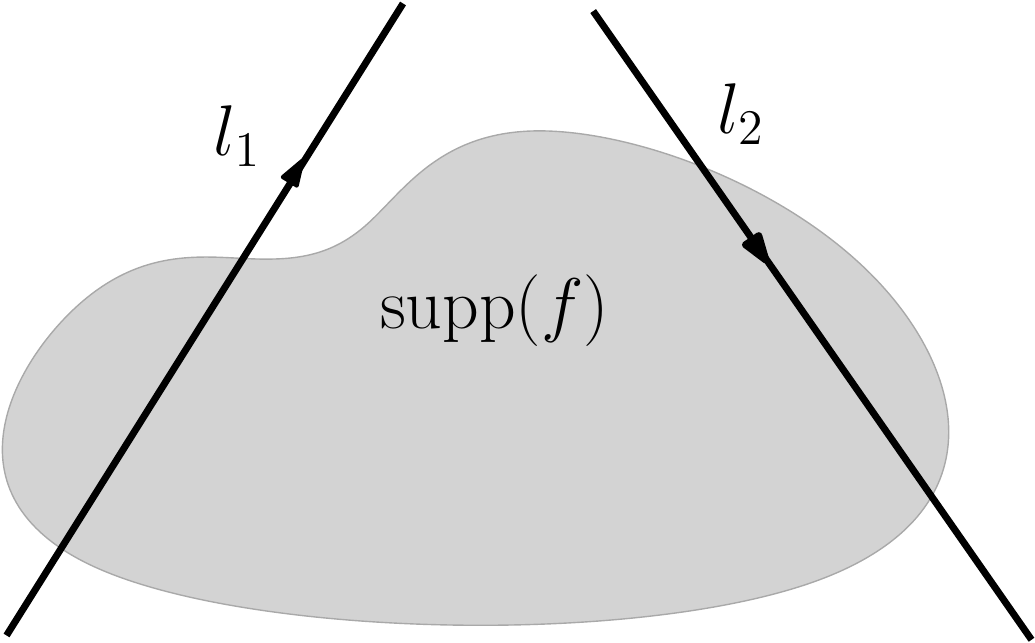}
	\end{subfigure}%
	\begin{subfigure}{0.5\textwidth}
		\centering
		\includegraphics[width =0.6\textwidth, height=0.6\textwidth,keepaspectratio]{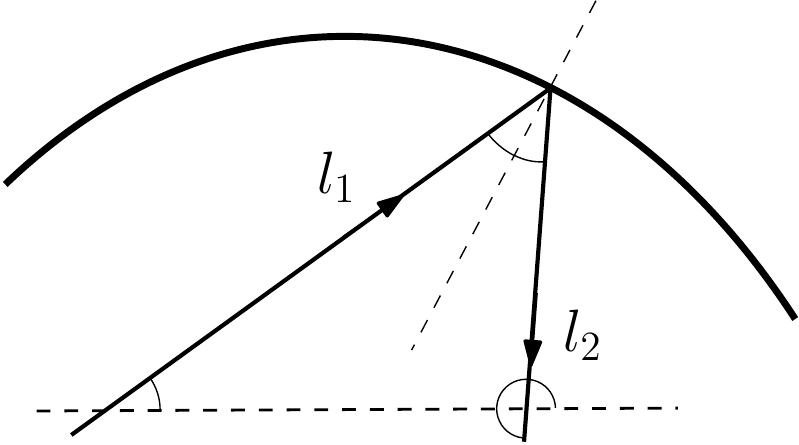}
	\end{subfigure}
	\caption{ Left: a general broken ray, where $l_1$ and $l_2$ are related by a diffeomorphism. Right: a broken ray in the reflection case.}
	\label{setupaabb}
\end{figure}

The goal is to understand what part of the wave front set $\text{\( \WF \)}(f)$ can be recovered from the transform $\mathcal{B} f$. 
The wave front set allows us to recover, in particular, the location and the size of jumps of $f$.
Conjugate points naturally exist for broken rays, for instance, when rays are reflected from a curved boundary. One would expect and we confirm that recovery of singularities are affected by the existence of conjugate points on $\nu$.
Much work has been done for the class of X-ray type transform with conjugate points \cite{MR3080199,MR2970707,MR3339183,Holman2017}. 
In the case of transform for a generic family of smooth curves \cite{MR2365669}, if there are no conjugate points, the localized normal operator is an elliptic pseudodifferential operator of order $-1$. 
Injectivity and the stability estimates are established, which in particular implies that we can recover the singularities uniquely. When conjugate points exist, however, artifacts may arise, and in some situations they cannot be resolved. 
A similar situations occurs in synthetic aperture radar imaging \cite{MR3080199}; it is impossible to recover $\text{\( \WF \)}(f)$ if the singularities hit the trajectory only once, because of the existence of mirror points. On the other hand, if the trajectory is the boundary of a strictly convex domain and we have the priori that $f$ has singularities in a compact set, then we can recover
$\text{\( \WF \)}(f)$ from the global data. However, this is a global procedure and we cannot expect a local reconstruction. 
In the case of X-ray transforms over geodesic-like families of curves with conjugate points of fold type, a detailed description of the normal operator is given in \cite{MR2970707}. The analysis of the normal operator for general conjugate points is in \cite{HOLMAN2015OnTM}. 
Further, \cite{MR3339183} shows that regardless of the type of the conjugate points, the geodesic ray transform on Riemannian surfaces is always unstable and we have loss of all derivatives, which leads to the artifacts in the reconstruction. It is also proved that the attenuated geodesic ray transform is well posed under certain conditions. 
Most recently, \cite{Holman2017} provides a thorough analysis of the stability of attenuated geodesic ray transform and shows what artifacts we can expect when using the Landweber iterative reconstruction for unstable problems. 

As mentioned above, an important example of this setting is the broken ray transform in the usual sense, which is also called V-line Radon transform. As is shown in Figure \ref{setupaabb}, the diffeomorphism is given by the law of reflection. 
The inversion of the V-line Radon transform arises in Single Photon Emission Computed Tomography (SPECT) with Compton cameras in two dimensions.
SPECT based on Anger camera is a widely used technique for functional imaging in medical diagnosis and biological research. The using of Compton camera in SPECT is proposed to greatly improve the sensitivity and resolution \cite{W.Todd1974,Everett1977,Singh1983}. 
The gamma photons are emitted proportionally to markers density and then are scattered by two detectors. Photons can be traced back to broken lines. 
The mathematical model is the cone transform (or conical Radon transform) of an unknown density. 
Various inversion approaches for certain cases are proposed in \cite{Basko1998,Parra2000,Tomitani2002a,Smith2005,Maxim2009,Gouia-Zarrad2014a,Terzioglu2015,Kuchment2016,Jung2015,Schiefeneder2016,Moon2016,Terzioglu2017,Moon2017a,Haltmeier2017}. 
The V-line Radon transform is a special case in two dimensions where each vertex is restricted on a curve and is associated with a single axis. 
There are also some injectivity and stability results when we allow the rays to reflect on the boundary more than once \cite{Ilmavirta2013,Hubenthal2014,Ilmavirta2015,Hubenthal2015}. 
These reconstructions are from full data and most of them assume specific boundaries at least for the reflection part, for example a flat one or a circle. 
It also should be mentioned that the broken ray transform or the V-line Radon transform sometimes refers to a different transform from the one we consider in this work, see \cite{Morvidone2010,Florescu2011,Ambartsoumian2012,Krylov2015}. In their settings, the V-line vertices are inside the object with a fixed axis direction. The integral near the vertices in the support of $f$ makes it possible to recover singularities there. In this work, however, the vertices are always away from support of $f$ and the axis direction changes, which make the recovery more difficult. 

Another motivation is the application of parallel ray transform in X-ray luminescence computed tomography (XLCT). A multiple pinhole collimator based on XLCT is proposed in \cite{Zhang2016} to promote photon utilization efficiency in a single pinhole collimator. 
In this method, multiple X-ray beams are generated to scan a sample at multiple positions simultaneously, which we mathematically model by the parallel ray transform, see Section \ref{parallel_section}.

We are inspired by the spirit of \cite{MR3080199,MR3339183,Holman2017}. In Section \ref{conj_section} and \ref{cancellation_section}, we introduce the definition of conjugate points and conjugate covectors along broken rays and give a characterization of them. 
Then we consider the local problem, that is, the data $\mathcal{B}f$ is known in a small neighborhood of a fixed broken ray. We show that $\mathcal{B}$ is an FIO and the image of two conjugate covectors under its canonical relation are identical. 
Singularities can be canceled by these conjugate covectors. This implies that we can only reconstruct $f$ up to an error in the microlocal kernel.  
We also provide a similar analysis for the numerical result as in \cite{Holman2017}, if the Landweber iteration is used to reconstruct $f$. The main results are listed in the following
\begin{itemize}[noitemsep,topsep=0pt]
	\item[(1)] The local problem is ill-posed if there are conjugate points. Singularities cannot be recovered uniquely.
	\item[(2)] The global problem might be well-posed in some cases for most singularities, depending on a dynamical system inside the domain.
\end{itemize}

In Section \ref{reflection_section} and Section \ref{parallel_section}, we apply these conclusions to two special examples, the V-line Radon transform and the parallel ray transform, as mentioned above. The conjugate points appearing in the V-line Radon transform coincide with the caustics in geometrical optics, see \cite{Boyle2015}. Additionally, when the boundary is a circle, we show that there exists conjugate points of fold type as well as cusps. Geometrically, the caustic inside a circle is an interesting problem itself, which can be traced back to the middle of 19th century \cite{Cayley1857}. We discuss the local and global recovery of singularities and we perform numerical experiments to illustrate the results. In particular, for the reflection case in a circle, we connect our analysis with the inversion formula derived in \cite{Moon2017a}.

Some assumptions and notation are introduced in the following. 
Throughout this work, we assume $f$ is a distribution supported in compact set in $\mathbb{R}^2$. 
As mentioned before, a broken ray $\nu$ is the union of two directed lines $l_1$ and $l_2$ related by a diffeomorphism $\chi$. 
The two lines do not intersect with each other in the support of $f$. 
We always suppose there is a smooth family of them. 
Let $v(\alpha) = (\cos \alpha, \sin \alpha)$  and $w(\alpha) = (-\sin \alpha, \cos \alpha)$.
We use $(s_j,\alpha_j)$ to represent a directed line $\{x \in \mathbb{R}^2 | x \cdot w(\alpha_j) = s_j\}$ with the direction $v(\alpha_j) $ and the unit normal  $w(\alpha_j) $, for $j = 1,2$.
Suppose $l_1$ is the incoming part represented by $(s_1,\alpha_1)$ and $l_2$ is the reflected part represented by $(s_2,\alpha_2)$. They are related by $\chi : (s_1,\alpha_1) \mapsto (s_2,\alpha_2)$. 

\subsection*{Acknowledgements}
The author would like to express the acknowledgments to Prof. Plamen Stefanov for suggesting this problem and for the patient guidance and helpful suggestions he has provided throughout this project.

\section{Conjugate Points}\label{conj_section}
On a Riemannian manifold, the \textit{conjugate vectors} of a fixed point $p$ are vectors such that the differential of the exponential map $\text{\( \expo \)}_p(v)$ with respect to $v$ is not an isomorphism. The \textit{conjugate points} are the image of these vectors under the exponential map.
In \cite{MR3339183}, it is shown that singularities can be canceled by conjugate points in the geodesic ray transform case. Conjugate points also exist in the case of broken ray transform, for exmaple,  the caustics in geometrical optics, see \cite{Cayley1857,Boyle2015}. The light rays reflected or refracted by a curved surface form an envelope, which are conjugate points of the light source.

Suppose the incoming ray $l_1$ starts from $p$ with the angle $\alpha_1$.
Consider the conjugate points of $p$ belonging to $l_2$.
We show below the conjugate points on $l_2$ do not depend on what kind of connecting curve we choose, if $l_1$ and $l_2$ are given.
Notice the X-ray parametrization $(p,\alpha_1)$ gives us a Radon parameterization $(s_1,\alpha_1)$ by $s_1 = \langle p, w(\alpha_1) \rangle$.
Fix $p$, for each $\alpha_1$, the diffeomorphism $\chi$ gives another ray $(s_2,\alpha_2)$. Suppose the reflected ray starts at $q_0$ at time $t_2$. Here $q_0 = q_0(s_1,\alpha_1)$ is chosen in a smooth way and should always satisfy $\langle q_0, w(\alpha_2) \rangle = s_2$. 
Then the exponential map is given in the following
$$
\begin{cases}\label{exponential}
{l_1}(t) = p + t {v}(\alpha_1) , \quad  & 0 \leq t \leq t_1,\\
{l_2}(t) = q_0 + (t-t_2){{v(\alpha_2)}}, \quad & t\geq t_2 \geq t_1.
\end{cases}
$$

To calculate the differential of the exponential map, we choose $\alpha_1$ and $t$ as the parameters, that is, we change the starting angle and the length. 
We have
\begin{align*}
\frac{\partial {l_2}}{\partial \alpha_1} 
&=(t-t_2) \frac{d {{v(\alpha_2)}}}{d \alpha_1} - \frac{d t_2}{d \alpha_1} {{v(\alpha_2)}} + \frac{d {q_0}}{d \alpha_1}\\
&= \big((t-t_2) \big( \frac{d {{\alpha_2}}}{d \alpha_1} \big) + \langle \frac{d {q_0}}{d \alpha_1},  w(\alpha_2) \rangle \big)  w(\alpha_2) +  \big(\langle \frac{d {q_0}}{d \alpha_1}, v(\alpha_2) \rangle   - \frac{d t_2}{d \alpha_1} \big) {{v(\alpha_2)}}.
\end{align*}
Then the Jacobi matrix $\frac{\partial l_2(t)}{\partial(t,\alpha_1)}$ 
has the following determinant
\begin{align}\label{det}
\begin{split}
\det(d \text{\( \expo \)}_{{p}}(t {v(\alpha_1)}))= 
\det
\left[
\begin{array}{cc}
\frac{\partial {l_2}}{\partial t} & \frac{\partial {l_2}}{\partial \alpha_1} 
\end{array}
\right]
= (t-t_2) \big( \frac{d {{\alpha_2}}}{d \alpha_1} \big) + \langle \frac{d {q_0}}{d \alpha_1},  w(\alpha_2) \rangle.
\end{split}
\end{align}
The last equality comes from the observation that $\det[v(\alpha_2)\ w(\alpha_2)] = 1$.
Thus, the determinant vanishes if and only if $(t-t_2)\big( \frac{d {{\alpha_2}}}{d \alpha_1} \big) = -\langle \frac{d {q_0}}{d \alpha_1},  w(\alpha_2)\rangle $. 
Notice if we assume $q_0$ is the starting point of $l_2$, this equation makes sense when $t - t_2>0$, that is, when $\big( \frac{d {{\alpha_2}}}{d \alpha_1} \big) \langle \frac{d {q_0}}{d \alpha_1},  w(\alpha_2)\rangle \leq 0$.

Then differentiating $\langle q_0, w(\alpha_2) \rangle = s_2$ with respect to $\alpha_1$ shows
\begin{equation}\label{diffq0}
\langle \frac{d q_0}{d \alpha_1}, w(\alpha_2) \rangle + \langle q_0, - v(\alpha_2) \rangle \big( \frac{d {{\alpha_2}}}{d \alpha_1} \big) = \frac{d s_2}{d \alpha_1}.
\end{equation}
If $q$ is the point on $l_2$ at $t$ such that ${d \text{\( \expo \)}_{{p}}(t {v_1})}	$ is not an isomorphism, then we have $t - t_2 = \langle q-q_0,  v(\alpha_2) \rangle$. By (\ref{det})(\ref{diffq0}),  $q$ should satisfy 
\begin{align}\label{equaitonq}
\langle q,  v(\alpha_2) \rangle  \frac{d {{\alpha_2}}}{d \alpha_1}  =  - \frac{d s_2}{d \alpha_1} .
\end{align}
On the contrary, if there exists $q$ on $l_2$ such that the equation (\ref{equaitonq}) is  true, then the determinant of the differential will be zero.
Notice $\frac{d {{\alpha_2}}}{d \alpha_1}$ and $\frac{d s_2}{d \alpha_1}$ cannot vanish at the same time with the assumption that $\chi$ is a diffeomorphism.
This proves the following. 
\begin{pp}\label{conjugatepp}
	Suppose the incoming ray $l_1$ starting from $p$ represented by $(s_1,\alpha_1)$ and the reflected ray $l_2$ starting from $q_0$ represented by $(s_2,\alpha_2)$ are related by a diffeomorphism $\chi$. 
	The point $q_0$ is chosen smoothly depending on $(s_1, \alpha_1)$. 
	Then 
	\begin{itemize}
		\item[(a)] $p$ has a conjugate point $q$ belonging to $l_2$ if and only if 
		$$ \frac{d {{\alpha_2}}}{d \alpha_1}  \langle \frac{d {q_0}}{d \alpha_1},  w(\alpha_2)\rangle < 0.$$
		\item[(b)]If this occurs, $q$ is uniquely determined by $\langle q,  v(\alpha_2) \rangle =  -\big( \frac{d {{\alpha_2}}}{d \alpha_1} \big)^{-1}  \frac{d s_2}{d \alpha_1} .$
	\end{itemize}
\end{pp}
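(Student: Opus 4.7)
The plan is to analyze the Jacobian of the exponential map and read off where it degenerates. I would take $(\alpha_1, t)$ as parameters with $p$ held fixed, so the exponential map on the reflected half is the map $(\alpha_1, t) \mapsto l_2(t) = q_0 + (t - t_2) v(\alpha_2)$. The first column $\partial l_2/\partial t = v(\alpha_2)$ is immediate. For the second, I would differentiate with respect to $\alpha_1$, then decompose the result in the orthonormal frame $\{v(\alpha_2), w(\alpha_2)\}$; since $\det[v(\alpha_2)\ w(\alpha_2)]=1$, the component along $v(\alpha_2)$ is irrelevant and only the $w(\alpha_2)$-component contributes to the determinant, producing the expression in (\ref{det}), namely
\begin{equation*}
\det(d\expo_p(t\,v(\alpha_1))) = (t - t_2)\,\frac{d\alpha_2}{d\alpha_1} + \left\langle \frac{dq_0}{d\alpha_1},\, w(\alpha_2)\right\rangle.
\end{equation*}

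For part (a), I would ask when this vanishes for some genuine $t > t_2$, i.e., for a point actually lying on the forward reflected ray rather than its backward extension. Solving the equation gives a strictly positive $t - t_2$ exactly when $(d\alpha_2/d\alpha_1) \langle dq_0/d\alpha_1, w(\alpha_2)\rangle < 0$. The borderline case where one factor is zero must be ruled out by hand: because $\chi$ is a diffeomorphism, $d\alpha_2/d\alpha_1$ and $ds_2/d\alpha_1$ cannot both vanish, and combined with (\ref{diffq0}) this prevents the vanishing-determinant equation from having a $t > t_2$ solution in the degenerate case. That yields the stated sign criterion.

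For part (b), I would convert the vanishing condition into an intrinsic statement about $q$ itself by substituting $t - t_2 = \langle q - q_0, v(\alpha_2)\rangle$ and then eliminating the non-intrinsic quantity $\langle dq_0/d\alpha_1, w(\alpha_2)\rangle$ using the identity (\ref{diffq0}) obtained by differentiating $\langle q_0, w(\alpha_2)\rangle = s_2$. After the terms involving $\langle q_0, v(\alpha_2)\rangle$ combine cleanly with $t - t_2$, the $q_0$-dependence disappears entirely and one is left with the single scalar equation
\begin{equation*}
\langle q, v(\alpha_2)\rangle\, \frac{d\alpha_2}{d\alpha_1} = -\frac{ds_2}{d\alpha_1},
\end{equation*}
which, since $l_2$ has direction $v(\alpha_2)$, pins down $q$ uniquely on $l_2$. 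The main obstacle I anticipate is bookkeeping rather than conceptual: ensuring that the auxiliary choice of base point $q_0$ drops out of the final formula, which is exactly the reason the proof hinges on using (\ref{diffq0}) to trade $\langle dq_0/d\alpha_1, w(\alpha_2)\rangle$ for the intrinsic datum $ds_2/d\alpha_1$.
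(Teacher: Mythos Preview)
Your proposal is correct and follows essentially the same route as the paper: compute the Jacobian of the exponential map in the $(\alpha_1,t)$ parameters, extract the $w(\alpha_2)$-component to obtain the determinant formula~(\ref{det}), read off the sign condition for a solution with $t-t_2>0$, and then use the identity~(\ref{diffq0}) together with $t-t_2=\langle q-q_0,v(\alpha_2)\rangle$ to eliminate $q_0$ and arrive at the intrinsic equation for $q$. Your explicit treatment of the borderline case (invoking that $d\alpha_2/d\alpha_1$ and $ds_2/d\alpha_1$ cannot vanish simultaneously) is in fact a bit more careful than the paper's own discussion, which writes $\le 0$ in the text preceding the proposition but states the strict inequality in the proposition itself.
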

\begin{remark*}\label{conjugateperturb}

	If we consider the whole straight line which $l_2$ belongs to instead of the ray, then we can always find one and the only one conjugate point $q$ satisfying (b), unless $
	\frac{d {{\alpha_2}}}{d \alpha_1}=0$. 
	The condition (a) is to check whether this $q$ belong to the reflected ray that we define. 
	Additionally, if we perturb $q_0$ a little bit, that is, let $q_0' = q_0 + \epsilon(\alpha_1) v(\alpha_2)$. Then 
	$\frac{d {q_0'}}{d \alpha_1} = \frac{d {q_0}}{d \alpha_1} +\epsilon(\alpha_1) w(\alpha_2) +  \frac{d \epsilon(\alpha_1)}{d \alpha_1}v(\alpha_2)$.
	We have
	$$ 
	\langle \frac{d {q_0'}}{d \alpha_1},  w(\alpha_2)\rangle = \langle \frac{d {q_0}}{d \alpha_1},  w(\alpha_2)\rangle + \epsilon(\alpha_1).
	$$	
	This shows a small enough perturbation of $q_0$ doesn't change the sign of $ 
	\langle \frac{d {q_0}}{d \alpha_1},  w(\alpha_2)\rangle$. Therefore the existence of conjugated points is not affected by the choice of $q_0$ in a small neighborhood.
\end{remark*}
%

The proof above implies $p$ and $q$ are conjugate points to each other in some sense by the following reasons. Suppose we know that $p$ and $q$ belong to $\nu$. The point $q$ is the conjugate point of $p$ if and only if 
$$
\langle q,  v(\alpha_2) \rangle =  -\big( \frac{d {{\alpha_2}}}{d \alpha_1} \big)^{-1}  \frac{d s_2}{d \alpha_1} 
= -\frac{\frac{\partial s_2}{\partial \alpha_1} - \frac{\partial s_2}{\partial s_1} \langle p, v(\alpha_1) \rangle}{\frac{\partial \alpha_2}{\partial \alpha_1} - \frac{\partial \alpha_2}{\partial s_1} \langle p, v(\alpha_1) \rangle}.
$$
Solving $\langle p,v(\alpha_1)\rangle $ out, we have
\begin{equation}\label{solvep}
\langle p,  v(\alpha_1) \rangle
= -\frac{-\frac{\partial s_2}{\partial \alpha_1} - \frac{\partial \alpha_2}{\partial \alpha_1} \langle q, v(\alpha_2) \rangle}{\frac{\partial s_2}{\partial s_1} + \frac{\partial \alpha_2}{\partial s_1} \langle q, v(\alpha_2) \rangle}.
\end{equation}
Now let $\nu'$ be a broken ray coinciding with $\nu$ but in the opposite direction. Actually $\nu'$ is one of the family of broken rays that are associated with $\chi^{-1}$. We list the Jacobian matrix in the following 
\[
d\chi = 
\begin{bmatrix}
\begin{array}{cc}
\frac{\partial {s_2}}{\partial s_1} &  \frac{\partial {s_2}}{\partial \alpha_1}\\	
\frac{\partial \alpha_2}{\partial s_1}&  \frac{\partial \alpha_2}{\partial \alpha_1}
\end{array}
\end{bmatrix},
\qquad
d(\chi^{-1}) = (d\chi)^{-1} = \frac{1}{{\det(d\chi)}}
\begin{bmatrix}
\begin{array}{rr}
\frac{\partial \alpha_2}{\partial \alpha_1} & -\frac{\partial {s_2}}{\partial \alpha_1} \\	
-\frac{\partial \alpha_2}{\partial s_1} & \frac{\partial {s_2}}{\partial s_1} 
\end{array}
\end{bmatrix}.
\]
Notice equation (\ref{solvep}) exactly means $p$ is the conjugate point of $q$ along $\nu'$.


\section{Microlocal analysis for local problems}\label{cancellation_section}
Recall the definition of a broken ray in Section \ref{setup_sec}. We define the broken ray transform $\mathcal{B}f$ as the integral of $f$ along $\nu$
\begin{equation}
\mathcal{B}f(s,\alpha) = \int_{\nu_{s,\alpha}}{ a(y,\alpha) f(y) }dy,
\end{equation}
where $a(y,\alpha)$ is a smooth nonzero weight.
Notice $\chi$ could be the reflection operator, in which case we have a broken ray transform in the classical sense.    

Suppose $f$ has support in a compact subset away from the connecting part. Then the support of $f$ implies the transform can be interpreted as the sum of Radon transforms over two lines.
We can only expect to recover the singularities in their conormal bundle.
Thus, for a fixed broken ray $\nu_0$, we consider $(x_1,\xi^1)$ and $(x_2,\xi^2)$ in its incoming and outgoing part respectively, with $\xi^1$ and $\xi^2$ conormal to them. 
Let $\Gamma(\nu_0)$ be a small neighborhood of $\nu_0$, and $U_i$ be disjoint small open neighborhoods of $x_i$, $i=1, 2$.
We choose these neighborhoods small enough, such that $U_1$ is disjoint from all outgoing part and $U_2$ is disjoint from all incoming part in $\Gamma(\nu_0)$. 
We extend $U_i$ to some small conic neighborhood $V^i$ of $(x_i,\xi^i)$ in the conormal bundle, for $i=1, 2$.
\begin{figure}[h]\label{neighbor}
	\includegraphics[height=0.25\textwidth]{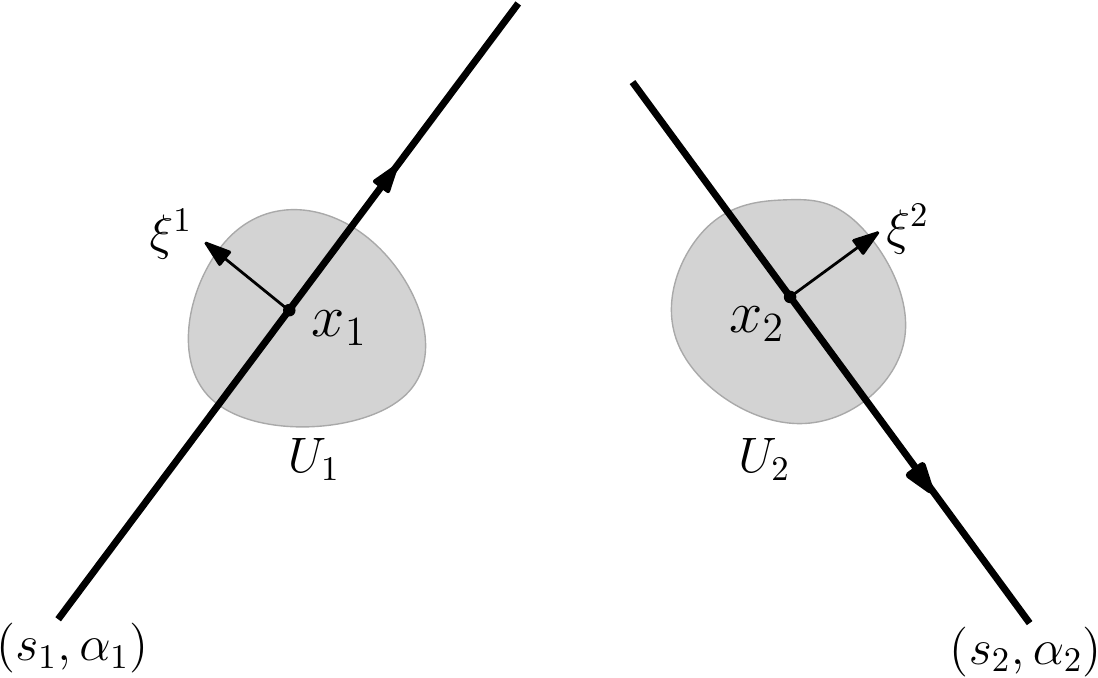}
	\caption{The small neighborhood $U_i$ and $(x_i, \xi^i)$, $i = 1,2$. }
\end{figure}

Suppose $\text{\( \WF \)}(f) \subset V^1 \cup V^2$. For convenience, we can simply assume $\supp f \subset U_1 \cup U_2$. Let $f_i$ be $f$ restricted to $U_i$, and $R_i$ be $\mathcal{B}$ restricted to distributions with wavefront set supported in $V^i$, $i=1,2$.
We have
\begin{equation}\label{B12}
\mathcal{B}f = R_1 f_1+R_2 f_2.
\end{equation}
Here we use the notation $R_i$ because in the small neighborhood $\Gamma(\nu_0)$, $\mathcal{B}f_1$ is the Radon transform of $f_1$ and $\mathcal{B}f_2$ can be regarded as the Radon Transform performing along the line $({s_2},{\alpha_2})$. 
More precisely, the restricted operators $R_1$ and $R_2$ have the following form:
$$R_1 = \phi R \varphi_1, \quad R_2 = \phi \chi^*R \varphi_2,$$
where $\phi(s,\alpha)$ is a smooth cutoff function with $\supp\phi \subset \Gamma(\nu_0)$; $\varphi_i$ are cutoff pseudodifferential operators with essential support in $V^i$, $i=1, 2$; the pull back $\chi^*g(s,\alpha) = g(\chi(s,\alpha)) $ is induced by the diffeomorphism $\chi$.
We should note that outside $\Gamma(\nu_0)$, there might be another broken ray which carries the singularities $(x_1,\xi^1)$ but with it in the outgoing part. Thus, we actually multiply $\phi$ to $\mathcal{B}$ itself as well to make equation (\ref{B12}) valid.

To analyze the canonical relation of $R_1$ and $R_2$, we need some conclusions of Radon transform.  
The weighted Radon transform $R$ is defined in the following:
\begin{equation}
R f(s,\alpha) = \int_{y \cdot w(\alpha) = s} a(y,\alpha) f(y) dy,
\end{equation}
where $w(\alpha) = (-\sin \alpha, \cos \alpha)$ as before, and $a(y, \alpha)$ is a smooth weight function.
\begin{pp}\label{Radon}
	The Radon transform $R$ is an FIO with the canonical relation 
	\begin{align*}
	C_R
	& = \{ (  
	\underbrace{ \langle y, w(\alpha) \rangle \vphantom{ \lambda \langle v, y \rangle} }_{s},
	\alpha,
	\underbrace{\lambda \vphantom{ \lambda \langle v, y \rangle} }_{\widehat{s}},
	\underbrace{\lambda \langle v(\alpha), y \rangle \vphantom{ \lambda \langle v, y \rangle}}_{\widehat{\alpha}},
	y,
	\underbrace{\lambda w \vphantom{ \lambda \langle v, y \rangle}}_{\eta}
	)
	\}.
	\end{align*}
	where $v(\alpha) = (\cos \alpha, \sin \alpha)$ and $w(\alpha)$ are defined as before.
	Specifically, $C_R$ has two components, corresponding to the choice of the sign of $\lambda$. Each component is a local diffeomorphism. The inverse is also a local diffeomorphism.
	\begin{align}\label{cr1}
	&C_{R}^{\pm}:  (y,\eta) \mapsto (s,\alpha,\lambda,\lambda \langle v,y \rangle)
	\quad \lambda = \pm |\eta|, \ \alpha = \arg(\pm \frac{\eta}{|\eta|}),\ s = \langle y, w(\alpha) \rangle\\
	&C_{R}^{-1}:  (s,\alpha,\widehat{s},\widehat{\alpha}) \mapsto (y,\eta) \quad y = \frac{\widehat{\alpha}}{\widehat{s}} v + s w, \ \eta = \widehat{s}w
	\end{align}
\end{pp}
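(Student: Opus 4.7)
The plan is to exhibit $R$ as an oscillatory integral and then read off the canonical relation from its phase function via the standard H\"ormander recipe. Writing the line delta as a Fourier integral,
\begin{equation*}
Rf(s,\alpha) = \frac{1}{2\pi} \int_{\mathbb{R}} \int_{\mathbb{R}^2} e^{i\lambda(s - \langle y, w(\alpha)\rangle)}\, a(y,\alpha)\, f(y) \, dy\, d\lambda,
\end{equation*}
so that $R$ is realized as an FIO with phase $\varphi(s,\alpha,y,\lambda) = \lambda\bigl(s - \langle y, w(\alpha)\rangle\bigr)$, amplitude $a$, and single fiber variable $\lambda \in \mathbb{R}\setminus 0$. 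Nondegeneracy of $\varphi$ in the sense of H\"ormander is immediate since $d_{(s,\alpha,y)}(\partial_\lambda \varphi)$ has $s$-component equal to $1$ and hence never vanishes.

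Next I would compute the ingredients of the canonical relation. Using $\partial_\alpha w(\alpha) = -v(\alpha)$, a direct differentiation yields
\begin{equation*}
d_{(s,\alpha)} \varphi = \bigl(\lambda,\ \lambda \langle v(\alpha), y\rangle\bigr), \qquad -d_y \varphi = \lambda\, w(\alpha),
\end{equation*}
while the critical set $\{\partial_\lambda \varphi = 0\}$ is exactly the incidence relation $\{s = \langle y, w(\alpha)\rangle\}$. The usual recipe
\begin{equation*}
C_\varphi = \bigl\{\bigl(s,\alpha,\, d_{(s,\alpha)}\varphi;\ y,\, -d_y\varphi\bigr)\ :\ \partial_\lambda\varphi = 0\bigr\}
\end{equation*}
then produces exactly the set $C_R$ stated in the proposition, with $\widehat{s} = \lambda$, $\widehat{\alpha} = \lambda\langle v(\alpha),y\rangle$, and $\eta = \lambda w(\alpha)$.

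Finally, for the two-component decomposition and the local diffeomorphism claims, I would split $C_R$ according to the sign of $\lambda \neq 0$. Given $(y,\eta)\in T^*\mathbb{R}^2\setminus 0$, the equation $\eta = \lambda w(\alpha)$ uniquely determines $(\lambda, \alpha)$ on each sign component: $\lambda = \pm|\eta|$ and $\alpha = \arg(\pm \eta/|\eta|)$, after which $s$ and $\widehat{\alpha}$ are given by the explicit formulas. This produces a smooth bijection $C_R^{\pm} \to T^*\mathbb{R}^2\setminus 0$ with smooth inverse, and composing with the projection onto $T^*(\mathbb{R}\times S^1)\setminus 0$ gives the displayed map and its inverse. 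The only real care needed is bookkeeping with signs and with the circle-valued variable $\alpha$; I do not expect any genuine obstacle here since $w(\alpha)$ determines $\alpha \in S^1$ uniquely.
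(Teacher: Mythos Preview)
Your proposal is correct and follows essentially the same route as the paper: write $R$ as an oscillatory integral with phase $\lambda(s-\langle y,w(\alpha)\rangle)$, compute the partial derivatives of the phase, and read off the canonical relation from the critical set and the H\"ormander recipe. If anything, you are slightly more thorough than the paper's proof, since you explicitly verify nondegeneracy of the phase and spell out the bijection between each sign component and $T^*\mathbb{R}^2\setminus 0$, whereas the paper simply asserts the two-component diffeomorphism structure.
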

\begin{proof}
	We write the Radon transform as 
	$$
	Rf(s,\alpha) = (2\pi)^{-1} \int \int e^{i\lambda(s-y\cdot {w}) }a(y,{\alpha}) f(y)d\lambda dy.
	$$
	The characteristic manifold is $Z = \{(s,\alpha,y) | \Phi(s,\alpha,y) =\lambda (s - y\cdot {w}) = 0\}$. Then the Lagrangian $\Lambda$ is given by
	\begin{align*}
	\Lambda = N^*Z  =
	& = \{ (s,
	\alpha,
	y,
	\underbrace{\lambda \vphantom{ \lambda \langle v, y \rangle} }_{\Phi_s},
	\underbrace{\lambda \langle v, y \rangle \vphantom{ \lambda \langle v, y \rangle}}_{\Phi_{\alpha}},
	\underbrace{-\lambda w \vphantom{ \lambda \langle v, y \rangle}}_{\Phi_y}
	), \ 
	y \cdot w = s\}.
	\end{align*}
	Therefore, the Radon transform is an FIO associated with $\Lambda$ and the canonical relation $C_R$ is obtained by twisting the Lagrangian. The sign of $\lambda$ is chosen corresponding to the orientation of $\eta$ with respect to $w$. 
	It is elliptic at $(y,\eta)$ if and only if $a(y,\alpha) \neq 0$ for $\alpha$ such that $w(\alpha)$ is colinear with $\eta$.
\end{proof}
\begin{lm}
	Suppose $\chi$ is a diffeomorphism. Then $\chi^*$ is an FIO of which the canonical relation is a diffeomorphism
	\begin{align}\label{cchi}
	C_{\chi^*}
	& = \{ (s_1,
	{\alpha_1},
	{\widehat{s_1}},
	\widehat{\alpha_1},
	s_2,
	\alpha_2,
	\underbrace{ (\widehat{s_1},\widehat{\alpha_1})\big({d\chi}\big)^{-1} }_{(\widehat{s_2},\widehat{\alpha_2})}
	),\ 
	(s_2,\alpha_2) = \chi(s_1,\alpha_1)
	\}.
	\end{align}
\end{lm}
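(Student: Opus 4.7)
The plan is to represent $\chi^{*}$ as a Fourier integral operator via the Fourier inversion formula and then read off its canonical relation from the phase function, in the same style as the proof of Proposition \ref{Radon}.

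First I would write
$$
\chi^{*}g(s_1,\alpha_1) = g(\chi(s_1,\alpha_1)) = (2\pi)^{-2}\iiiint e^{i(\chi(s_1,\alpha_1)-(s_2,\alpha_2))\cdot(\widehat{s_2},\widehat{\alpha_2})}\,g(s_2,\alpha_2)\,ds_2\,d\alpha_2\,d\widehat{s_2}\,d\widehat{\alpha_2},
$$
so that the phase is $\Phi = (\chi(s_1,\alpha_1)-(s_2,\alpha_2))\cdot(\widehat{s_2},\widehat{\alpha_2})$ with fiber variables $(\widehat{s_2},\widehat{\alpha_2})$. Next I would identify the characteristic manifold: setting $\partial_{(\widehat{s_2},\widehat{\alpha_2})}\Phi = 0$ forces $(s_2,\alpha_2) = \chi(s_1,\alpha_1)$, so
$$
Z = \{(s_1,\alpha_1,s_2,\alpha_2,\widehat{s_2},\widehat{\alpha_2}) : (s_2,\alpha_2)=\chi(s_1,\alpha_1)\}.
$$

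Then I would compute the Lagrangian $\Lambda = N^{*}Z$. The derivatives of $\Phi$ with respect to the base variables are $\Phi_{s_2} = -\widehat{s_2}$, $\Phi_{\alpha_2} = -\widehat{\alpha_2}$, and $(\Phi_{s_1},\Phi_{\alpha_1}) = (\widehat{s_2},\widehat{\alpha_2})\, d\chi$ (treating covectors as row vectors). Twisting the Lagrangian in the standard way — flipping the sign in the second factor — yields exactly the relation
$$
(\widehat{s_1},\widehat{\alpha_1}) = (\widehat{s_2},\widehat{\alpha_2})\, d\chi,\qquad (s_2,\alpha_2)=\chi(s_1,\alpha_1),
$$
which inverts to $(\widehat{s_2},\widehat{\alpha_2}) = (\widehat{s_1},\widehat{\alpha_1})(d\chi)^{-1}$ as stated in (\ref{cchi}). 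Since $\chi$ is a diffeomorphism, $d\chi$ is everywhere invertible, so the map from $(s_1,\alpha_1,\widehat{s_1},\widehat{\alpha_1})$ to $(s_2,\alpha_2,\widehat{s_2},\widehat{\alpha_2})$ is a global diffeomorphism of the cotangent bundles (with the zero sections removed), which shows that $C_{\chi^{*}}$ is the graph of a canonical transformation.

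There is no real obstacle here; the only thing that needs care is the sign/transpose bookkeeping that converts $\Phi_{(s_1,\alpha_1)}$ into a genuine cotangent covector on the source side, and the observation that invertibility of $d\chi$ (which is what distinguishes a diffeomorphism from a mere smooth map) is exactly what is needed to conclude that the canonical relation is a diffeomorphism rather than a more general Lagrangian correspondence.
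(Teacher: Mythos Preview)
Your proposal is correct and follows essentially the same approach as the paper: write $\chi^{*}$ as an oscillatory integral via Fourier inversion, read off the Lagrangian from the phase, and invert $(\widehat{s_1},\widehat{\alpha_1}) = (\widehat{s_2},\widehat{\alpha_2})\,d\chi$. The only differences are cosmetic --- the paper names the fiber variables $(\lambda_1,\lambda_2)$ rather than $(\widehat{s_2},\widehat{\alpha_2})$ and writes the phase with the opposite sign convention --- and your added remark on why invertibility of $d\chi$ makes $C_{\chi^*}$ a diffeomorphism is a welcome clarification that the paper leaves implicit.
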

\begin{proof}
	The proof is similar to what we did in last proposition. Since $\chi^*: g(s_2,\alpha_2) \mapsto \chi^*g(s_1,\alpha_1) = g(\chi(s_1,\alpha_1))$ for any distribution $g$, it can be written as the following integral
	\begin{align*}
	\chi^*g(s_1,\alpha_1) & = \int \delta((s,\alpha) -\chi(s_1,\alpha_1)) g(s,\alpha) ds d\alpha\\
	&= (2\pi)^{-2}\int e^{i (\lambda_1(s - s_2)+\lambda_2(\alpha - \alpha_2)) } g(s,\alpha) d \lambda_1 d\lambda_2 ds d\alpha,
	\end{align*}
	where $\chi(s_1,\alpha_1) = (s_2,\alpha_2)$.
	The characteristic manifold is  $Z_{\chi^*} = \{(s,\alpha,s_1,\alpha_1) |\  \phi =\lambda_1( s_2 - s)+\lambda_2( \alpha_2 - \alpha) = 0 \}$. The Lagrangian is given by 
	\begin{align*}
	\Lambda_{\chi^*}
	& = \{ (s_1,
	{\alpha_1},
	s,
	\alpha,
	\underbrace{ (\lambda_1,\lambda_2)(d\chi)}_{\Phi_{s_1,\alpha_1}},
	\underbrace{ -(\lambda_1,\lambda_2) }_{\Phi_{s,\alpha}}
	),\ 
	(s,\alpha) = \chi(s_1,\alpha_1)
	\}.
	\end{align*}
	Let $(\lambda_1,\lambda_2)(d\chi) = (\widehat{s_1},\widehat{\alpha_1})$ and replace $(s,\alpha)$ by $(s_2,\alpha_2)$, we get the canonical relation as is shown above (\ref{cchi}).
\end{proof}
The composition operator $\chi^* R$ is also an FIO of which the canonical relation $C_{\chi^* R}= C_{\chi^*} \circ C_{R}$ is a local diffeomorphism. 
Additionally, since the multiplication of cutoff functions does not influence the Lagrangian, therefore, the restricted operator $R_i$ has the same canonical relation as above, call it $C_i$, in the small neighborhood of $\nu_0$, $i=1,2$.

Suppose $(s_1,\alpha_1,\widehat{s_1},\widehat{\alpha_1})$ and $(s_2,\alpha_2,\widehat{s_2},\widehat{\alpha_2})$ are images of $(x_1,\xi^1)$ and $(x_2,\xi^2)$ under $C_R$.
That is, with $s_i$and $\alpha_i$ given by (\ref{cr1}), we have
$$
(\widehat{s_1},\widehat{\alpha_1}) = \lambda_1(1, \langle x_1, v(\alpha_1)\rangle), \quad 
(\widehat{s_2},\widehat{\alpha_2}) = \lambda_2(1, \langle x_2, v(\alpha_2)\rangle).
$$
Then from the analysis above, $C_1(x_1,\xi^1)=C_2(x_2,\xi^2)$ if and only if
$$(s_2,\alpha_2) = \chi(s_1,\alpha_1), \quad
(\widehat{s_2},\widehat{\alpha_2}) = (\widehat{s_1},\widehat{\alpha_1})\big( d \chi \big)^{-1}$$
The first equality says there is a broken ray $\nu$ of which $(s_1,\alpha_1)$ and $(s_2,\alpha_2)$ are the incoming and outgoing part.  
The second condition is equivalent to 
\begin{align}\label{yyy}
\lambda_2 (1,\langle x_2,v(\alpha_2) \rangle) = \frac{\lambda_1}{{\det(d\chi)}} \big( \frac{\partial \alpha_2}{\partial \alpha_1}-
\langle x_1, v(\alpha_1) \rangle \frac{\partial \alpha_2}{\partial s_1}, -(\frac{\partial s_2}{\partial \alpha_1}-
\langle x_1, v(\alpha_1) \rangle \frac{\partial s_2}{\partial s_1}) \big).
\end{align}
Notice 
$\frac{\partial \alpha_2}{\partial \alpha_1}-
\langle x_1, v(\alpha_1) \rangle \frac{\partial \alpha_2}{\partial s_1}$
and $\frac{\partial s_2}{\partial \alpha_1}-
\langle x_1, v(\alpha_1) \rangle \frac{\partial s_2}{\partial s_1}$ are exactly $\frac{d \alpha_2}{d \alpha_1}$ and $\frac{d s_2}{d \alpha_1}$ if we fixed $x_1$ and consider $s_2, \alpha_2$ as functions of one variable $\alpha_1$. Therefore
(\ref{yyy}) is true if and only if 
\begin{itemize}
	\item[(a)] $\langle x_2,v(\alpha_2) \rangle = -\big(\frac{d \alpha_2}{d \alpha_1}\big)^{-1}\frac{d s_2}{d \alpha_1}$, which implies $x_1$ and $x_2$ are conjugate points along $\nu$,
	\item[(b)] $\lambda_2 =  \frac{\lambda_1}{\det(d\chi)}\big(\frac{d \alpha_2}{d \alpha_1}\big)$, with $\xi_1 = \lambda_1 w(\alpha_1)$ and $\xi_2 = \lambda_2 w(\alpha_2)$.
\end{itemize}
\begin{theorem}\label{cancelofC}
	We have $ C_1(x,\xi) = C_2(y,\eta)$ if and only if there is a broken ray $\nu$ joining $x$ and $y$ such that
	\begin{itemize}
		\item[(a)] $x$ and $y$ are conjugate points along $\nu$.
		\item[(b)] $\xi$ and $\eta$ satisfies $\xi = \lambda w(\alpha_1), \eta = \frac{\lambda}{\det(d\chi)} \big(\frac{d \alpha_2}{d \alpha_1}\big) w(\alpha_2)$ for some $\lambda \neq 0$, where $\alpha_1$ is the angle of the incoming part and  $\alpha_2$ is the angle of the reflected part of $\nu$.
	\end{itemize} 
\end{theorem}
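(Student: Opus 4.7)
The plan is a direct bookkeeping computation on top of the canonical-relation data already assembled. By Proposition 2 and the lemma on $\chi^*$, the restricted operators $R_1 = \phi R\varphi_1$ and $R_2 = \phi\chi^* R\varphi_2$ are FIOs with canonical relations $C_1 = C_R$ and $C_2 = C_{\chi^*}\circ C_R$, since the smooth cutoffs do not alter the underlying Lagrangians. Hence $C_1(x,\xi) = C_2(y,\eta)$ is equivalent to asking that $C_R(x,\xi)$ and $C_R(y,\eta)$ be related by $C_{\chi^*}$.

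First I would parametrize the two $C_R$-images using formula (5): write $\xi = \lambda_1 w(\alpha_1)$ and $\eta = \lambda_2 w(\alpha_2)$, so the images read
\[(s_1,\alpha_1,\lambda_1,\lambda_1\langle x,v(\alpha_1)\rangle)\quad\text{and}\quad(s_2,\alpha_2,\lambda_2,\lambda_2\langle y,v(\alpha_2)\rangle),\]
with $s_i = \langle\,\cdot\,,w(\alpha_i)\rangle$ evaluated at $x$ and $y$ respectively. The base-point identity $(s_2,\alpha_2) = \chi(s_1,\alpha_1)$ in the definition of $C_{\chi^*}$ then says precisely that the two lines $l_1$ and $l_2$ form a broken ray $\nu$ containing $x$ and $y$, supplying the first clause of the theorem.

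Next I would expand the covector identity $(\hat s_2,\hat\alpha_2) = (\hat s_1,\hat\alpha_1)(d\chi)^{-1}$ using the explicit expression for $(d\chi)^{-1}$ from Section 2, producing the vector equation (9). The key observation, already flagged in the paragraph preceding the theorem, is that with $x$ held fixed $\tfrac{ds_1}{d\alpha_1} = \langle x,\tfrac{dw(\alpha_1)}{d\alpha_1}\rangle = -\langle x,v(\alpha_1)\rangle$, so by the chain rule
\[\tfrac{d\alpha_2}{d\alpha_1} = \tfrac{\partial\alpha_2}{\partial\alpha_1} - \langle x,v(\alpha_1)\rangle\tfrac{\partial\alpha_2}{\partial s_1},\qquad \tfrac{ds_2}{d\alpha_1} = \tfrac{\partial s_2}{\partial\alpha_1} - \langle x,v(\alpha_1)\rangle\tfrac{\partial s_2}{\partial s_1}.\]
Reading the ratio of the two components of (9) then gives $\langle y,v(\alpha_2)\rangle = -(d\alpha_2/d\alpha_1)^{-1}(ds_2/d\alpha_1)$, which by Proposition 1(b) is exactly the conjugacy of $x$ and $y$ along $\nu$, i.e.\ (a); the first component gives $\lambda_2 = \lambda_1(d\alpha_2/d\alpha_1)/\det(d\chi)$, i.e.\ (b). The converse direction is immediate because each step above is reversible.

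The only real obstacle I anticipate is keeping straight which expressions are partial derivatives of $\chi$ (with $(s_1,\alpha_1)$ independent) and which are total derivatives along $\nu$ (with $x$ fixed, so that $s_1 = \langle x,w(\alpha_1)\rangle$ depends on $\alpha_1$); once the chain-rule dictionary above is in place, the matching of (9) with conditions (a)--(b) is essentially mechanical. One should also check consistency of sign conventions for $\lambda_i$ on the two components of $C_R$ described in (5), and note that $\det(d\chi)\neq 0$ since $\chi$ is a diffeomorphism, so no division in the argument is ill-defined.
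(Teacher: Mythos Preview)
Your proposal is correct and follows essentially the same route as the paper: the argument there is exactly the computation presented in the paragraphs immediately preceding the theorem, culminating in equation~(\ref{yyy}) and the identification of the partial-derivative combinations with the total derivatives $\tfrac{d\alpha_2}{d\alpha_1}$, $\tfrac{ds_2}{d\alpha_1}$ when $x$ is held fixed. Your added remarks about sign consistency of $\lambda_i$ across the two components of $C_R$ and the nonvanishing of $\det(d\chi)$ are sensible sanity checks that the paper leaves implicit.
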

For $(y,\eta)$ satisfying this theorem, we call it the \textit{conjugate covectors} of $(x,\xi)$.
Since $C_1$ is a local diffeomorphism, it maps a small neighborhood of $(x,\xi)$ to a small neighborhood of $(s_1,\alpha_1,\widehat{s_1},\widehat{\alpha_1})$. 
The similar is true with $C_2$. 
Then by shrinking $V_1$ and $V_2$ a bit, we can assume $C_1(V^1) =C_2(V^2) \equiv \mathcal{V}$. 

Notice $R_1$ is elliptic at $(x,\xi)$. 
An application of the parametrix $R_1^{-1}$ to $\mathcal{B} (f_1+f_2)$ shows
$$
R_1^{-1} \mathcal{B}(f_1+f_2) = f_1 + F_{12} f_2,
$$
where we define $F_{12} = R_1^{-1} R_2$. Then $F_{12}$ is an FIO with canonical relation $C_{12} = C_1^{-1} \circ C_2: V^2 \to V^1 $.  We can also define $F_{21}$ and $C_{21}$ in a similar way.
This proves the following.
\begin{theorem}
	Suppose $f_j \in \mathcal{E}'(U_j)$ with $\text{\( \WF \)}(f_j) \subset V^j$, $j = 1,2$. Then 
	$$
	\mathcal{B} (f_1+f_2) \in H^s(\mathcal{V})
	$$
	if and only if 
	$$
	f_1 + F_{12} f_2 \in H^{s-1/2}(V^1) \Leftrightarrow
	F_{21} f_1 +  f_2 \in H^{s-1/2}(V^1).
	$$
\end{theorem}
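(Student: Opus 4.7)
My plan is to derive the equivalence from the microlocal ellipticity of $R_1$ and $R_2$ established by the preceding proposition and lemma. First I would record orders: the weighted Radon transform $R$ is an FIO of order $-1/2$, elliptic where $a\neq 0$, and the pull-back $\chi^*$ has order $0$ with a diffeomorphism canonical relation (\ref{cchi}). Composing, each $R_i$ ($i=1,2$) is an elliptic FIO of order $-1/2$ on $V^i$ whose canonical relation $C_i$ is a local diffeomorphism onto $\mathcal{V}$. In particular each $R_i$ admits a microlocal parametrix $R_i^{-1}$ of order $+1/2$, elliptic, with canonical relation $C_i^{-1}$, satisfying $R_i^{-1}R_i \equiv \mathrm{Id}$ modulo smoothing on $V^i$.

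The first equivalence follows immediately from (\ref{B12}). Applying $R_1^{-1}$ yields
$$R_1^{-1}\mathcal{B}(f_1+f_2) = f_1 + R_1^{-1}R_2 f_2 = f_1 + F_{12}f_2 \pmod{C^\infty}.$$
Since an elliptic FIO of order $m$ with diffeomorphism canonical relation is an isomorphism on microlocal Sobolev spaces shifting the regularity index by $-m$, the forward direction of $\mathcal{B}(f_1+f_2) \in H^s(\mathcal{V}) \iff f_1 + F_{12}f_2 \in H^{s-1/2}(V^1)$ is obtained by applying $R_1^{-1}$ (which costs $1/2$ derivative), and the reverse by applying $R_1$ (which gains $1/2$); the smoothing remainder from the parametrix relation is harmless at any fixed Sobolev order.

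For the second equivalence I would observe that $F_{12} = R_1^{-1}R_2$ and $F_{21}= R_2^{-1}R_1$ are order-$0$ elliptic FIOs whose canonical relations $C_{12} = C_1^{-1}\circ C_2$ and $C_{21} = C_2^{-1}\circ C_1$ are local diffeomorphisms inverse to one another. Hence $F_{12}\circ F_{21} \equiv \mathrm{Id}$ modulo smoothing, so
$$F_{12}\bigl(F_{21}f_1 + f_2\bigr) = f_1 + F_{12}f_2 \pmod{C^\infty}.$$
Because $F_{12}$ is elliptic of order $0$ with diffeomorphism canonical relation, it preserves microlocal Sobolev regularity, so the two expressions lie in the same microlocal Sobolev class, proving the equivalence.

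The argument contains no genuine obstacle; the only thing to watch is bookkeeping — all identities are microlocal on $V^1$, $V^2$, $\mathcal{V}$, with smoothing remainders that do not affect any finite Sobolev statement — and all mapping properties invoked are standard consequences of the FIO calculus for operators whose canonical relations are local diffeomorphisms.
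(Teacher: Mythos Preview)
Your proposal is correct and follows essentially the same approach as the paper: apply the microlocal parametrix $R_1^{-1}$ (respectively $R_2^{-1}$) to $\mathcal{B}(f_1+f_2)=R_1f_1+R_2f_2$ and use that the resulting $F_{12}$, $F_{21}$ are order-$0$ elliptic FIOs with mutually inverse canonical relations. The paper's argument is the short paragraph immediately preceding the theorem; your write-up simply makes explicit the order bookkeeping (the $-1/2$ order of $R$ and the Sobolev shift) and the reason the two displayed conditions are equivalent, which the paper leaves to the reader.
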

Thus, given a distribution $f_1$ singular in $ V^1$, there exists a distribution $f_2$ singular in $V^2$ such that $\mathcal{B}(f_1+f_2)$ is smooth. One possible choice is $ f_2 = - F_{12} f_1$. It is also the only choice if we consider it up to smooth functions. If we introduce the definition of the microlocal kernel as in \cite{Holman2017}, then for any $h$ with the wave front set in $V^1$, $h- F_{12}h$  is in the microlocal kernel of $\mathcal{B}$. This implies the reconstruction of $f=f_1$ always has some error in form of  $h- F_{12}h$ for some $h$. In other words, the singularities of $f$ cannot be resolved from the singularities of $\mathcal{B}(f)$ and it can only be recovered up to an error in the microlocal kernel. A more detailed description of the kernel is in \cite{Holman2017}.

With the notation above, we are going to find out the artifacts arising when we use the backprojection $\mathcal{B}^*\mathcal{B}$ to reconstruct $f$. Without loss of generality, we assume the weight $a(y,\alpha) = 1$ in the following. Suppose $\nu$ is  the broken ray in Theorem \ref{cancelofC}. In the small neighborhood of $\nu$, we have
\begin{equation}\label{normallocal}
\mathcal{B}^*\mathcal{B}f = R_1^*R_1 f_1+R_1^*R_2 f_2+R_2^*R_1 f_1+R_2^*R_2 f_2.
\end{equation}
Recall $R_1$ and $R_2$ are defined microlocally. On the one hand, the assumption on $f_i$ play the same role as restricting the operator on $U^i$, $i = 1, 2$. For simplification, we just ignore them. On the other hand, if we concentrate on the small neighborhood of $\nu$, then we exclude the broken ray that carries $(x,\xi)$ on its outgoing part. Microlocally $R_1$ is equivalent to the Radon transform operator near $(x,\xi)$, which indicates $R_1^*R_1$ is an elliptic pseudodifferential operator of order $-1$ for distributions singular near $(x,\xi)$. Especially, it has the principal symbol
$\frac{4 \pi}{|\xi|} $. The similar is true for $R_2^*R_2$.

One can follow the same argument in \cite{MR3339183,Holman2017} to show the properties of the normal operators. Additionally, similar to Radon transform, we can apply a filter to the  backprojection to get a better reconstruction. Thus, we have
$$
\mathcal{B}^* \Lambda \mathcal{ B} = R_1^* \Lambda R_1 f_1+R_1^*\Lambda R_2 f_2+R_2^* \Lambda R_1 f_1+R_2^* \Lambda R_2 f_2,
$$
where $\Lambda = \frac{1}{4\pi}\sqrt{- \Delta_s}$ is a self-adjoint operator.

The canonical relation of $R_i^*$ is the inverse of that of $R_i$, and therefore by Egorov's theorem \cite{Hoermander2009}, $R_i^* \Lambda R_i$ is a pseudodifferential operator of order $0$ with principal symbol $\sigma(R_i^*R_i) (\sigma(\Lambda)\circ g_i) $, where $\sigma(P)$ refers to the principal symbol of $P$ and $g_i$ is the canonical transformation corresponding to  $C_{R_i}$ for $i=1,2$. 
Recall Proposition \ref{Radon}, we have $\sigma(\Lambda)\circ g_i = \frac{1}{4\pi}|\xi|$, which implies 
$$
R_i^* \Lambda R_i \equiv  I \mod \Psi^{-1},\quad i = 1,2.
$$
This also coincides with the inversion formula for Radon transform. Then with the observation $ R_1^*\Lambda R_2 F_{21} = R_1^*\Lambda R_1$ and $F_{21}R_1^*\Lambda R_2 \equiv I$, we have $ R_1^*\Lambda R_2 \equiv F_{12}$ up to a lower order.
The same is true with $R_2^*\Lambda R_1$.
Notice he following calculations are all microlocal and up to order $-1$.\\
Hence, we have 
\begin{align}
\mathcal{B}^*\Lambda \mathcal{B}
\equiv
\begin{bmatrix}
\begin{array}{ll}
\text{Id} &  F_{12}\\	
F_{12}^{-1}&  \text{Id}
\end{array}
\end{bmatrix}
\coloneqq M ,
\end{align}
where we follow the convention in \cite{Holman2017} to think $f = f_1 + f_2$ as vector functions. This implies when performing the filtered backprojection, the reconstruction has two parts of artifacts, $F_{12} f_2$ in $V^1$ and $F_{21} f_1$ in $V^2$. In \cite{Holman2017}, it is also shown that $F_{12}$ and $F_{21}$ are principally unitary in $H^{-\frac{1}{2}}$, and the artifacts have the same strength.

Next, consider the numerical reconstruction by using the Landweber iteration. 
We still focus on the local problem, that is, we consider $\mathcal{B}f$ in the small neighborhood $\Gamma(\nu_0)$ of fixed $\nu_0$. 
With the notation above, we use a slightly different Landweber iteration to solve the equation $\mathcal{B} f = g$, with $g$ being the local data and in the range of $\mathcal{B}$. 
Here, we set $\mathcal{L} = \Lambda^{\frac{1}{2} }\mathcal{B}$ to have
\begin{equation}\label{landweber}
(\text{Id} - (\text{Id}- \gamma \mathcal{L}^* \mathcal{L})) f = \gamma \mathcal{L}^*\Lambda^{\frac{1}{2} } g.
\end{equation}
Then with a small enough and suitable $\gamma > 0$, it can be solved by the Neumann series
$$
f = \sum_{k=0}^{+\infty}(\text{Id}- \gamma \mathcal{L}^*\mathcal{L})^k 
\gamma \mathcal{L}^*\Lambda^{\frac{1}{2} } g.
$$
Suppose the original function is $f = f_1 + f_2$. We track the terms of highest order, that is, order zero, to have the approximation sequence 
$$
f^{(n)} = \sum_{k=0}^{n}(\text{Id}- \gamma M)^k \gamma M f.
$$
With the observation $M^k = 2^{k-1}M$ for $k \geq 1$, a straightforward calculation shows 
$$
f^{(n)} = \gamma (\sum_{k=0}^{n} (1-2\gamma)^k) Mf.
$$
The numerical solution is
$$
f^{(n)} \to
\frac{1}{2} 
\begin{bmatrix}
\begin{array}{l}
f_1 + F_{12}f_2\\	
F_{21}f_1 +  f_2
\end{array}
\end{bmatrix},\ \text{as} \ n \to \infty.
$$
Therefore, the error equals to $\frac{1}{2}(f_1 - F_{12}f_2) + \frac{1}{2}(f_2 - F_{21}f_1)$, which belongs to the microlocal kernel.
\section{The V-line Radon transform}\label{reflection_section}
In this section we are going to apply the conclusions to the V-line Radon transform, that is, the case when $\chi$ is a reflection and obeys the law in geometric optics. We consider the situation when the weight $a(y,\alpha) = 1$. First we verify the reflection operator is a diffeomorphism. Then it is followed by the potential cancellation of singularities due to the existence of conjugates points. We derive an explicit formula to illustrate when conjugate points exist in this case.
\subsection{The Diffeomorphism}
Suppose $\Omega$ is a bounded domain with a smooth and negatively oriented boundary, which can be parameterized as a regular curve $\gamma$. This allows us to choose its arc length parameterization:
${\gamma}(\tau) = (x(\tau),y(\tau))$.
The unit tangent vector and unit outward normal are
$
\dot{\gamma}(\tau) = (\dot{x}(\tau),\dot{y}(\tau))$ and ${n}(\tau) = (-\dot{y}(\tau),\dot{x}(\tau))
$ respectively,
where $\dot{f}(\tau)$ refers to $\frac{d f}{d \tau}$. We consider the local problems, and $\gamma$ could be just one part of $\partial \Omega$.
Since $\gamma(\tau)$ is unit speed, the signed curvature of $\gamma$ is defined as the scalar function $\kappa(\tau)$ such that $\ddot{\gamma} =  \kappa(\tau) n $. Additionally, we have $\dot{n} =  -\kappa(\tau) \dot{\gamma}$, which will be used later.

Suppose a ray $(s,\alpha)$ transversally hits $\partial \Omega $ at point $\gamma(\tau_0)=(x(\tau_0),y(\tau_0)) $ and then reflects, as is shown in Figure \ref{picref}.
In a small neighborhood of such a fixed ray, $\tau_0$ is a smooth function of $s$ and $\alpha$ . The proof is simply an application of implicit function theorem.
Since $\gamma(\tau_0)$ satisfies $F(\tau_0,s,\alpha) =\langle{w(\alpha)},\gamma(\tau_0) \rangle - s = 0 $ and we have $\frac{\partial F}{\partial \tau_0} = \langle w(\alpha), \dot{\gamma}(\tau_0)\rangle \neq 0$, $\tau_0$ could be written as a smooth function $\tau_0(s,\alpha)$.

\begin{figure}[h]
	\includegraphics[height=0.2\textwidth]{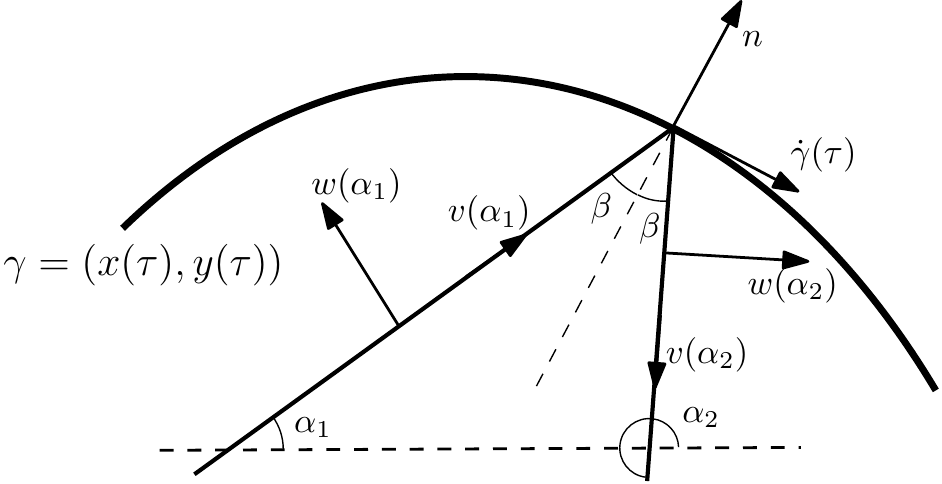}
	\caption{A sketch of a broken ray reflected on a smooth boundary and the notation.}
	\label{picref}
\end{figure}

Differentiating $F(\tau_0,s,\alpha)=0$ with respect to $s$ and ${\alpha}$, 
we get an equation of $\frac{\partial \tau_0}{\partial s}$ and $\frac{\partial \tau_0}{\partial \alpha}$. To distinguish $(s,\alpha)$ from the one we use for $l_{2}$, we replace them by $(s_1,\alpha_1)$ in the following
\begin{equation}\label{tauequ}
\frac{\partial \tau_0}{\partial s_1}  = \frac{1}{\langle {w(\alpha_1)}, \dot{\gamma}\rangle} \equiv k_s, 
\qquad 
\frac{\partial \tau_0}{\partial \alpha_1}  = \frac{\langle {v(\alpha_1)},\gamma(\tau_0)\rangle}{\langle {w(\alpha_1)},\dot{\gamma}\rangle} \equiv {k}_{\alpha}.
\end{equation}

\begin{cl*}
	The map $\chi : (s_1,{\alpha_1})\mapsto (s_2,\alpha_2) $ is a local diffeomorphism.
\end{cl*}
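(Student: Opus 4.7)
The plan is to prove the claim by establishing two ingredients: $\chi$ is smooth, and $d\chi$ is invertible at the reference reflected ray, so that the inverse function theorem applies.

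Smoothness is largely already in hand. The implicit function theorem application carried out just before equation (\ref{tauequ}) gives $\tau_0$ as a smooth function of $(s_1,\alpha_1)$ in a neighborhood of the reference ray. Writing $\dot\gamma(\tau_0)=v(\theta(\tau_0))$ for a smooth tangent angle $\theta(\tau_0)$, the law of reflection decomposes $v(\alpha_1)$ into tangential and normal components at $\gamma(\tau_0)$ and flips the normal one, yielding in closed form
\[
\alpha_2 \;=\; 2\theta(\tau_0)-\alpha_1 \pmod{2\pi},
\]
while $s_2=\langle w(\alpha_2),\gamma(\tau_0)\rangle$ by definition. Thus both components of $\chi$ are compositions of smooth functions of $(s_1,\alpha_1)$, so $\chi$ is smooth.

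For invertibility of $d\chi$, the cleanest route is an involution argument. Let $\sigma(s,\alpha)=(-s,\alpha+\pi)$ denote the orientation-reversal on directed lines. Reversing the orientation of the outgoing ray, reflecting it off $\gamma$ by the same law, and reversing orientation again returns the incoming ray; that is, $\sigma\circ\chi\circ\sigma\circ\chi=\mathrm{Id}$ in a neighborhood of the reference ray. Since $\sigma$ is itself a diffeomorphism, this provides a smooth local inverse for $\chi$, so $\chi$ is a local diffeomorphism. Alternatively, one can compute $d\chi$ entrywise: using $\dot\theta=\kappa$ together with (\ref{tauequ}) one gets
\[
\tfrac{\partial\alpha_2}{\partial s_1}=2\kappa k_s,\qquad \tfrac{\partial\alpha_2}{\partial\alpha_1}=2\kappa k_\alpha-1,
\]
and the partials of $s_2$ follow by differentiating $s_2=\langle w(\alpha_2),\gamma(\tau_0)\rangle$ through both $\alpha_2$ and $\tau_0$. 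After simplification, using the reflection identity $\langle v(\alpha_2),\dot\gamma\rangle=\langle v(\alpha_1),\dot\gamma\rangle$ and $\langle w(\alpha),v(\alpha)\rangle=0$, the determinant reduces to a nonzero constant (in fact $\pm 1$, reflecting invariance of the symplectic form $ds\wedge d\alpha$ under billiard reflection).

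The conceptual content is small; the main obstacle is bookkeeping with sign conventions so that $v(\alpha_2)$ points away from the boundary rather than into it, and so that the parameters $(s_2,\alpha_2)$ match the chosen orientation on directed lines. The involution viewpoint bypasses this by defining both $\sigma$ and $\chi$ intrinsically, so I would present the proof along those lines and relegate the explicit Jacobian computation to a remark when it is needed later for the microlocal analysis.
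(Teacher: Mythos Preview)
Your proposal is correct. The paper, however, takes the direct route you list as the alternative: it writes $\alpha_2=\alpha_1+2\beta+\pi$ with $\sin\beta=\langle v(\alpha_1),\dot\gamma(\tau_0)\rangle$, differentiates to obtain $\partial\alpha_2/\partial s_1=2\kappa k_s$ and $\partial\alpha_2/\partial\alpha_1=2\kappa k_\alpha-1$ (matching your formulas, since your tangent-angle parametrization $\alpha_2=2\theta(\tau_0)-\alpha_1$ is equivalent), computes the partials of $s_2$ by differentiating $s_2=\langle\gamma(\tau_0),w(\alpha_2)\rangle$, and then row-reduces to get $\det(d\chi)=1$ exactly.

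Your primary argument via the billiard involution $\sigma\circ\chi\circ\sigma\circ\chi=\mathrm{Id}$ is a genuinely different and cleaner route to the Claim itself: it exploits the time-reversibility of specular reflection and sidesteps all the sign bookkeeping you correctly flag as the main nuisance. The tradeoff is that the involution identity only yields $(\det d\chi)^2=1$, while the paper's explicit calculation pins down $\det(d\chi)=1$ on the nose. That exact value is not needed for the Claim, but it \emph{is} used downstream---it appears in the conjugate-covector relation of Theorem~\ref{cancelofC}(b) and, in the reflection case specifically, it is what makes Corollary~\ref{conjugatecor} come out as cleanly as it does. So the paper's computation is doing double duty, and your instinct to retain the explicit Jacobian as a remark for later use is exactly right.
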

\begin{proof}
	As is shown in Figure \ref{picref}, the reflection follows the rules:
	\begin{equation}\label{refangle}
	\begin{cases}
	{\alpha_2} = \alpha_1 + 2 \beta + \pi\\
	{s_2} = \langle \gamma(\tau_0) , {w(\alpha_2)}\rangle
	\end{cases}
	\end{equation}
	where $\beta \in (-\frac{\pi}{2},\frac{\pi}{2})$ is the incident angle and $\beta <0$ represents that $v(\alpha_1)$ has negative projection along 
	$\dot{\gamma}$.
	
	Since $\sin \beta = \langle v(\alpha_1), \dot{\gamma}(\tau_0)\rangle$,  $\beta$ is a smooth function of $s_1$ and $\alpha_1$, which has the derivative
	$$\label{beta}
	\frac{\partial {\beta}}{\partial s} = \kappa k_s, \qquad \frac{\partial {\beta}}{\partial \alpha_1} = \kappa k_\alpha -1,
	$$
	where $\kappa$ is the signed curvature.
	This is followed by  
	\begin{equation}\label{t_alph}
	\frac{\partial {\alpha_2}}{\partial s_1} = 2 \kappa k_s,
	\qquad \frac{\partial {\alpha_2}}{\partial \alpha_1} = 2\kappa k_\alpha -1.
	\end{equation}
	And 
	\begin{align*}
	&\frac{\partial {s_2}}{\partial {s_1}} = 
	\langle \frac{\partial {w(\alpha_2)}}{\partial s_1},\gamma(\tau_0) \rangle+\langle {w(\alpha_2)}, \frac{\partial \gamma(\tau_0)}{\partial s_1}\rangle 
	= - \langle {v(\alpha_2)},\gamma(\tau_0) \rangle \frac{\partial {\alpha_2}}{\partial s_1} + k_s \langle {w(\alpha_2)}, \dot{\gamma} \rangle,
	\\
	&\frac{\partial {s_2}}{\partial {\alpha_1}} = 
	\langle \frac{\partial {w(\alpha_2)}}{\partial \alpha_1},\gamma(\tau_0) \rangle+\langle {w(\alpha_2)}, \frac{\partial \gamma(\tau_0)}{\partial \alpha_1}\rangle
	= - \langle {v(\alpha_2)},\gamma(\tau_0) \rangle \frac{\partial {\alpha_2}}{\partial \alpha_1} + k_\alpha \langle {w(\alpha_2)}, \dot{\gamma}\rangle.\\
	\end{align*}
	
	By row reduction, we have
	\[
	\det{(d\chi)} =\det
	\begin{bmatrix}
	\frac{\partial {s_2}}{\partial {s_1}}  & \frac{\partial {s_2}}{\partial \alpha_1}\\[1.3ex]
	\frac{\partial {\alpha_2}}{\partial {s_1}} & \frac{\partial {\alpha_2}}{\partial \alpha_1} \\
	\end{bmatrix}
	=\det
	\begin{bmatrix}
	k_s \langle {w(\alpha_2)}, \dot{\gamma} \rangle  & k_\alpha \langle {w(\alpha_2)}, \dot{\gamma} \rangle\\[1.3ex]
	\frac{\partial {\alpha_2}}{\partial {s_1}} & \frac{\partial {\alpha_2}}{\partial \alpha_1} \\
	\end{bmatrix}.
	\]
	Thus,
	\begin{align*}
	\det{(d\chi)} 
	&=\langle {w(\alpha_2)}, \dot{\gamma} \rangle \det
	\begin{bmatrix}
	k_s   & k_\alpha \\[1.3ex]
	2\kappa k_s & 2\kappa k_\alpha -1 \\
	\end{bmatrix}
	= -\langle w(\alpha_1), \dot{\gamma} \rangle (-k_s)  = 1.
	\end{align*}
	When $l_1$ hits the boundary transversally, the differential of $\chi$ is nonzero and the reflection map $\chi$ is a local diffeomorphism.
\end{proof}

\subsection{Conjugate Points}
The incoming ray ${l_{1}}(t)$ and reflected ray ${l_{2}}(t) $ are given in the following
$$
\begin{cases}
{l_{1}}(t) = {p} + t {v(\alpha_1)} , \quad  & 0 \leq t \leq t_1,\\
{l_{2}}(t) = {\gamma(\tau_0)} + (t-t_1){{v(\alpha_2)}}, \quad & t\geq t_1.
\end{cases}
$$
where $q_0 = \gamma(\tau_0)$ is the intersection point on the boundary. Compared with (\ref{exponential}), now $q_0$ connects $l_1$ and $l_2$ and $t_1 = t_2$. We use $t_1$ instead of $t_2$ in the following.
By equation 
(\ref{tauequ})(\ref{t_alph}), a straightforward calculation shows
$$
\frac{d {{\alpha_2}}}{d \alpha_1} =  \frac{2\kappa  t_1}{\langle w(\alpha_1), \dot{\gamma} \rangle} -1 
,\quad \frac{d {{q_0}}}{d \alpha_1} =  \frac{t_1}{\langle w(\alpha_1), \dot{\gamma} \rangle}\dot{\gamma}
.
$$
where $t_1 = \langle q_0-p,v(\alpha_1)\rangle$ is the time or length from $p$ to $q_0$.
Plugging these back into (\ref{det}), we get the determinant is $(t-t_1) \big( \frac{d {{\alpha_2}}}{d \alpha_1} \big)-t_1$. Especially, the matrix is in the following,
\begin{align}\label{expmatrix}
\begin{split}
d \text{\( \expo \)}_{{p}}(t {v_1})=
\left[
\begin{array}{cc}
v(\alpha_2), & ((t-t_1) \big( \frac{d {{\alpha_2}}}{d \alpha_1} \big)-t_1)w(\alpha_2)
\end{array}
\right].
\end{split}
\end{align}
\begin{corollary}\label{conjugatecor}
	Suppose an incoming ray $l_1$ hits the boundary $\gamma$ transversally at point $\gamma(\tau_0)$ and then reflects. Then 
	\begin{itemize}
		\item[(a)] $p$ on $l_1$ has a conjugate point $q$ in $l_2$ if and only if  $\frac{d \alpha_2 }{d \alpha_1} > 0$, 
		specifically, if and only if it satisfies $\kappa(\tau_0) < \frac{\langle w(\alpha_1), \dot{\gamma}(\tau_0)\rangle}{2t_1}$.
		\item[(b)]If this happens, $q$ is uniquely determined by $\Delta t_2 =  \big( \frac{d {{\alpha_2}}}{d \alpha_1} \big)^{-1} \Delta t_1$, where$\Delta t_1 = t_1$ is the time or length from $p$ to $\gamma(\tau_0)$ and $\Delta t_2 = \langle q-\gamma(\tau_0),v(\alpha_2)\rangle$ is the time or length from $q$ to $\gamma(\tau_0)$.
	\end{itemize}
\end{corollary}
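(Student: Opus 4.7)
The plan is to apply Proposition~\ref{conjugatepp} with the connecting point taken as $q_0=\gamma(\tau_0)$. Since the derivatives $\frac{d\alpha_2}{d\alpha_1}$ and $\frac{dq_0}{d\alpha_1}$ have already been put in closed form just above the corollary, only a short trigonometric reduction and a careful sign check remain to turn the abstract conditions (a) and (b) of the proposition into the stated geometric form.

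First I would compute the sign of $\bigl\langle \frac{dq_0}{d\alpha_1}, w(\alpha_2)\bigr\rangle$, the quantity entering Proposition~\ref{conjugatepp}(a). Since $\frac{dq_0}{d\alpha_1}$ is proportional to $\dot\gamma$, this amounts to evaluating $\langle \dot\gamma, w(\alpha_2)\rangle$. Using the reflection identity~(\ref{refangle}) and $\sin\beta = \langle v(\alpha_1),\dot\gamma\rangle$, a direct trigonometric computation yields $\langle \dot\gamma, w(\alpha_2)\rangle = \cos\beta$ and $\langle \dot\gamma, w(\alpha_1)\rangle = -\cos\beta$, so that
\[
\Bigl\langle \frac{dq_0}{d\alpha_1}, w(\alpha_2)\Bigr\rangle = \frac{t_1\cos\beta}{-\cos\beta} = -t_1 < 0.
\]
Because this expression is forced to be negative, the product condition in Proposition~\ref{conjugatepp}(a) collapses to the single inequality $\frac{d\alpha_2}{d\alpha_1}>0$. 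Plugging in $\frac{d\alpha_2}{d\alpha_1} = \frac{2\kappa t_1}{\langle w(\alpha_1),\dot\gamma\rangle}-1$ and clearing the negative denominator $\langle w(\alpha_1),\dot\gamma\rangle = -\cos\beta$ (which flips the inequality) produces $\kappa(\tau_0) < \frac{\langle w(\alpha_1),\dot\gamma(\tau_0)\rangle}{2t_1}$, the stated form of (a).

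For part (b), rather than invoking Proposition~\ref{conjugatepp}(b) and unwinding $\langle q, v(\alpha_2)\rangle$, I would read the conclusion directly off the Jacobi matrix~(\ref{expmatrix}) displayed immediately above the corollary. Its determinant is $(t-t_1)\frac{d\alpha_2}{d\alpha_1} - t_1$, which vanishes precisely at $\Delta t_2 = t - t_1 = \bigl(\frac{d\alpha_2}{d\alpha_1}\bigr)^{-1} t_1$. Since $\Delta t_1 = t_1$, this is exactly the claimed identity.

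The only real obstacle I anticipate is keeping signs straight in the reflection identity $\langle \dot\gamma, w(\alpha_2)\rangle = -\langle \dot\gamma, w(\alpha_1)\rangle$: a misplaced sign there would send the curvature condition in (a) to its opposite. Everything else is routine substitution into formulas already derived in the preceding pages.
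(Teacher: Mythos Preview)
Your proposal is correct and follows essentially the same route as the paper. The paper's argument is terser: it simply asserts that $\langle \frac{dq_0}{d\alpha_1}, w(\alpha_2)\rangle$ is negative ``as shown in Figure~\ref{picref}'' (though in fact this value $-t_1$ is already implicit in the computation of the determinant $(t-t_1)\tfrac{d\alpha_2}{d\alpha_1}-t_1$ just above the corollary), whereas you verify the reflection identity $\langle \dot\gamma, w(\alpha_2)\rangle = -\langle \dot\gamma, w(\alpha_1)\rangle = \cos\beta$ by an explicit trigonometric reduction; both arrive at the same sign and then deduce (a) and (b) exactly as you describe.
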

The statement (a) comes from the observation that the other factor $\langle \frac{d {q_0}}{d \alpha_1},  w(\alpha_2) \rangle$ in Proposition \ref{conjugatepp}(a) is always negative in the reflection case, as shown in Figure \ref{picref}.
\begin{figure}[h]
	\includegraphics[height=0.2\textwidth]{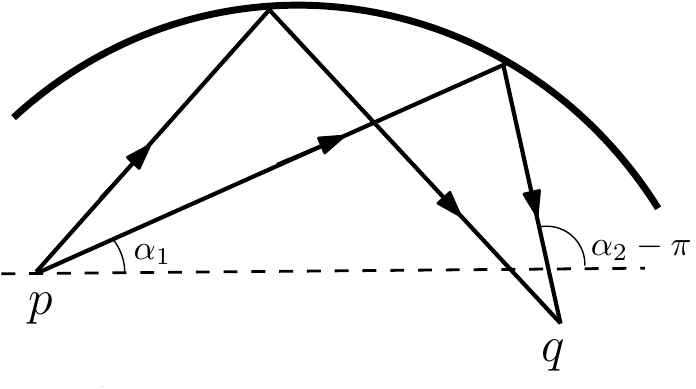}
	\caption{Two broken rays intersect when $\alpha_2$ increases as $\alpha_1$ increases.}
	\label{two}
\end{figure} 
This statement has a straightforward geometrical explanation, see Figure \ref{two}. There are conjugate points if and only if $\alpha_2$ increases as $\alpha_1$ increases.
 
For negatively oriented smooth curve which is the boundary of a convex set,  the curvature $\kappa < 0 $ and the inner product  $\langle w(\alpha_1), \dot{\gamma}\rangle < 0$.  The inequality actually says
$|\kappa(\tau_0)| > \frac{|\langle w(\alpha_2), \dot{\gamma}\rangle|}{2t_1}$. 
Additionally, 
since $\langle w(\alpha_1), \dot{\gamma}\rangle = - \cos \beta$ where $\beta$ is the incident and the reflected angle, each component involved in the criterion is geometrical and therefore is invariant regardless of what kind of parameterization we choose for the boundary.
We should mention the equation in (b) coincides with the Generalized Mirror Equation in \cite{Boyle2015} but is in different form and is derived from the perspective of the exponential map.

\begin{eg}
	Consider a parabolic mirror $-4ay = x^2$, which has the focus at $(0,-a)$. Suppose there is a light source located at the point $p=(0,-d)$. Here $a$ and $d$ are positive constants we are going to choose later. We want to know in which directions of the light from $p$ there are conjugate points and this will verify the criterion of conjugate points.
	
	Let $\gamma(x) = (x,-\frac{x^2}{4 a})$ be the boundary curve. The intersection point is $q_0= \gamma(x_0)$. Then the incoming ray has the direction along $\overrightarrow{pq_0}$, and $w(\alpha_1), \dot{\gamma}(x_0), \kappa(x_0), t_1$ can be calculated directly by definition.

	After simplification, the criterion is equivalent to
	$$
	(a-d)(\frac{3}{4}x_0^2 - ad) >0.
	$$
	We have the following three cases.\\
	\textit{case 1:} If $d > a$,  $p$ has conjugate points if and only if the incoming ray hits the boundary at the region $x^2 < \frac{4}{3}ad$, as is shown in Figure \ref{parabolic}(a).  \\
	\textit{case 2:} If $d < a$,  $p$ has conjugate points if and only if the incoming ray hits the boundary at the region $x^2 > \frac{4}{3}ad$, as is shown in Figure \ref{parabolic}(b).\\
	\textit{case 3:} If $d = a$,  $p$ has no conjugate points for all directions, which coincides with the fact that all rays of light emitting from the focus reflect and travel parallel to the y axis, as is shown in Figure \ref{parabolic}(c).\\
	\begin{figure}[h]
		\centering
		\begin{subfigure}{0.3\textwidth}
			\centering
			\includegraphics[width=\linewidth]{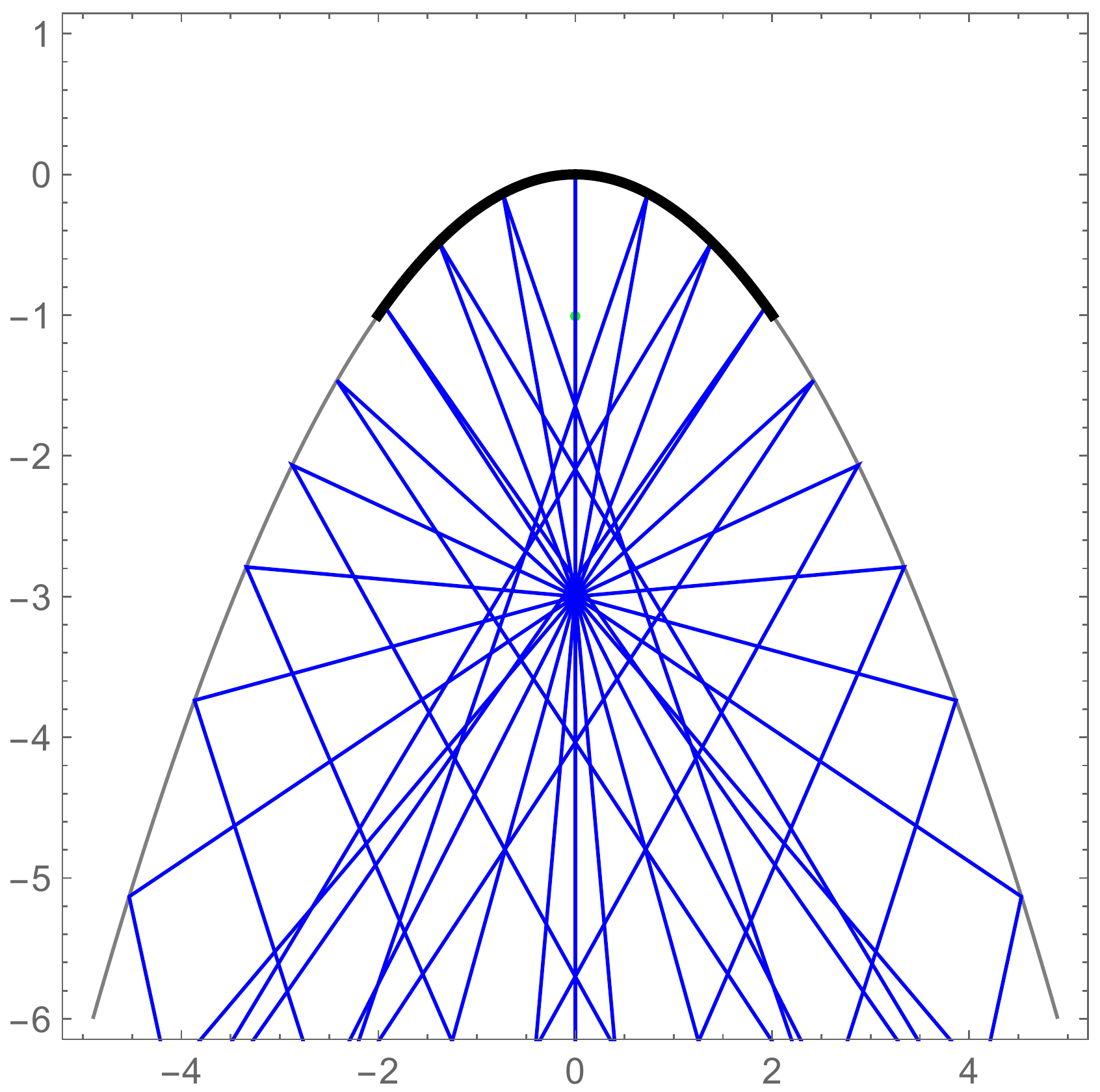}
			\caption{$a=1, d=3$} 
		\end{subfigure}%
		\begin{subfigure}{0.3\textwidth}
			\centering
			\includegraphics[width=\linewidth]{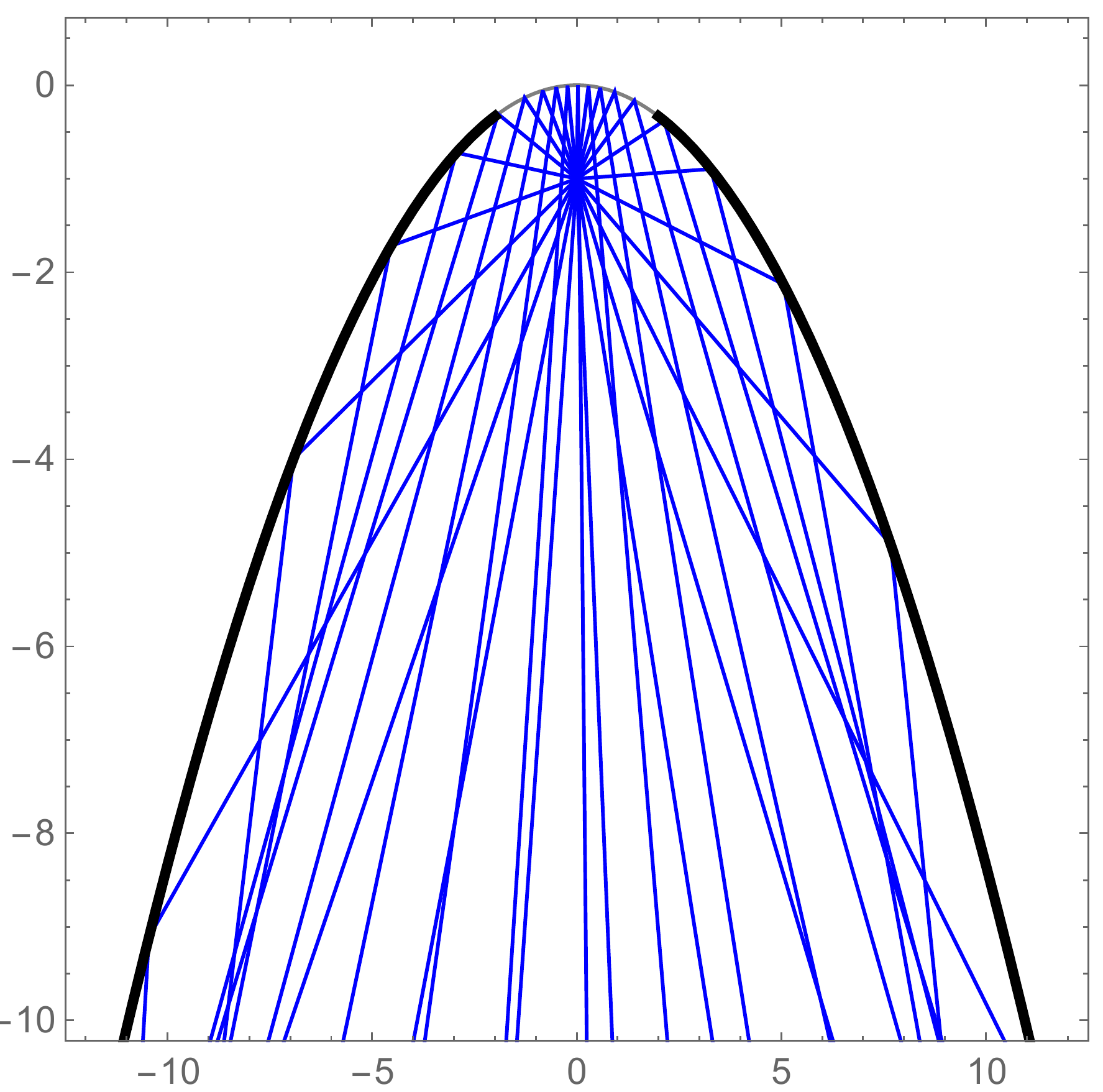}
			\caption{$a=3, d=1$}
		\end{subfigure}
		\begin{subfigure}{0.3\textwidth}
			\centering
			\includegraphics[width=\linewidth]{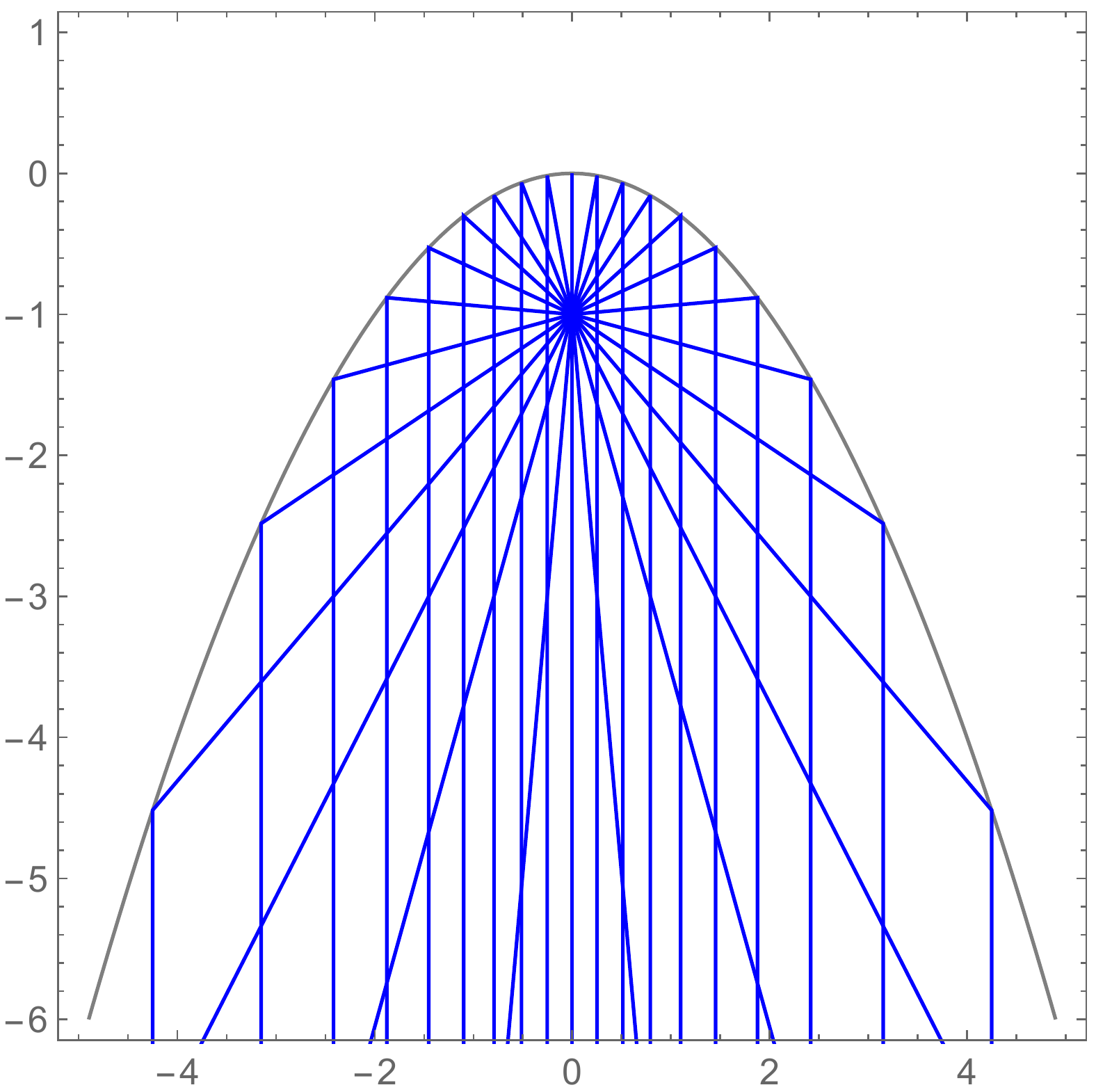}
			\caption{$a=1$, $d=1$}
		\end{subfigure}
		\caption{In (a) and (b), the bold part is the intersection region where the incoming rays hit there and reflect with conjugate points. }
		\label{parabolic}
	\end{figure}
	
\end{eg}

\begin{eg}\label{egdisk0}
	The second example is to illustrate that we have different type of conjugate points, specifically fold and cusps, if we have a circular mirror with the light source inside.
	For simplification, we use $\alpha$ to replace $\alpha_1$ in this example.
	We follow the notations in the paper \cite{MR3339183}. The \textit{tangent conjugate locus} $S(p)$ is the set of all vectors $V$ such that the differential of the exponential mapat $p$ is not a isomorphism. The kernel of $d \text{\( \expo \)}_{{p}}(V)$ is denoted by $N_p(V)$.
	By the previous results, 
	\begin{equation*}
	S(p) = \{V = t(\cos\alpha, \sin\alpha), \text{s.t.}  \ F(\alpha, t) = (\frac{2\kappa  t_1}{\langle w(\alpha), \dot{\gamma} \rangle} -1 )(t-t_1) - t_1 =0\}
	\end{equation*}
	We fix $V_0 \in S(p)$. By (\ref{expmatrix}), the differential $d \text{\( \expo \)}_{{p}}(V_0)$ has the matrix form $ [v(\alpha_2),0]$, which shows $N_p(V_0)$ is spanned by $\frac{\partial }{\partial \alpha}$. The conjugate vector $V_0$ is called \textit{of fold type}, if $\frac{\partial F}{\partial \alpha} \neq 0$ for all $(\alpha,t)$ that satisfies $F(\alpha,t) = 0$. Otherwise, we may have cusps. We will show in the following that the cusps exist in some cases.
	
	We assume the origin $O$ is at the center of the circle and the source is not there.  Suppose the mirror has radius $1$, then $\kappa = -1 $. The tangent conjugate locus is the zero level set of $F(\alpha, t)$ given by 
	\begin{equation*}
	F = (\frac{2 t_1}{\cos \beta}-1) t-2 \frac{t_1 ^ 2}{\cos \beta}.
	\end{equation*} 
	Recall $t_1$ and $\cos \beta$ are smooth functions of $\alpha$. A straightforward calculation shows
	\begin{equation*}
	\frac{\partial F}{\partial \alpha} = \frac{6t_1^2 \sin \beta (t_1 - \cos \beta )}{\cos ^2 \beta(2 t_1 - \cos \beta)}.
	\end{equation*}
	
Suppose we have conjugate points, then $2 t_1 - \cos \beta >0$. The incidence angle $\beta \in (-\frac{\pi}{2},\frac{\pi}{2})$ so we at most have two zeros for $\frac{\partial F}{\partial \alpha}$,
\begin{itemize}
	\item $\beta = 0$, which means the incoming ray and reflected ray coincide. This is a simple zero, because $\frac{d }{d \alpha} \sin \beta= t_1 - \cos \beta = t_1 - 1 \neq 0$. 
	\item $\cos \beta -t_1 = 0$ is true for some $\alpha_0$. This happens when $pO$ is perpendicular to the incoming ray. We check $\frac{d }{d \alpha}(\cos \beta -t_1) = \sin \beta \neq 0$. This is also a simple zero. 
\end{itemize}
	
\end{eg}

\subsection{Numerical Examples}
This subsection aims to illustrate the artifacts arising in the reconstruction of V-line Radon transform by numerical experiments.
We use $\Gamma$ to denote the smooth family of all broken rays chosen for tomography. 
We say $(x,\xi)$ is \textit{visible} if there is a broken ray $\gamma$ in the family of tomography such that $(x,\xi)$ is in the conormal bundle of $\gamma$ excluding the connecting part. The fact that $(x,\xi)$ is visible does not necessarily imply that $(x,\xi)$ is recoverable.
\begin{eg}\label{egdisk}
	In this example, we use filtered backprojection to recover $f$, which usually serves as the first attempt of reconstruction. We choose the domain as a disk with radius $R$ and suppose the boundary is negatively oriented. 
	The set of tomography $\Gamma$ contains all broken rays whose incoming part has positive projection onto the tangent of the boundary.
	We choose $f_1$ to be a Gaussian concentrated near a single point, as an approximation of a delta function and $f_2$ to be zero. 
	The support of $f = f_1+f_2$ is in this disk. 
	
	\begin{figure}[h]
		\centering
		\begin{subfigure}{0.33\textwidth}
			\centering
			\includegraphics[width=0.9\linewidth]{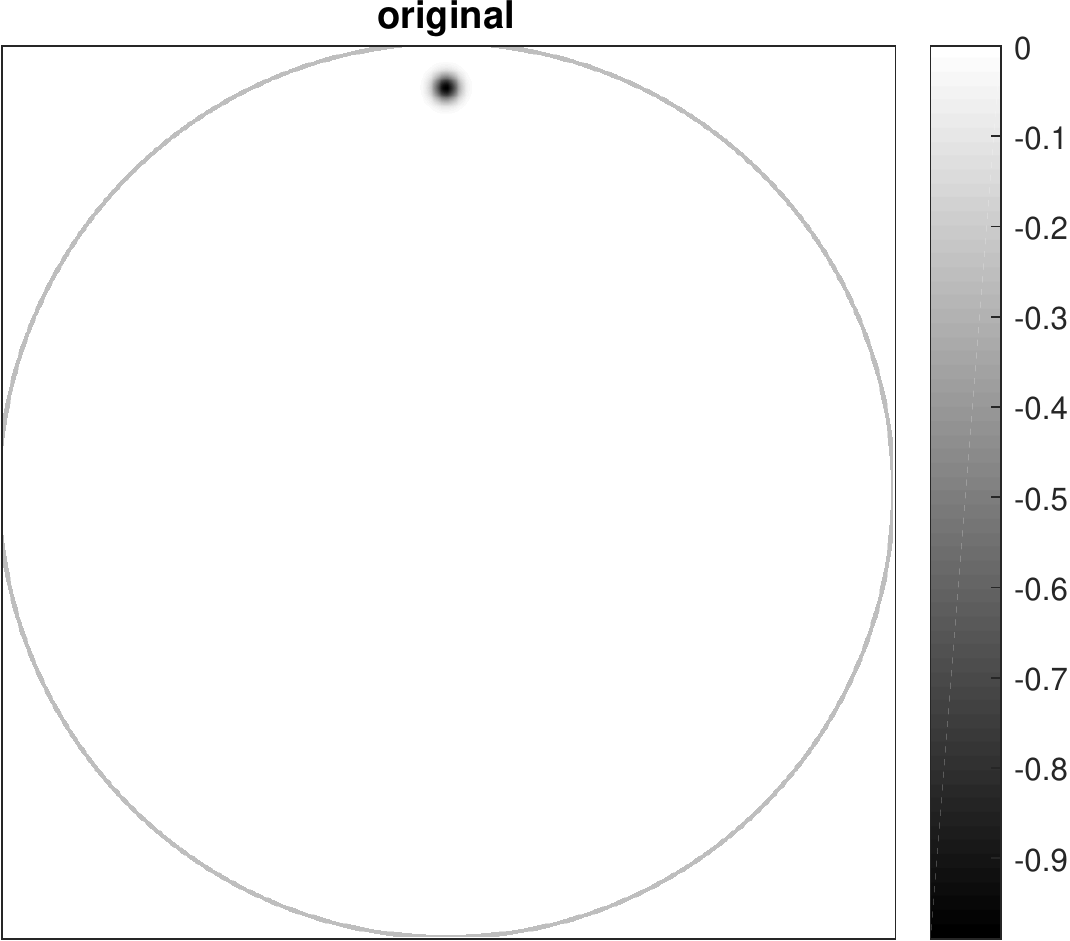}
		\end{subfigure}%
		\begin{subfigure}{0.33\textwidth}
			\centering
			\includegraphics[width=0.9\linewidth]{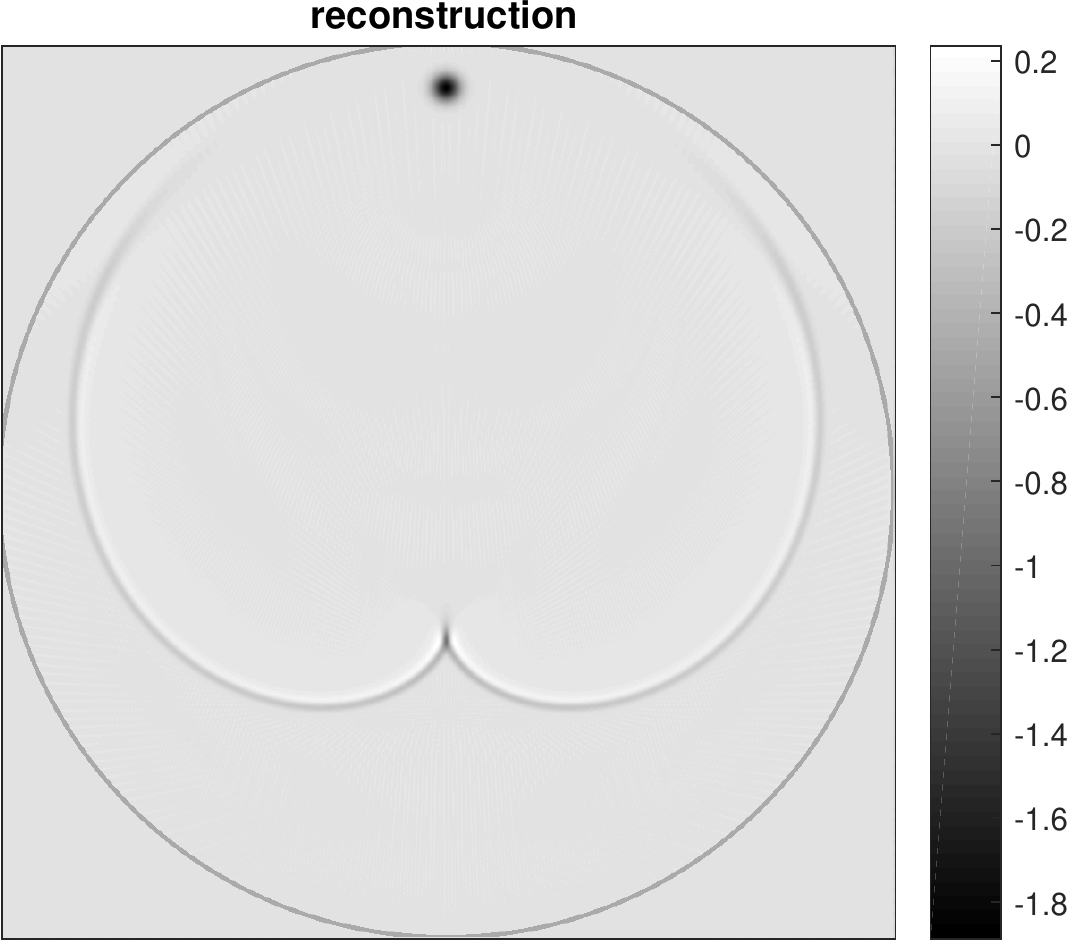}
		\end{subfigure}
		\begin{subfigure}{0.33\textwidth}
			\centering
			\includegraphics[width=0.83\linewidth,angle=90]{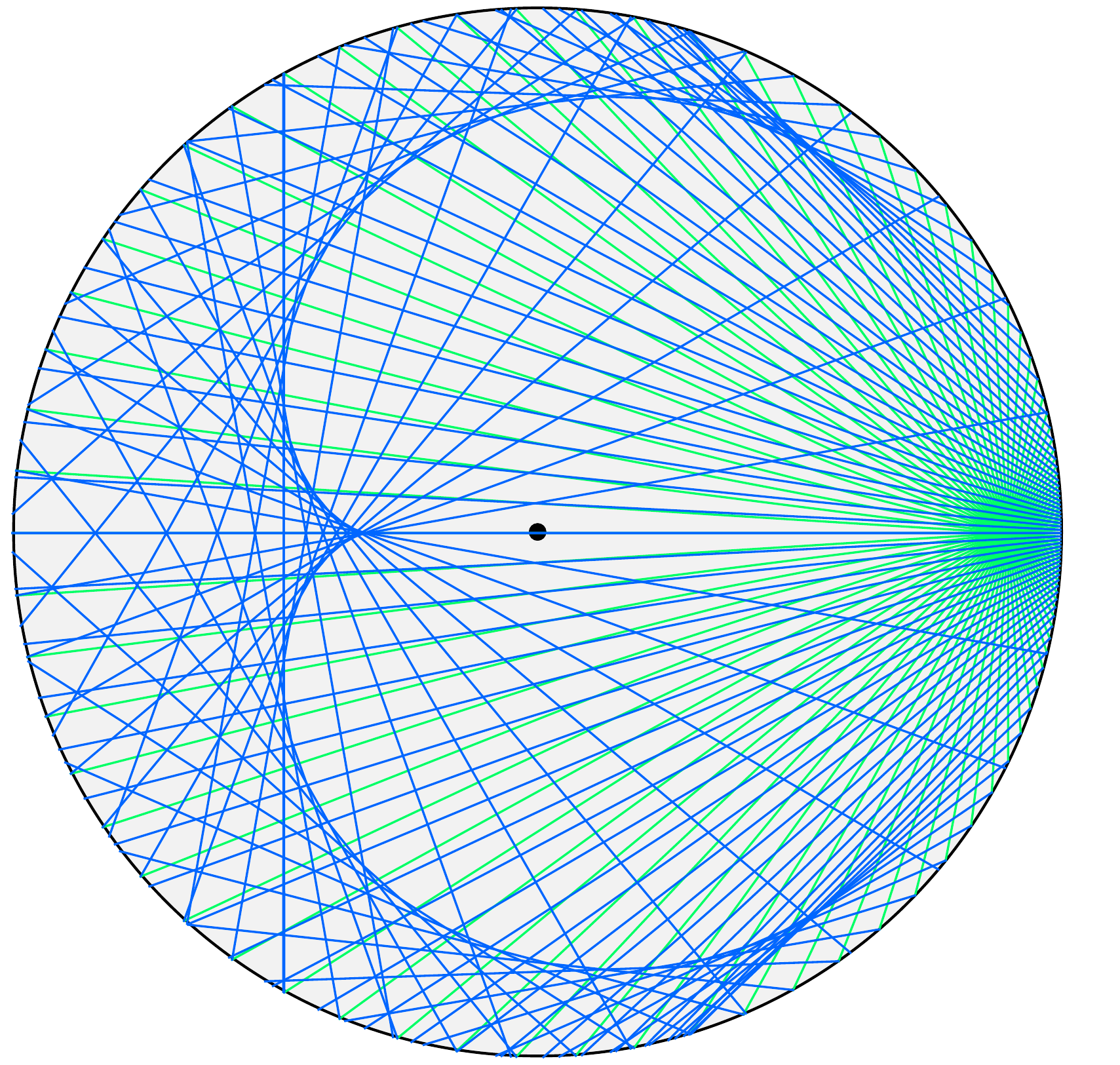}
		\end{subfigure}
		\caption{Artifacts and caustics. Form left to right: $f$, $ \mathcal{B}^*\Lambda \mathcal{B} f$, and caustics caused by reflected light.  }
		\label{causticscompare}
	\end{figure}
	
	In the code, $\mathcal{B}f$ is parameterized in the coordinate $(x_p, \alpha) \in [-R, R]\times[0,2\pi)$. Here $(x_p, \alpha)$ refers to the incoming part of a broken ray and we use it to represent the broken ray. 
	This parameterization follows the convention in Radon transform in MATLAB. 
	The radial coordinate $x_p$ is the value along the $x'$-axis, which is oriented at $\alpha$ degree counterclockwise from the $x$-axis. 
	We use the function \textit{radon} to numerically construct our operator $\mathcal{B}$ by the following formula
	$$
	\mathcal{B}f(x_p,\alpha) = Rf(x_p,\alpha)  + Rf(x_p',\alpha'),
	$$
	where $(x_p',\alpha')$ is given by the reflection. 
	Since numerically $Rf$ is known on discrete values of $(x_p,\alpha)$, we use interpolation methods to approximate $Rf(x_p',\alpha')$. Similarly, $\mathcal{B}^*$ is numerically constructed by the function \textit{iradon} and interpolation methods. 
	To better recover $f$, we apply the filter $\Lambda$ to the data before applying the adjoint operator. The plots are shown in the Figure \ref{causticscompare}. We can clearly see the artifacts appear exactly in the location of conjugate points, compared \ with the caustics caused by a light source. Furthermore, they are expained by equation (\ref{normallocal}). 
	
\end{eg}

\begin{eg}
	This example is to illustrate the reconstruction from local data by Landweber iteration. Assume for each $(x,\xi)$ in $\text{\( \WF \)}(f)$, it is visible and is perceived by only one broken ray. Then it has at most one conjugate point. 
	To make it true, we use part of the circle as the reflection boundary. The tomography family $\Gamma$ is the set of all broken rays which comes from the left side with vertices on the boundary. 
	
	By \cite{Holman2017} , we choose $f$ to be a modified Gaussian with singularities located both in certain space and in direction, that is, a coherent state, as is shown in Figure \ref{localcompare}(a). We use the Landweber iteration to reconstruct $f$. The artifacts are still there after $100$ iterations and the error becomes stable. Then we rotate $f$ or move it to see what happens to the artifacts. Specifically, in Figure \ref{localcompare}(c) and (d), $f$ remains in the location but is rotated by some angles. In (e) and (f), we move $f$ closer to the center and rotate it a bit. As the wave front set of $f$ changes, the artifacts changes and always appear in the location of their conjugate vectors.
	\begin{figure}[h]
		\centering
		\begin{subfigure}{0.3\textwidth}
			\centering
			\includegraphics[width=0.9\linewidth]{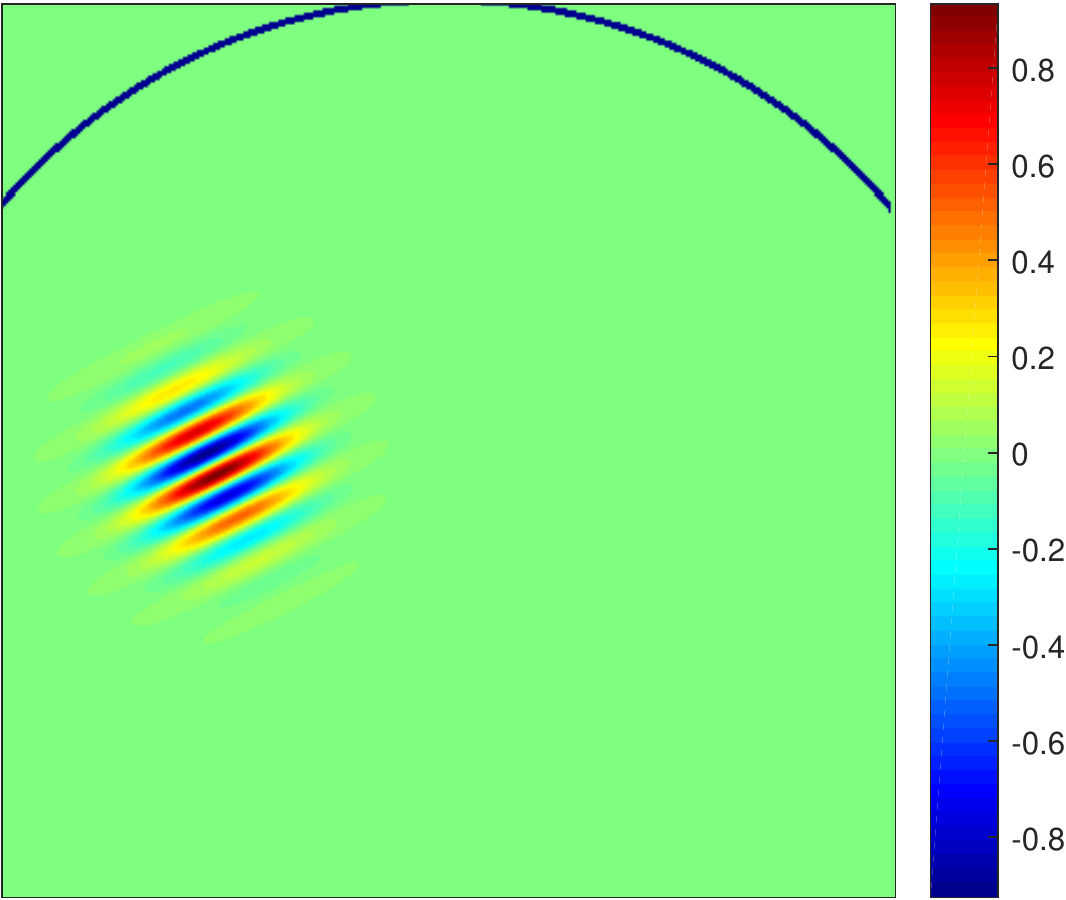}
			\subcaption{true $f$}
		\end{subfigure}%
		\begin{subfigure}{0.3\textwidth}
			\centering
			\includegraphics[width=0.9\linewidth]{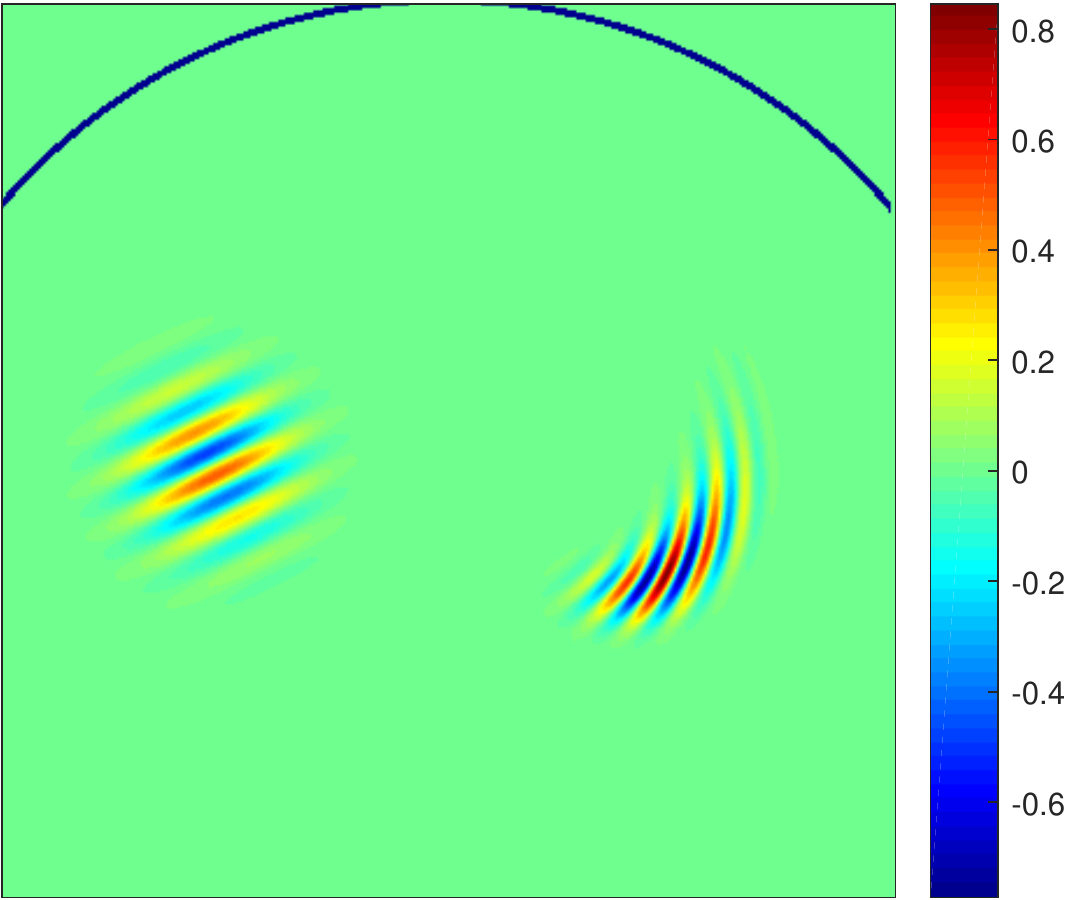}
			\subcaption{$f^{(100)}$ with $f$ rotated}
		\end{subfigure}
		\begin{subfigure}{0.3\textwidth}
			\centering
			\includegraphics[width=0.9\linewidth]{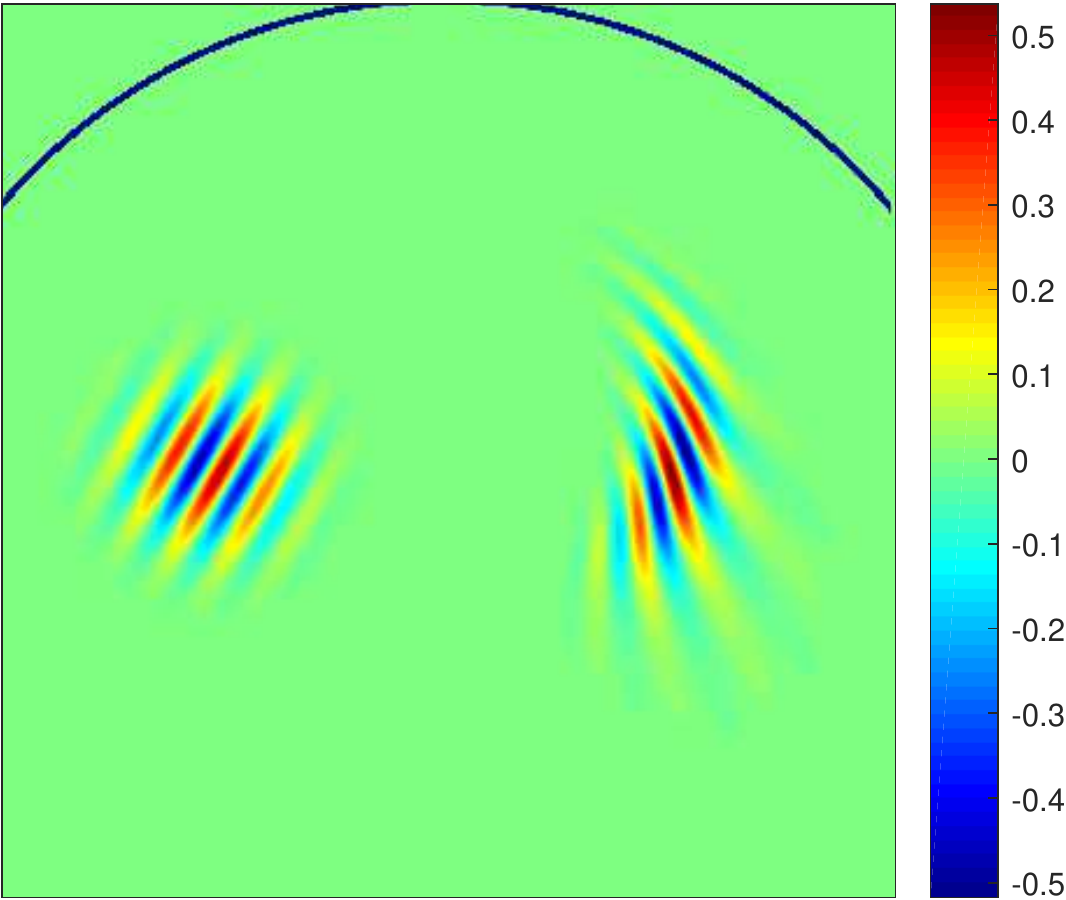}
			\subcaption{$f^{(100)}$ with $f$ rotated}
		\end{subfigure}
		\medskip
		\centering
		\begin{subfigure}{0.3\textwidth}
			\centering
			\includegraphics[width=0.9\linewidth]{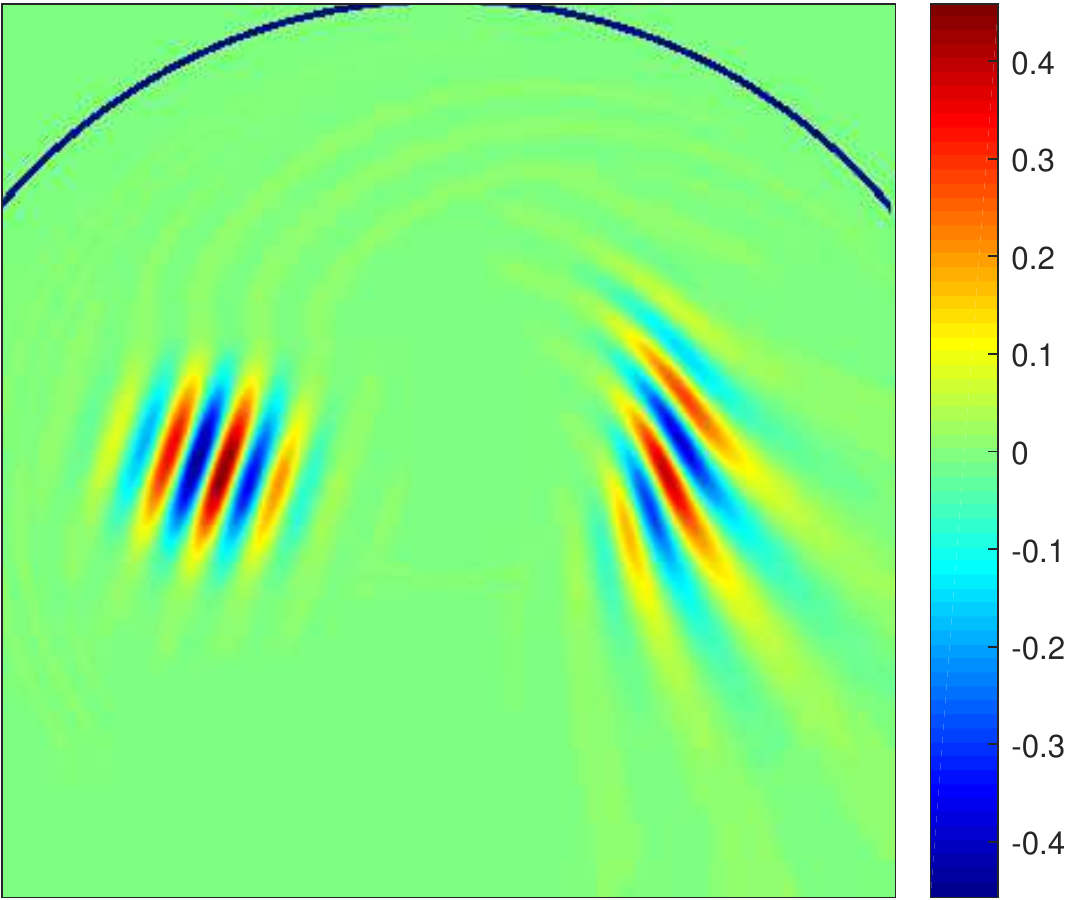}
			\subcaption{$f^{(100)}$ with $f$ rotated}
		\end{subfigure}
		\begin{subfigure}{0.3\textwidth}
			\centering
			\includegraphics[width=0.9\linewidth]{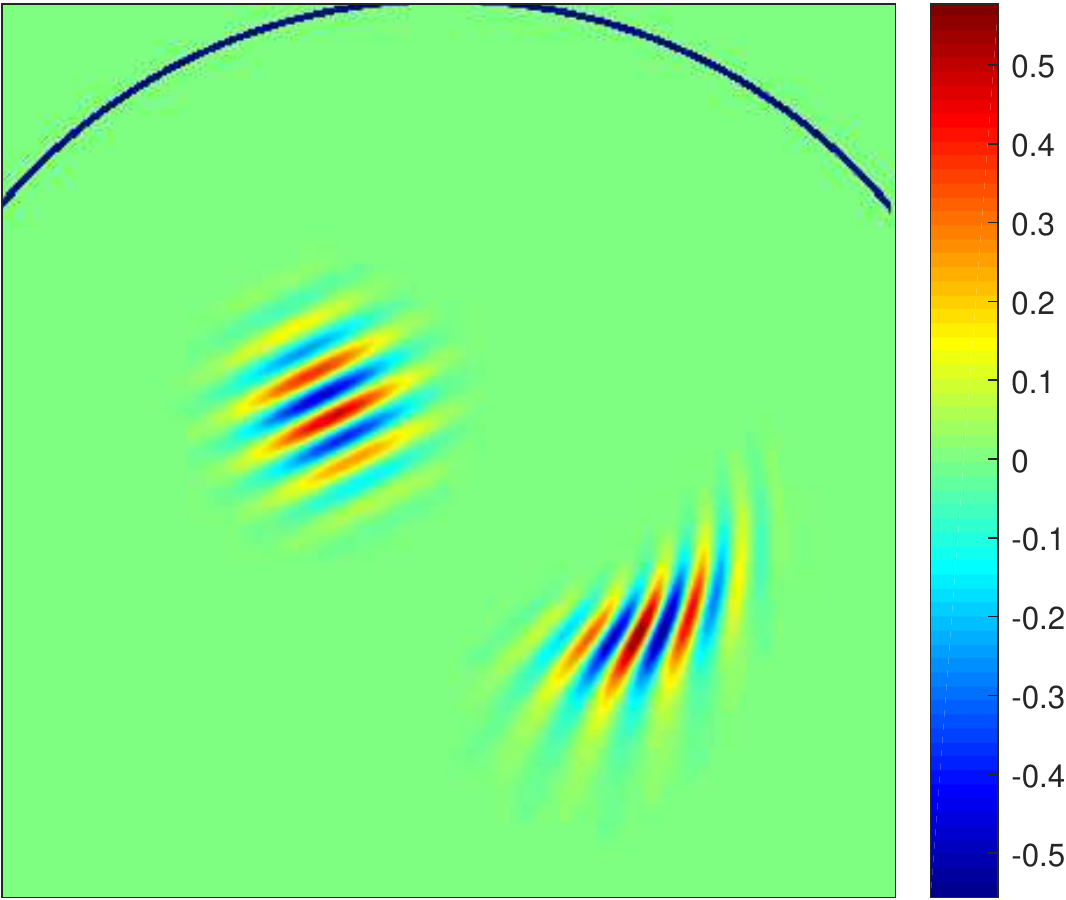}
			\subcaption{$f^{(100)}$ with $f$ moved}
		\end{subfigure}%
		\begin{subfigure}{0.3\textwidth}
			\centering
			\includegraphics[width=0.9\linewidth]{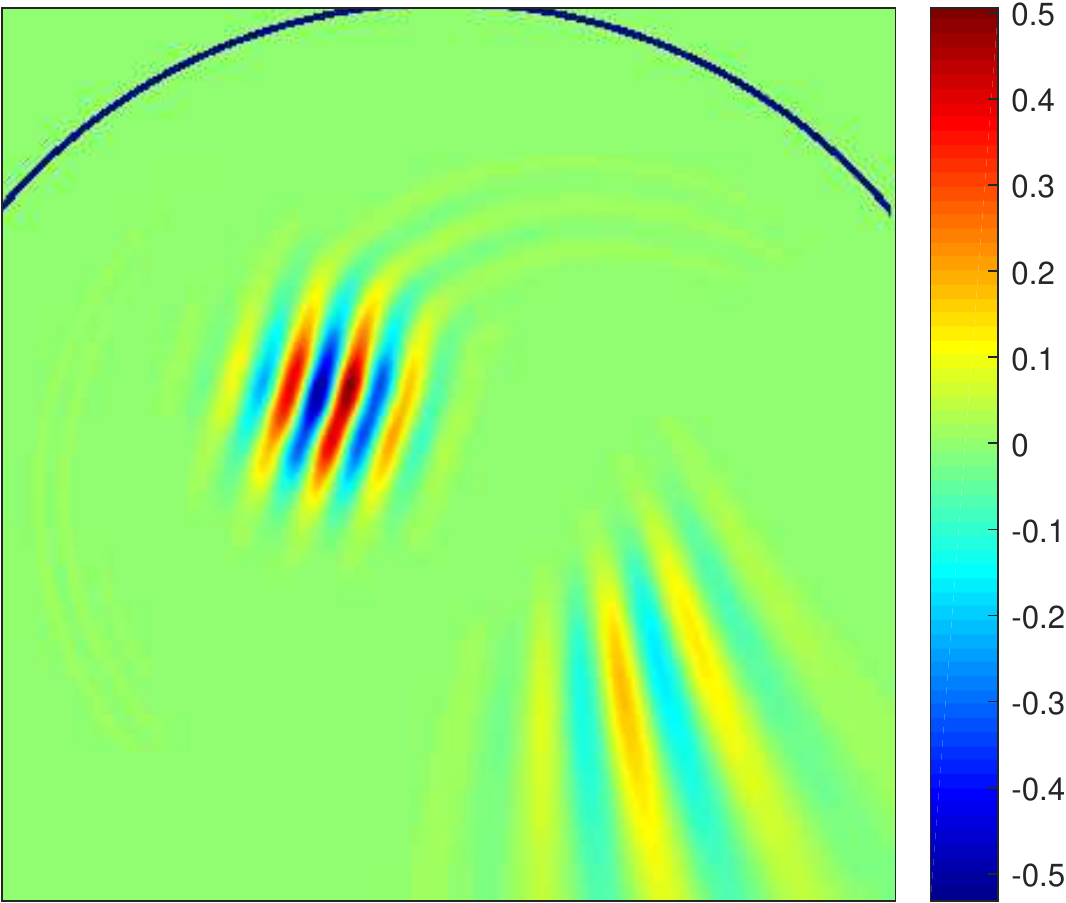}
			\subcaption{$f^{(100)}$ with $f$ rotated}
		\end{subfigure}
		\caption{Local reconstruction by Landweber iteration.}
		\label{localcompare}
	\end{figure}
	
\end{eg}
\subsection{Global Problems}
In this section, we consider the artifacts when we use full data for reconstruction.
Suppose $(x_0,\xi^0)$ in $\text{\(\WF\)}(f)$. There are two broken rays in $\Gamma$ that carry this singularity.
One broken ray $\nu_0$ represented by $(s_0,\alpha_0)$ has it in the incoming part, and the other one $\nu_{-1}$ represented by $(s_{-1},\alpha_{-1})$ has it in the reflected part. Suppose $(x_1,\xi^1)$ and $(x_{-1},\xi^{-1})$ are its conjugate covectors along $\nu_0$ and $\nu_{-1}$, if they exist. We have the following cases.

If at least one of $(x_1,\xi^1)$ and $(x_{-1},\xi^{-1})$ does not exist, for example $(x_1,\xi^1)$, then the singularity caused by $(x_0,\xi^0)$ in $V^0$ cannot be canceled via $\nu_0$. With the assumption that $\mathcal{B}(f)$ is smooth, this indicates $(x_0,\xi^0) \in \text{\(\WF \)}(f)$ impossible.

If both $(x_1,\xi^1)$ and $(x_{-1},\xi^{-1})$ exist, then the singularities might be canceled by them. We continue to consider $\nu_1$, $\nu_{-2}$ and so on. Then we get a sequence of broken rays and conjugate covectors. 
We define the set of all conjugate covectors related to $(x_0,\xi^0)$ in the following
$$
\mathcal{M}(x_0,\xi^0) = \{ (x_i,\xi^i), \text{if it exists and conjugate to $(x_{i-1},\xi^{i-1})$, for } i = 0, \pm 1, \pm 2, \ldots \}.
$$
If $\mathcal{M}(x_0,\xi^0)$ contains finitely many $(x_i,\xi^i)$ whose index $i$ is positive(or negative), we say it is \textit{incomplete} in positive(or negative) direction. Otherwise, we say it is \textit{complete}.

\begin{eg}\label{egdisk2}
	As is shown in the Figure \ref{globalbrt}, we use the same domain and family of tomography as in Example \ref{egdisk0}. Especially, we suppose the disk is centered at the origin for simplification.	
	\begin{figure}[h]
		\includegraphics[height=0.3\textwidth]{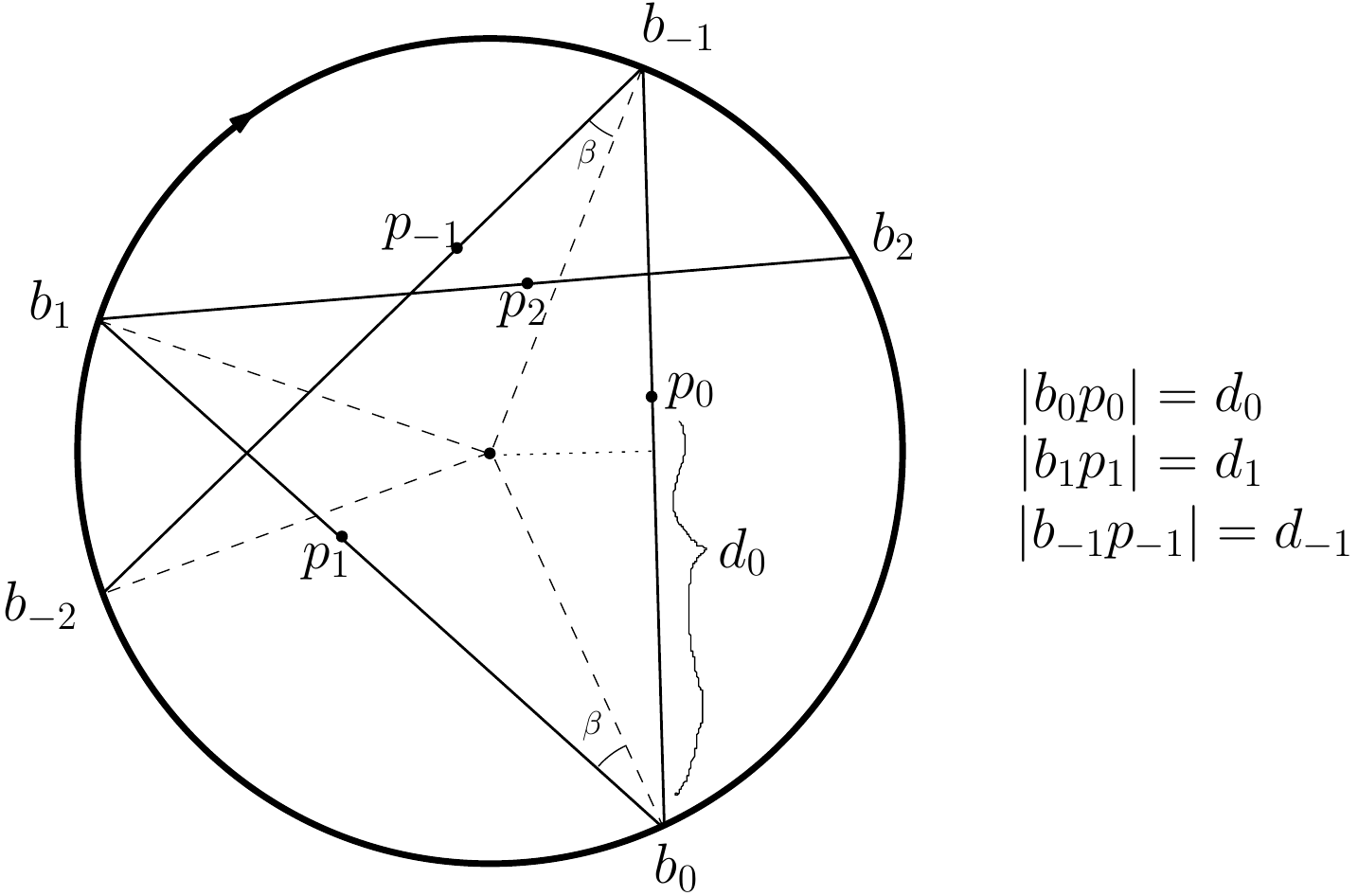}
		\caption{Inside a circular mirror, a sequence of broken rays and conjugate points on them.}
		\label{globalbrt}
	\end{figure}	
	Considering a point $(p_0,\xi^0)$, we have a sequence of broken rays 
	$$\ldots, b_{-2} b_{-1} b_0,\ b_{-1} b_{0} b_1,\ b_{0} b_{1} b_2,\ldots\ b_{i-1} b_{i} b_{i+1},\ldots ,$$
	as well as the set  $\mathcal{M}(p_0,\xi^0)$.
	\begin{pp}
		We say $(p_0,\xi^0)$ is radial if $p_0$ is the midpoint of a chord such that $\xi_0$ is in its conormal. Then $\mathcal{M}(p_0,\xi^0)$ is complete if and only if $(p_0,\xi^0)$ is radial.
	\end{pp}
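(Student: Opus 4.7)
The plan is to reduce the proposition to a one-dimensional dynamical system tracking the position of $p_i$ along its chord, and then solve the recursion in closed form.

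Normalize the circle to radius $1$, so $\kappa \equiv -1$. A classical observation about the circle (the isosceles triangle formed by the chord and the two radii at its endpoints, combined with the law of reflection) shows that the incidence angle $\beta$ is preserved at every bounce. Hence every chord $b_{i-1}b_i$ in the reflection orbit has the same length $L = 2c$, where $c := \cos\beta$, and the data appearing in Corollary \ref{conjugatecor} are constant along the orbit. Parameterize the position of $p_i$ on chord $b_{i-1}b_i$ by $T_i$, the distance from $p_i$ to $b_i$. Applying Corollary \ref{conjugatecor}(b) with $\kappa = -1$ and $\langle w(\alpha_i),\dot\gamma(\tau_i)\rangle = -c$, the conjugate point $p_{i+1}$ on the outgoing part of $\nu_i$ sits at distance $S_{i+1} = T_i c/(2T_i - c)$ from $b_i$ along the chord $b_ib_{i+1}$. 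Viewing $p_{i+1}$ as the incoming point of $\nu_{i+1}$, its distance from $b_{i+1}$ equals
\[
T_{i+1} \;=\; L - S_{i+1} \;=\; \frac{c(3T_i - 2c)}{2T_i - c}.
\]

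The substitution $u_i := T_i - c$ (signed deviation from the midpoint) linearizes this recursion: a short calculation gives $u_{i+1} = cu_i/(c+2u_i)$, which inverts to
\[
\frac{1}{u_{i+1}} \;=\; \frac{1}{u_i} + \frac{2}{c}.
\]
The backward map, obtained via the symmetry expressed in equation (\ref{solvep}), yields the same relation traversed in reverse. Thus, whenever $u_0 \neq 0$, the reciprocals $\{1/u_i\}_{i\in\mathbb{Z}}$ form a bi-infinite arithmetic progression with common difference $2/c$, while $u_0 = 0$ is a fixed point of the recursion forcing $T_i \equiv c$, i.e.\ each $p_i$ is the midpoint of its chord.

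To conclude, note that $p_i$ belongs to $\supp f \subset \Omega$ only when $T_i \in (0, L)$, equivalently $|u_i| < c$, equivalently $1/u_i \notin [-1/c, 1/c]$. If $u_0 = 0$, every $p_i$ is the midpoint and $\mathcal{M}(p_0,\xi^0)$ is complete. If $u_0 \neq 0$, the arithmetic progression $\{1/u_i\}$ has step exactly equal to the length of the closed forbidden interval $[-1/c,1/c]$, so it must contain at least one term $1/u_{i_*}$ inside this interval. Because $|u_0| < c$ gives $|1/u_0| > 1/c$, we have $i_* \neq 0$, and the corresponding $p_{i_*}$ is either strictly outside the disk or on its boundary — in either case not a valid member of $\mathcal{M}$. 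Consequently the sequence of conjugate covectors terminates at or before index $i_*$ in whichever direction $i_*$ lies, so $\mathcal{M}(p_0,\xi^0)$ is incomplete. The main subtlety is the boundary case: since step size and forbidden-interval length coincide, an orbit could a priori graze the endpoints $\pm 1/c$ indefinitely rather than properly escape; ruling this out requires the assumption $\supp f \subset \Omega$ (open disk), which excludes boundary points from valid members of $\mathcal{M}$.
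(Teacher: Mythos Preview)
Your proof is correct and follows essentially the same route as the paper: both reduce to a one-dimensional recursion for the position of $p_i$ along its chord, linearize via a midpoint-centered substitution to obtain an arithmetic progression of reciprocals, and conclude by observing that this progression must enter the forbidden interval corresponding to leaving the disk. Your variable $u_i$ is simply $c\cdot a_i$ in the paper's notation, so the recursions $\frac{1}{u_{i+1}}=\frac{1}{u_i}+\frac{2}{c}$ and $\frac{1}{a_{i+1}}=\frac{1}{a_i}+2$ are identical up to scaling; you are slightly more careful than the paper about the grazing case $1/u_i=\pm 1/c$, which is a welcome addition.
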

	\begin{proof}
		Fix a point $p_i$. It might have a conjugate point $p_{i+1}$ along $b_{i-1}b_{i}b_{i+1}$ or $p_{i-1}$ along $b_{i-2}b_{i-1}b_{i}$. Let $d_i = |b_ip_i|$ be the distance along the ray from $p_i$ to the boundary point $b_i$. Notice all incidence and reflection angles are equal (call them $\beta$). Then $|b_ib_{i+1}| = 2\cos\beta$ for all index $i$.
		
		Recall Corollary \ref{conjugatecor}. In this case, we have $\Delta t_1=d_1$, $\Delta t_2=2\cos \beta -d_2$, and
		$\frac{d \alpha_2}{d \alpha_1} = \frac{2 d_1}{\cos \beta} - 1$.
		Then $p_i$ has a conjugate point $p_{i+1}$ inside the domain if and only if $d_{i+1}$ given by 
		$$\frac{1}{d_i} + \frac{1}{2 \cos \beta - d_{i+1}} = \frac{2}{\cos \beta}$$
		has a solution in $(0,2\cos\beta)$.
		To simplify, we change the variable that $d_i = \cos \beta(a_i + 1)$. Thus,
		\begin{equation}\label{aiter}
		\frac{1}{1+a_i} + \frac{1}{1 - a_{i+1}} = 2 \implies 2a_ia_{i+1} +a_{i+1} - a_i = 0 .
		\end{equation}
		The requirement that $p_{i}$  is inside the domain means we are finding solutions for $a_i \in (-1,1)$.
		
		\textit{case 1.} $a_0 = 0$, which is followed by $a_i = 0$ for any integer $i$. This is the case when
		we have $p_0$ at the midpoint of some chord and $\xi^0$ is the conormal of the chord. The same is true with all $(p_i,\xi^i)$. We have a complete $\mathcal{M}(p_0,\xi^0)$.
		
		\textit{case 2.} $a_i \neq 0$. Then (\ref{aiter}) can be reduced to the following iteration formula 
		$$
		\frac{1}{a_{i+1}} = \frac{1}{a_{i}} + 2
		$$
		Suppose we start from some $a_0$. Each time, the next $\frac{1}{a_{i}}$ increases or decreases by $2$.
		With $\frac{1}{a_0} \in (-\infty,-1) \bigcup (1,\infty)$, finally we must have some $\frac{1}{a_i}$ belonging to the interval $(-1,1)$, which mean $p_i$ goes out of the domain. In this case, $\mathcal{M}(p_0,\xi^0)$ is always incomplete.
	\end{proof}	
\end{eg}

Next, let $V^i$ be a small conic neighborhoods of a fixed $(x_i,\xi^i) \in \mathcal{M}(x_0,\xi^0)$ and $U_i = \pi(V^i)$. Let $f_i$ be the restriction of $f$ on $U_i$. By shrinking $V^i$ carefully, we have $C(V^i)= V^{i-1}$, if both $(x_{i-1},\xi^{i-1})$ and $(x_i,\xi^i)$ exist. Then the cancellation of singularities shows,
$$
R_{i-1}f_{i-1} + R_i f_i = 0 \mod C^\infty.
$$
For $\mathcal{M}(x_0,\xi^0)$ that is finite in positive direction, finally we have
$$
R_{i_0}f_{i_0} = 0 \mod C^\infty.\\
$$
By applying the diffeomorphism $\chi^*$ and forward substitution, we can show all $f_i$ must be smooth. It is similar if  $\mathcal{M}(x_0,\xi^0)$ is incomplete in negative direction. 
This proves when $\mathcal{M}(x_0,\xi^0)$ is incomplete, $(x_0,\xi^0)$ is a recoverable singularity. 

\begin{corollary}
	Suppose everything as in Example \ref{egdisk2}. Then $(x_0,\xi^0)$ is recoverable if $(x_0,\xi^0)$ is not radial.
\end{corollary}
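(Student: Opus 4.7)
The plan is to combine the proposition just established in Example \ref{egdisk2} with the general ``recoverability from incomplete chains'' argument sketched in the paragraph immediately preceding the corollary. The overall structure is short, but one needs to track the chain of cancellations carefully.

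First, I would apply the proposition: since $(x_0,\xi^0)$ is not radial, $\mathcal{M}(x_0,\xi^0)$ is incomplete. In fact, the iteration $1/a_{i+1} = 1/a_i + 2$ forces the sequence to terminate after finitely many steps in at least one direction; without loss of generality assume it terminates at some $(x_{i_0},\xi^{i_0})$ in the positive direction. Geometrically, the broken ray carrying $(x_{i_0},\xi^{i_0})$ on its incoming segment has no conjugate covector of $(x_{i_0},\xi^{i_0})$ inside $\supp f$.

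Second, I would exploit this dead end microlocally. Working in the local splitting $\mathcal{B}f = R_{i_0} f_{i_0} + R_{i_0+1} f_{i_0+1}$ near the terminal broken ray, the absence of a conjugate covector means there is no second element of $\WF(f)$ that $C_{i_0+1}$ sends to the same image as $C_{i_0}(x_{i_0},\xi^{i_0})$. Combined with the global hypothesis $\mathcal{B}f \in C^\infty$ and Theorem \ref{cancelofC}, this yields $R_{i_0} f_{i_0} \equiv 0 \bmod C^\infty$. Since $R_{i_0}$ is microlocally an elliptic FIO (Proposition \ref{Radon} together with the diffeomorphism property of $\chi^*$), this forces $f$ to be smooth at $(x_{i_0},\xi^{i_0})$. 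Then I would run a finite backward induction: for $i$ descending from $i_0$ to $1$, the cancellation identity $R_{i-1} f_{i-1} + R_i f_i \equiv 0 \bmod C^\infty$, combined with smoothness of $f$ at $(x_i,\xi^i)$ already obtained, gives $R_{i-1} f_{i-1} \equiv 0$, and ellipticity propagates smoothness to $(x_{i-1},\xi^{i-1})$. After $i_0$ steps I conclude $(x_0,\xi^0)\notin\WF(f)$, which is the definition of recoverability. If instead the chain terminates only in the negative direction, the same argument runs with indices reversed.

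The main obstacle, and the step I would write out most carefully, is the microlocal translation in the second paragraph: converting the purely geometric statement ``no conjugate covector exists beyond $(x_{i_0},\xi^{i_0})$'' into the analytic statement ``$R_{i_0} f_{i_0}$ is smooth.'' This requires verifying that no \emph{unrelated} element of $\WF(f)$ — for instance one that happens to be carried on some other broken ray passing near the same region — can produce the same microlocal data and thereby cancel $(x_{i_0},\xi^{i_0})$; here I would invoke local diffeomorphism of the canonical relations to reduce all cancellations to genuine conjugate covectors in the sense of Theorem \ref{cancelofC}. Once that reduction is stated cleanly, the remaining bookkeeping is routine.
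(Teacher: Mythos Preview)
Your proposal is correct and follows essentially the same route as the paper: invoke the proposition of Example~\ref{egdisk2} to get incompleteness of $\mathcal{M}(x_0,\xi^0)$, then use the chain of cancellation identities $R_{i-1}f_{i-1}+R_if_i\equiv 0$ together with the terminal condition $R_{i_0}f_{i_0}\equiv 0$ and ellipticity of the $R_i$ to propagate smoothness back to $(x_0,\xi^0)$. Your write-up is more explicit than the paper's about the ellipticity step and about the possibility of interference from unrelated singularities, but the argument is the same one the paper sketches in the paragraph preceding the corollary.
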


\begin{eg}
	With the same set up as above, we first choose $f_1$ to be a modified Gaussian of coherent state whose singularities are not radial. To compare, then we choose $f_2$ to be  with radial singularities. 
	\begin{figure}[h]
		\centering
		\begin{subfigure}{0.3\textwidth}
			\centering
			\includegraphics[width=0.9\linewidth]{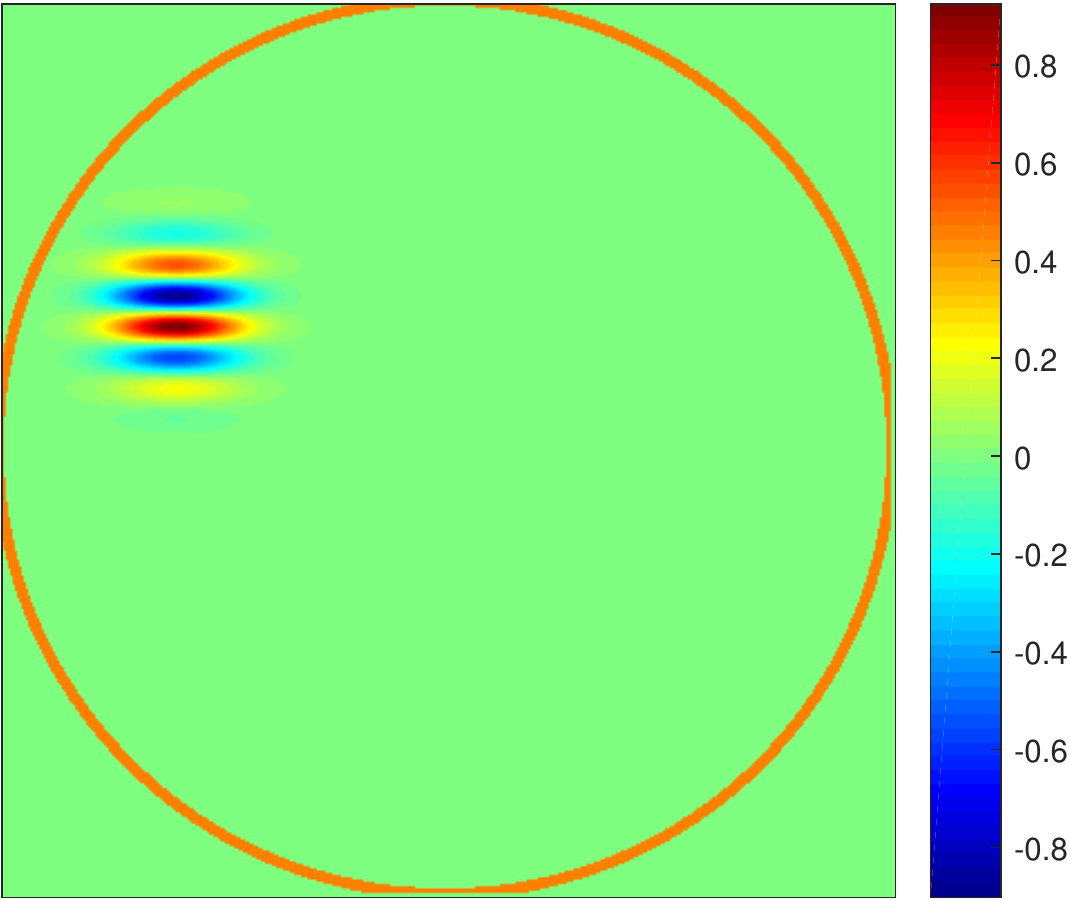}
			\subcaption{non-radial singularities $f_1$}
		\end{subfigure}%
		\begin{subfigure}{0.3\textwidth}
			\centering
			\includegraphics[width=0.9\linewidth]{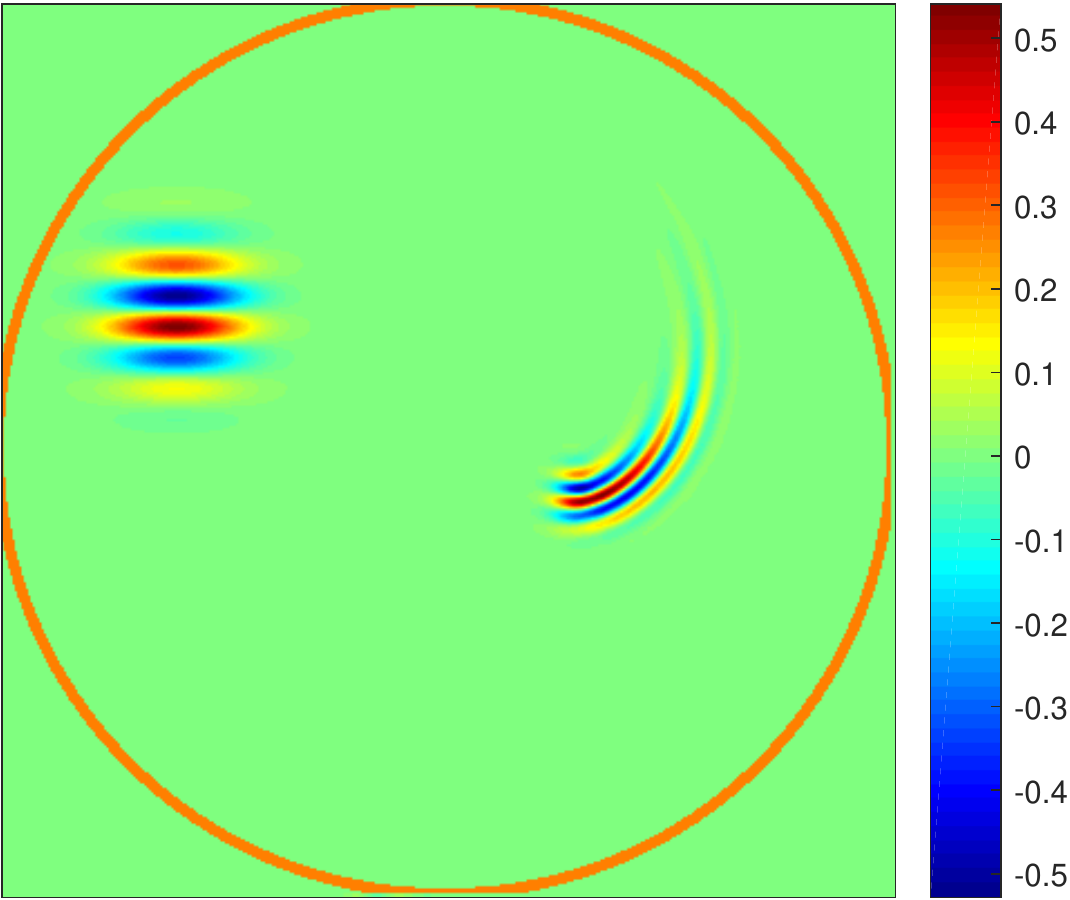}
			\subcaption{backprojection $f_1^{(1)}$}
		\end{subfigure}
		\begin{subfigure}{0.3\textwidth}
			\centering
			\includegraphics[width=0.9\linewidth]{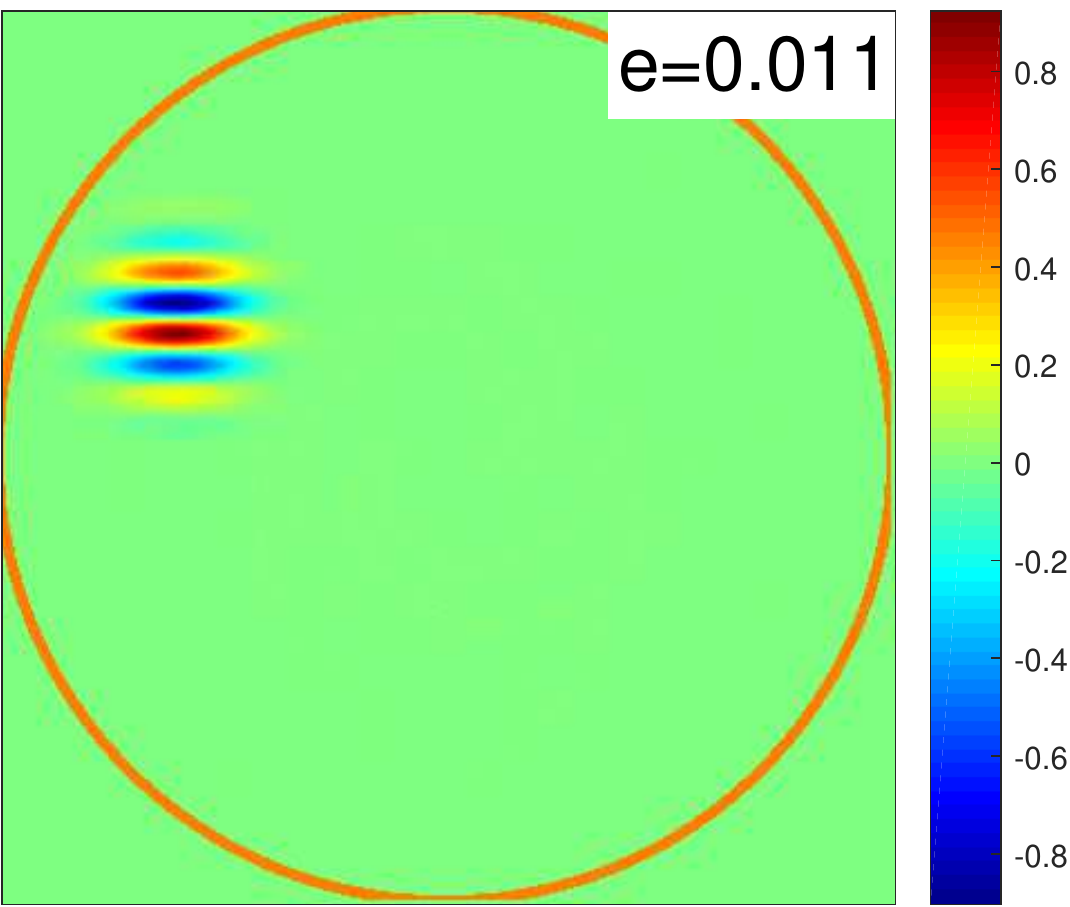}
			\subcaption{$f_1^{(100)}$}
		\end{subfigure}
		
		\centering
		\begin{subfigure}{0.3\textwidth}
			\centering
			\includegraphics[width=0.9\linewidth]{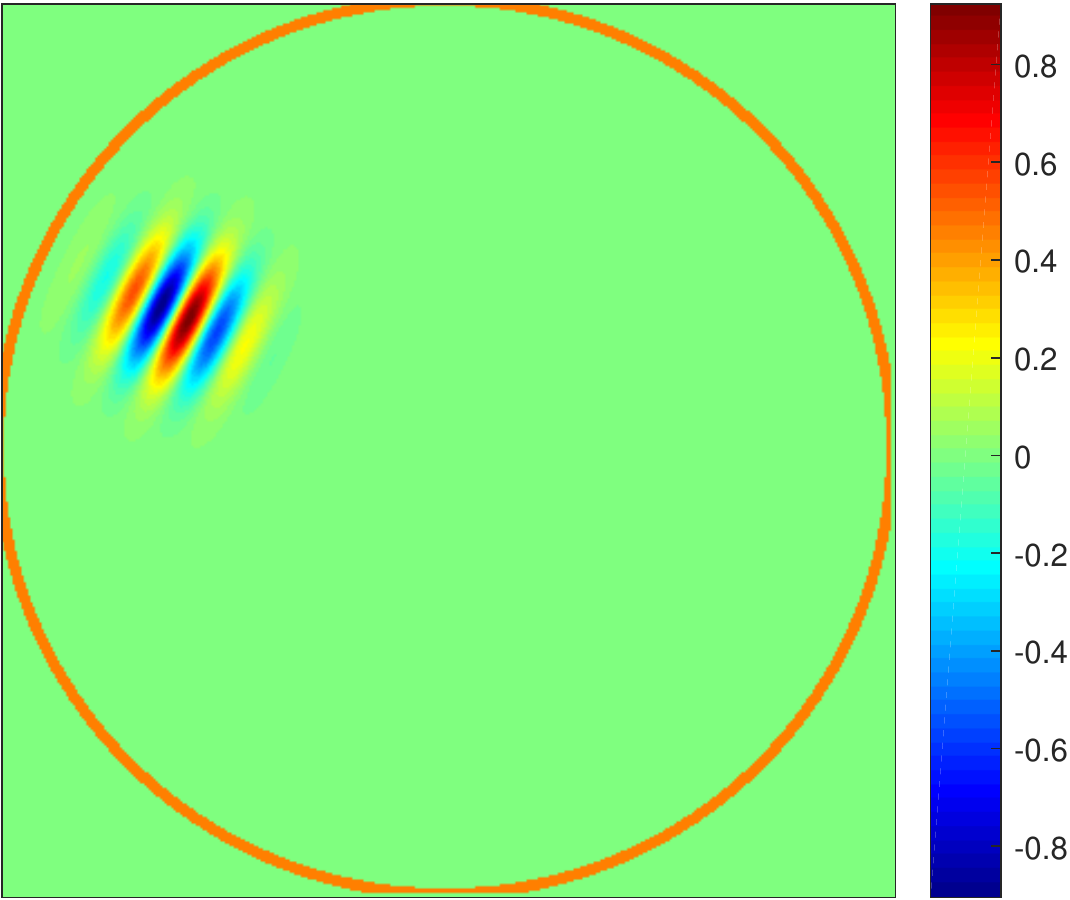}
			\subcaption{radial singularities $f_2$}		
		\end{subfigure}%
		\begin{subfigure}{0.3\textwidth}
			\centering
			\includegraphics[width=0.9\linewidth]{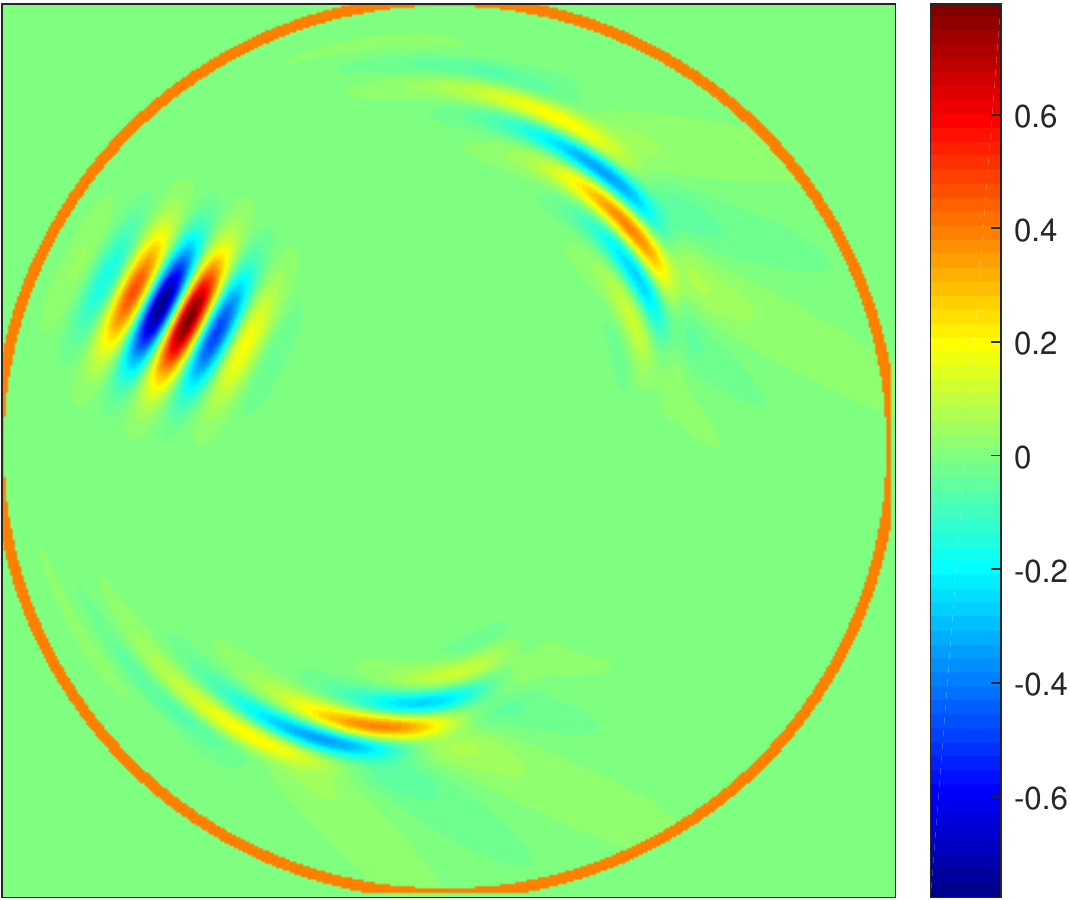}
			\subcaption{backprojection $f_2^{(1)}$}
		\end{subfigure}
		\begin{subfigure}{0.3\textwidth}
			\centering
			\includegraphics[width=0.9\linewidth]{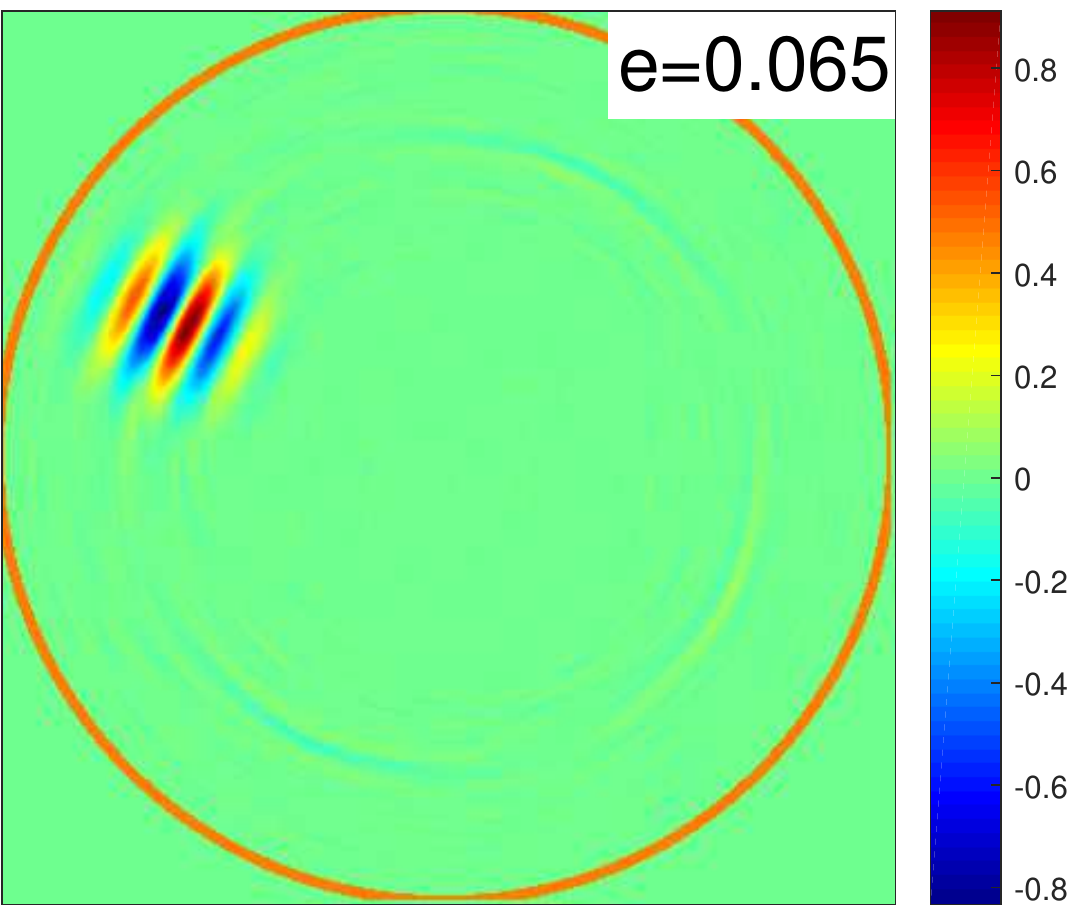}
			\subcaption{$f_2^{(100)}$}
		\end{subfigure}
		\caption{Reconstruction of $f_1$ and $f_2$ from global data, where $e = \frac{\|f - f^{(100)}\|_2}{\|f\|_2}$ is the relative error. }
		\label{globalfigure}
	\end{figure}
	
	As is shown in Figure \ref{globalfigure}, after performing Landweber iteration of $100$ steps, all artifacts fade out and the reconstruction has a small error if $f$ has non-radial singularities. On the contrary, if $f$ has radial singularities, the error still decreases as the iteration but in a much slower speed. In these two cases, since $f$ is only supported in a small set, the artifacts arising in the reconstruction may seem not so obvious. However, when $f$ is more complicated, the artifacts might be unignorable. In the following we choose $f_3$ to be a Modified Shepp-Logan phantom. 
	\begin{figure}[h]
		\centering
		\begin{subfigure}{0.3\textwidth}
			\centering
			\includegraphics[width=0.9\linewidth]{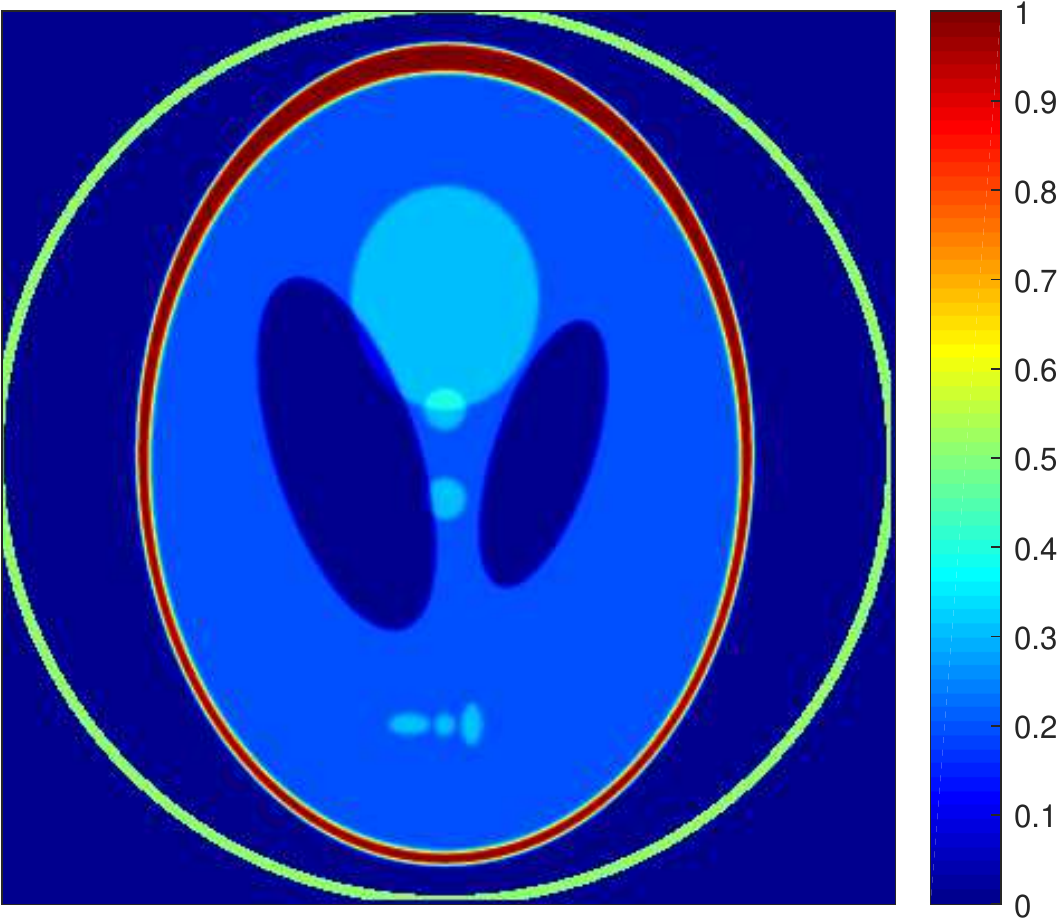}
			\caption{true $f_3$}
		\end{subfigure}%
		\begin{subfigure}{0.3\textwidth}
			\centering
			\includegraphics[width=0.9\linewidth]{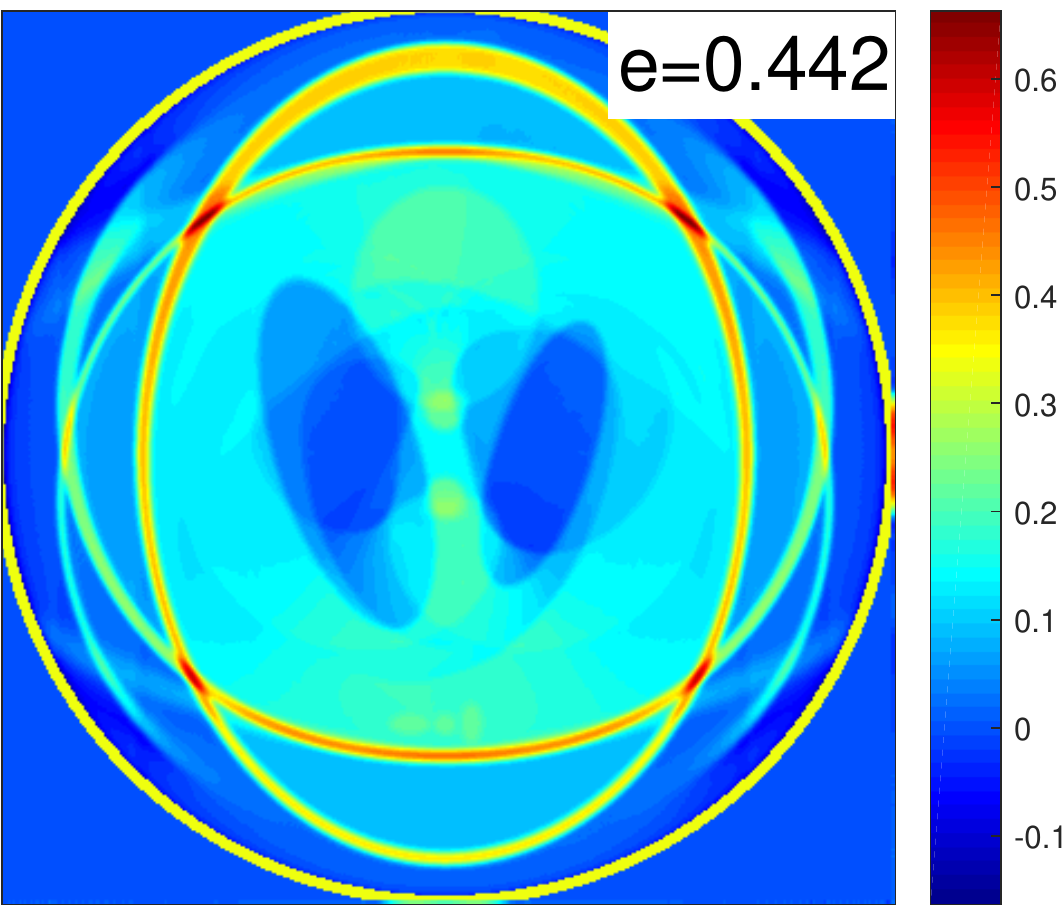}
			\subcaption{backprojection $f_3^{(1)}$}
		\end{subfigure}
		\begin{subfigure}{0.3\textwidth}
			\centering
			\includegraphics[width=0.9\linewidth]{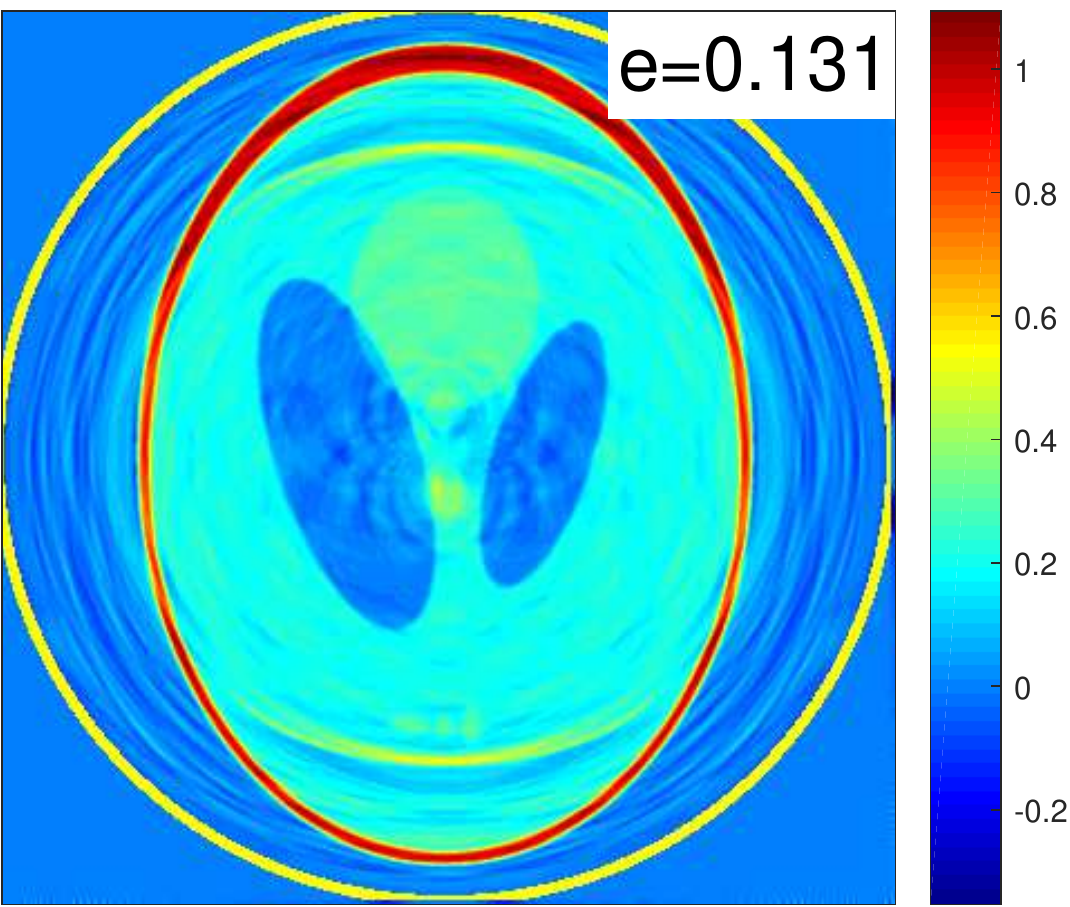}
			\subcaption{$f_3^{(100)}$}
		\end{subfigure}
		\caption{Reconstruction from global data for Modified Shepp-Logan phantom $f_3$, where $e = \frac{\|f - f^{(100)}\|_2}{\|f\|_2}$ is the relative error.}
		\label{test}
	\end{figure}
	
	The error plots of these three cases are in Figure \ref{globalfigerror} to better illustrate the difference between radial and non-radial singularities. They also show where the artifacts appear( for more details, see \ref{subconnection}). It is clear to see the error of reconstruction is much smaller when we have non-radial singularities than radial ones. 
	\begin{figure}[h]
		\centering
		\begin{subfigure}{0.3\textwidth}
			\centering
			\includegraphics[width=0.9\linewidth]{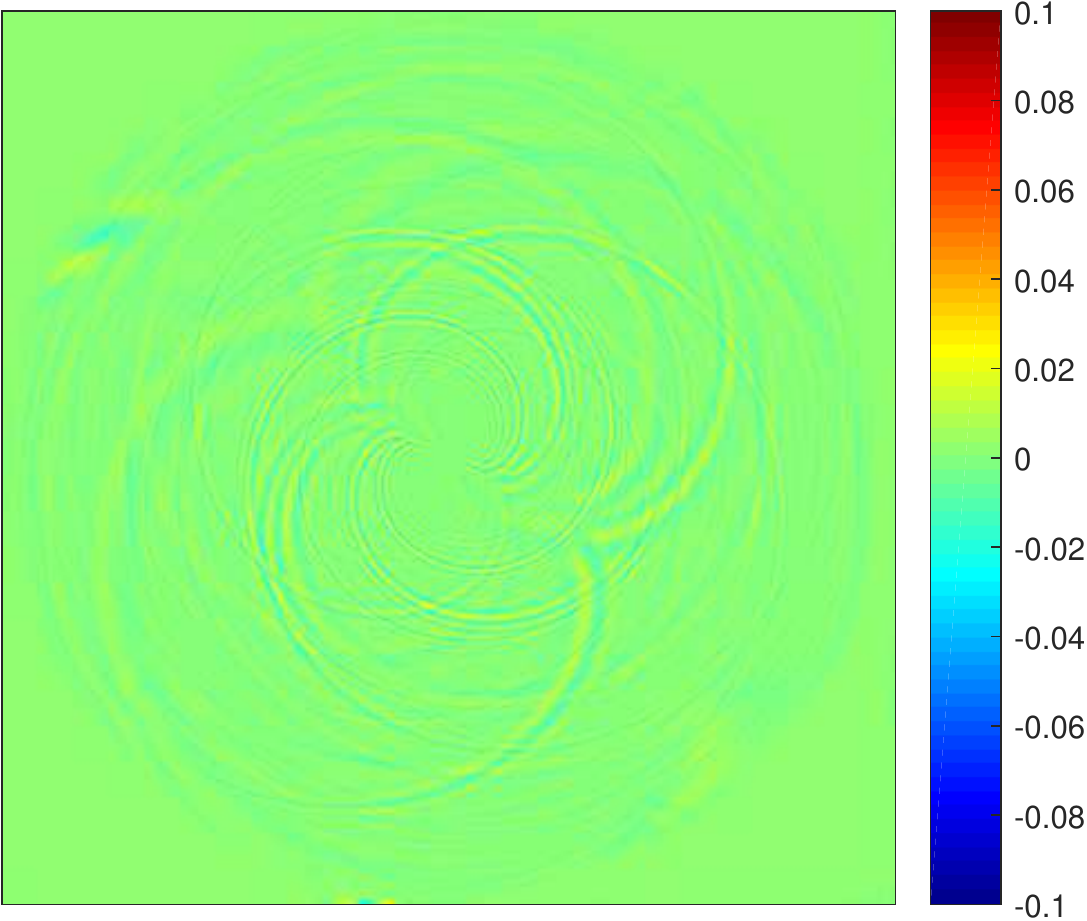}
		\end{subfigure}%
		\begin{subfigure}{0.3\textwidth}
			\centering
			\includegraphics[width=0.9\linewidth]{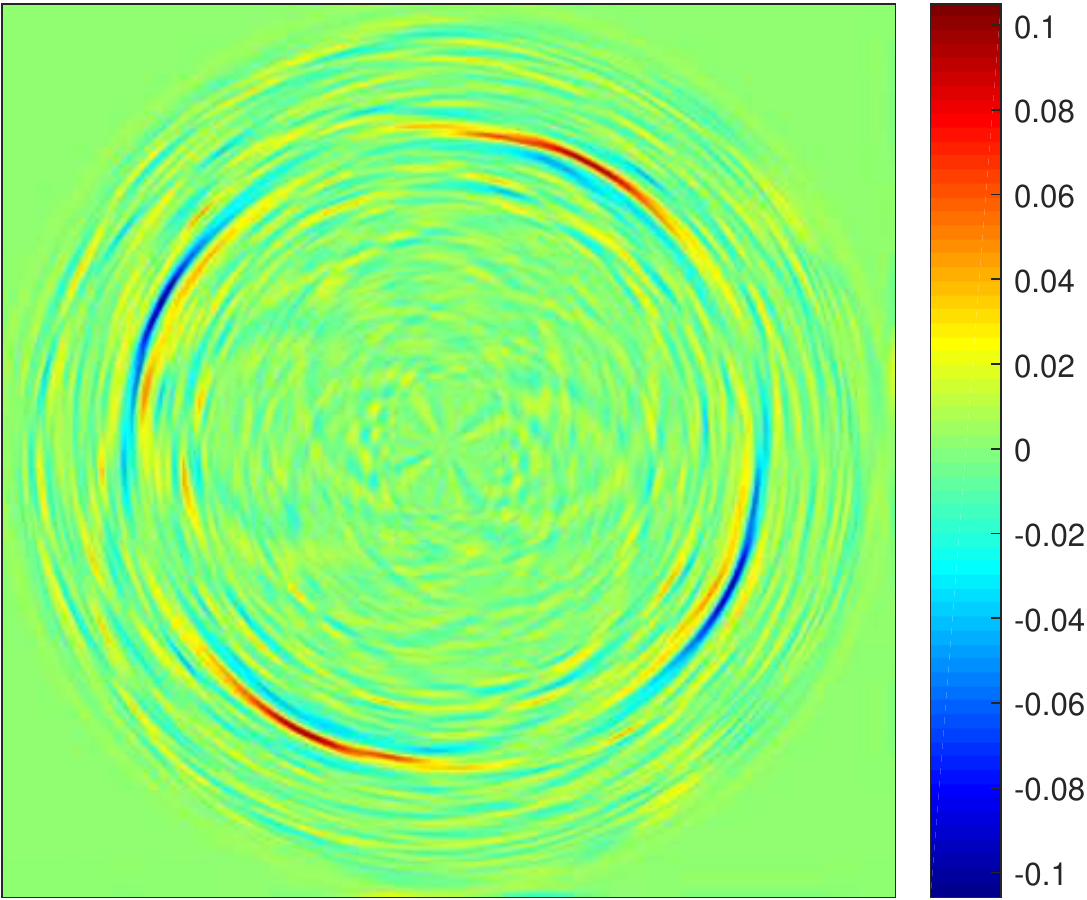}
		\end{subfigure}
		\begin{subfigure}{0.3\textwidth}
			\centering
			\includegraphics[width=0.9\linewidth]{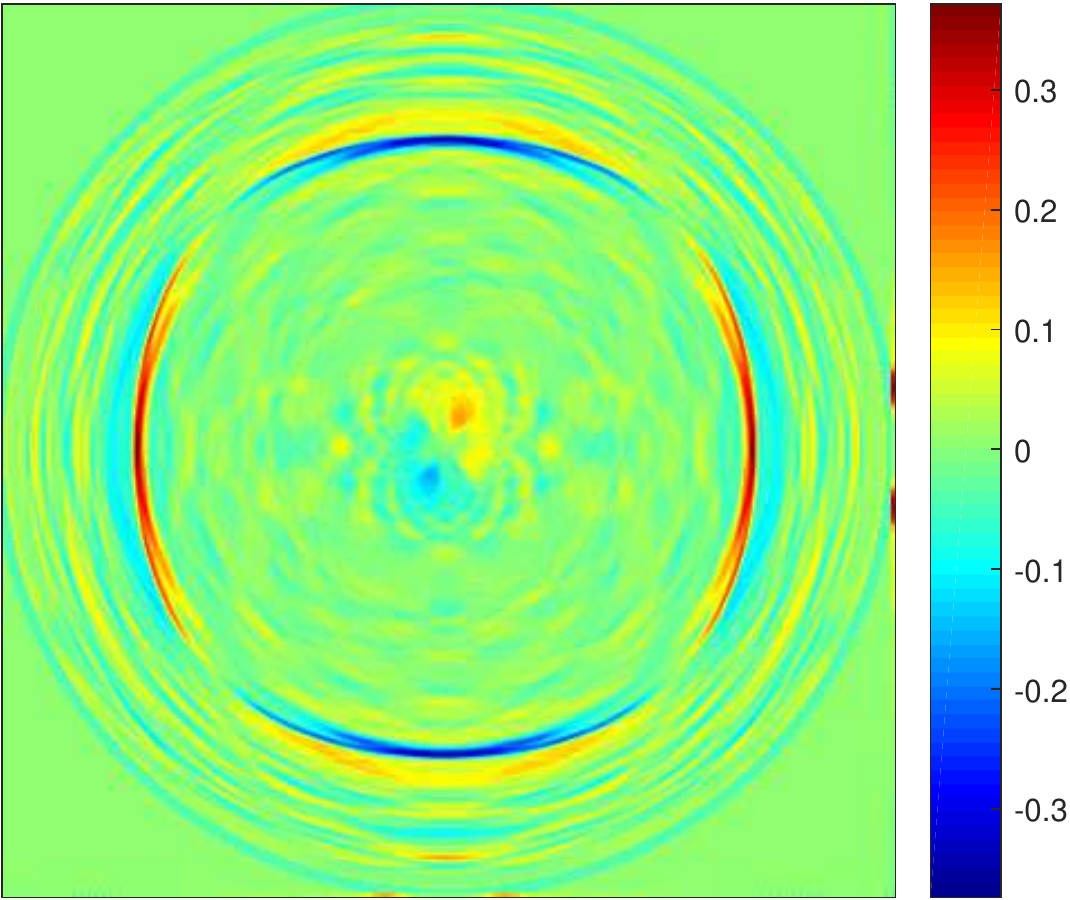}
		\end{subfigure} 	
		\caption{The error plot for the reconstruction of $f_1,f_2,f_3$ in order. The first two has the same range of color bar.}
		\label{globalfigerror}
	\end{figure}
\end{eg}
\begin{eg}
	In this example we consider the reconstruction of the V-line Radon transform in an elliptical domain $Q$ from global data. 
	By \cite{SUNWOO}, the billiard trajectory in an elliptical table has the following cases. If the trajectory crosses one of the focal points, then it converges to the major axis of $Q$. 	If the trajectory crosses the line segment between the two focal points, then it is tangent to a unique hyperbola, which is determined by the trajectory and shares the same focal points with $Q$.
	If it does not cross the line segment between the two focal points, then it is tangent
	to a unique ellipse, which shares the same focal points with $Q$.
	
	In the following numerical experiments, we choose $f$ as a coherent state. It is located and rotated such that the trajectory carrying its singularities falls into the last two cases above. We use Landweber iterations to reconstruct $f$ by iterating 100 steps. As in Figure \ref{ellipse}, in the reconstruction of the first coherent state, the artifacts disappear as we iterate, since some conjugate points are outside the domain. On the contrary, with conjugate points staying in the domain at least for the first reflection, there is a relative larger error in the reconstruction of the second one. 
	A more complete analysis of the ellipse case is behind the scope of this work.
	\begin{figure}[h]
		\centering
		\begin{subfigure}{0.25\textwidth}
			\centering
			\includegraphics[width =0.9\textwidth]{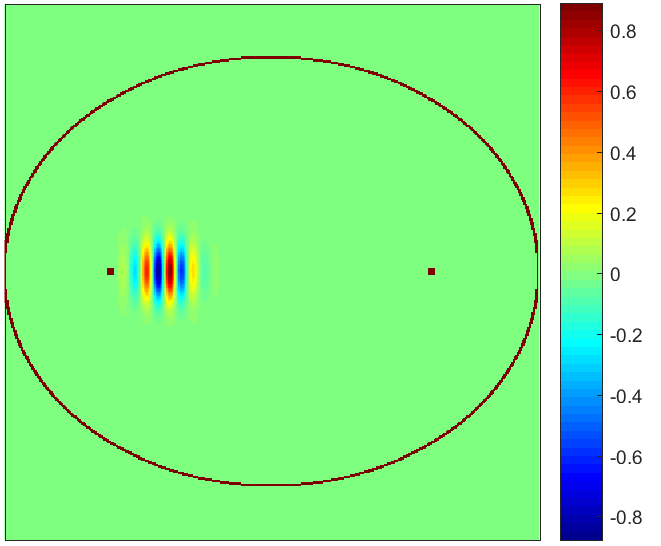}
		\end{subfigure}%
		\begin{subfigure}{0.25\textwidth}
			\centering
			\includegraphics[width =0.765\textwidth]{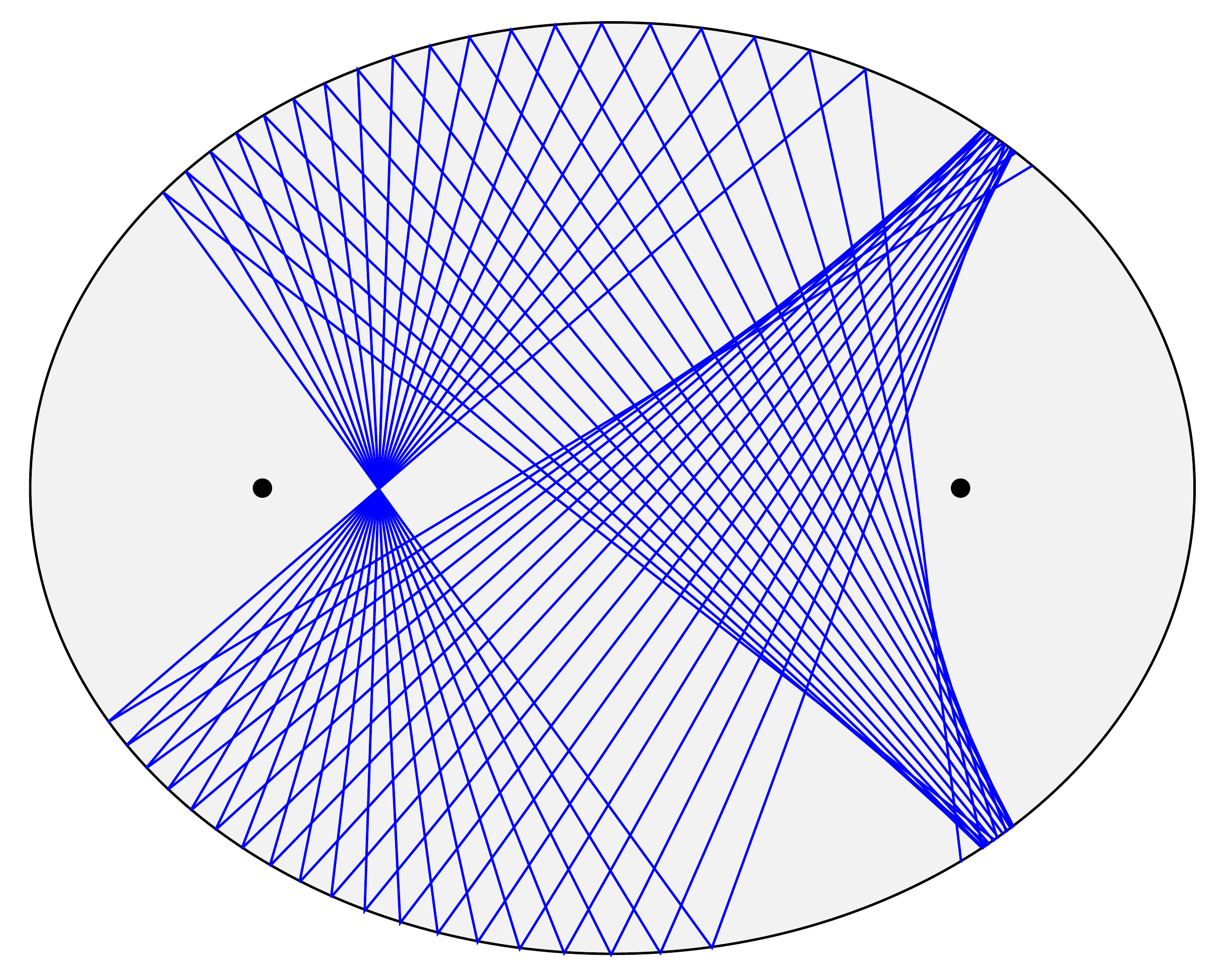}
		\end{subfigure}%
		\begin{subfigure}{0.25\textwidth}
			\centering
			\includegraphics[width =0.9\textwidth]{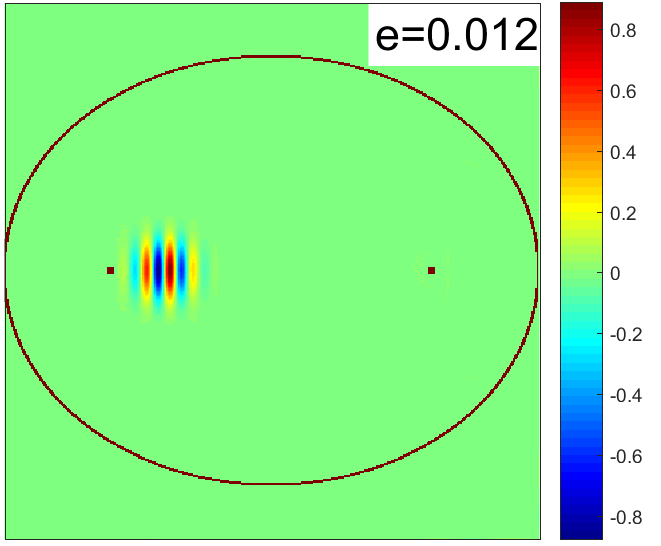}
		\end{subfigure}%
		\begin{subfigure}{0.25\textwidth}
			\centering
			\includegraphics[width =0.9\textwidth]{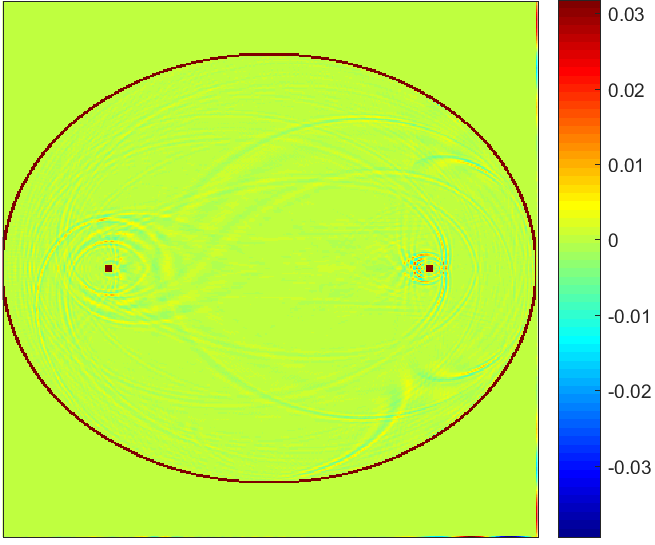}
		\end{subfigure}%
		
		\centering
		\begin{subfigure}{0.25\textwidth}
			\centering
			\includegraphics[width =0.9\textwidth]{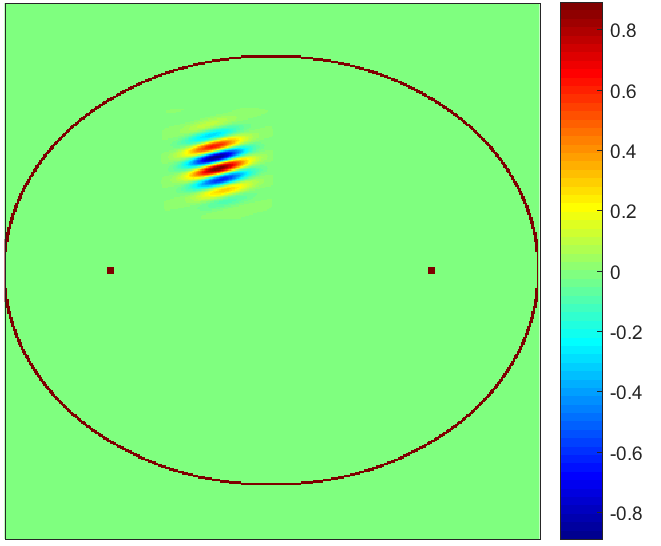}
		\end{subfigure}%
		\begin{subfigure}{0.25\textwidth}
			\centering
			\includegraphics[width =0.765\textwidth]{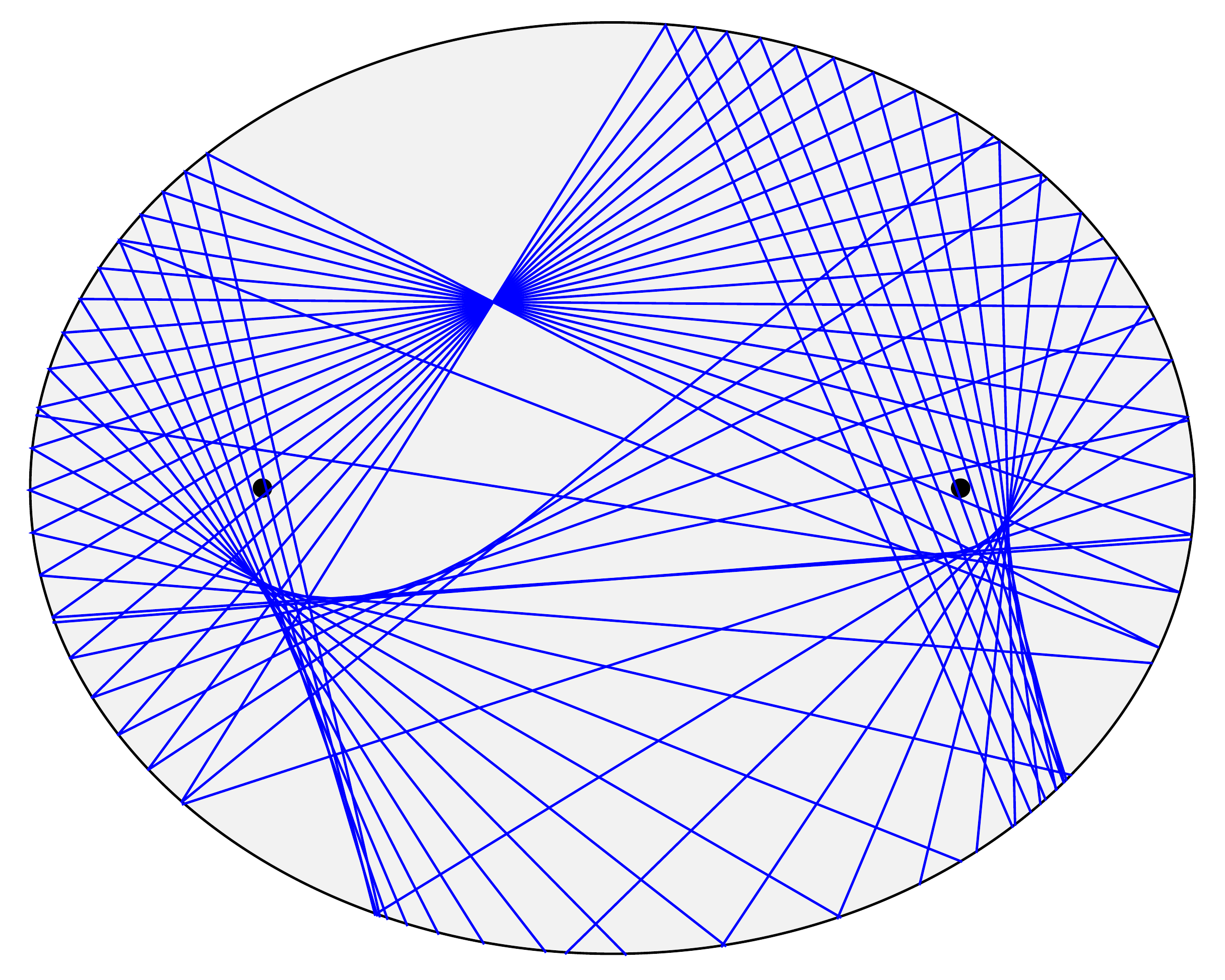}
		\end{subfigure}%
		\begin{subfigure}{0.25\textwidth}
			\centering
			\includegraphics[width =0.9\textwidth]{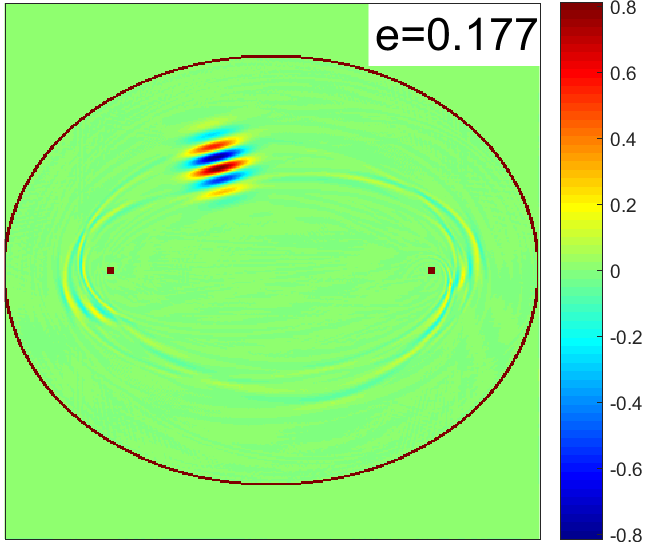}
		\end{subfigure}%
		\begin{subfigure}{0.25\textwidth}
			\centering
			\includegraphics[width =0.9\textwidth]{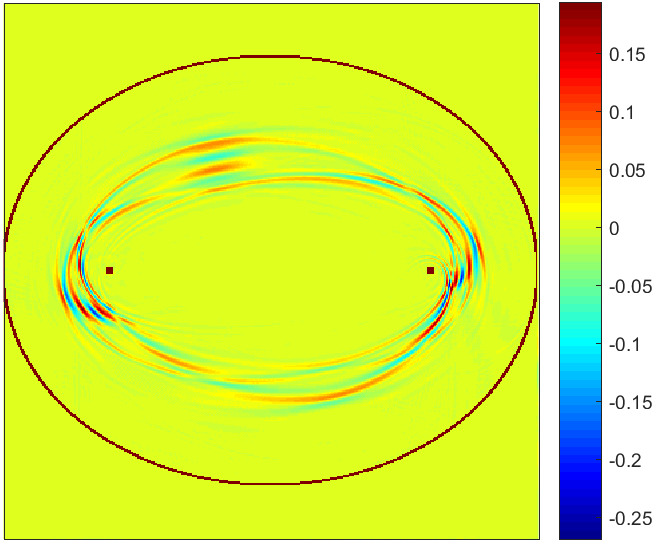}
		\end{subfigure}%
		\caption{Reconstruction of two coherent states. Left to right: true $f$, the envelopes (caused by trajectories that carry singularities and are reflected only once), $f^{(100)}$ (where $e=\frac{\|f - f^{(100)}\|_2}{\|f\|_2}$), the error.}
		\label{ellipse}
	\end{figure}
\end{eg}

\subsection{Comparison with previous results when the boundary is a circle}\label{subconnection}
This subsection is to connect our analysis to the results in \cite{Moon2017a}.
 By expanding $f$ and the data $\mathcal{B}f$ as Fourier series with respect to the angular variable, \cite{Moon2017a} gives an inversion formula (2.8) for V-line Radon transform with vertices on the circle. The denominator inside the integral has zeros for certain radius $r$ and with noises it could be very unstable. This indicates we can expect certain patterns of the artifacts in the reconstruction. We are going to show these artifacts predicted by (2.8) coincides with the conjugate covectors of radial singularities in the following.

When $(x_0,\xi^0)$ is radial, $\mathcal{M}(x_0,\xi^0)$ is complete and we have two cases. One is the case that $\mathcal{M}(x_0,\xi^0)$ is a periodic set with period $p$. That is, the broken rays that carry $(x_0,\xi^0)$ after several reflections form a regular polygon of $p$ edges, a convex or star one. The set $P$ of all possible regular polygons can be described by the Schl\"{a}fli symbol \cite{coxeter1961introduction}, 
$$P = \{(p/q),\ p,q \in \mathbb{N},\ 2 \leq 2q < p,\ \gcd(p,q)=1\}.$$ 
Here $(p/q)$ refers to a regular polygon with $p$ sides which winds $q$ times around its center. When $q = 1$, it is a convex regular one; otherwise it is a star one.
For the polygon $(p/q)$, the internal angle equals to $\frac{\pi(p-2q)}{p}$. This implies $|x| = \cos \frac{q\pi}{p}$, where $x$ is the midpoint of one edge. Suppose $\mathcal{B}f$ is smooth. We have
\begin{align*}
&R_{i-1}f_{i-1} + R_i f_i = 0 \mod C^\infty, \quad i = 1,\ldots, p-2 \\	
&R_{p-1}f_{p-1} + R_0 f_0 = 0 \mod C^\infty.
\end{align*}	
By forward substitution, we get 
$$
(1 + (-1)^{p-1})R_0 f_0 = 0 \mod C^\infty.
$$
When $p$ is odd, $f_0$ must be smooth, which implies $f$ is smooth and therefore $(x_0,\xi^0)$ is recoverable. When $p$ is even, it possibly causes artifacts. These artifacts are located at radius $|x|  =  \cos \frac{(2k+1)\pi}{2n}$, where $p = 2n$ and $q = 2k+1$ with $0 \leq 2q < p$. These radius are exactly the positive solution of $s$ such that $\cos(n(\arcsin(s) - \pi/2)) = 0$ in Formula (2.8) in \cite{Moon2017a}.

In the following example, we use the same function as in Figure \ref{globalfigure} but move them closer to the origin. The plot of error shows the artifacts are centered at the midpoint of each edges of regular stars. 

\begin{figure}[h]
	\begin{subfigure}{0.3\textwidth}
		\centering
		\includegraphics[width=0.9\linewidth]{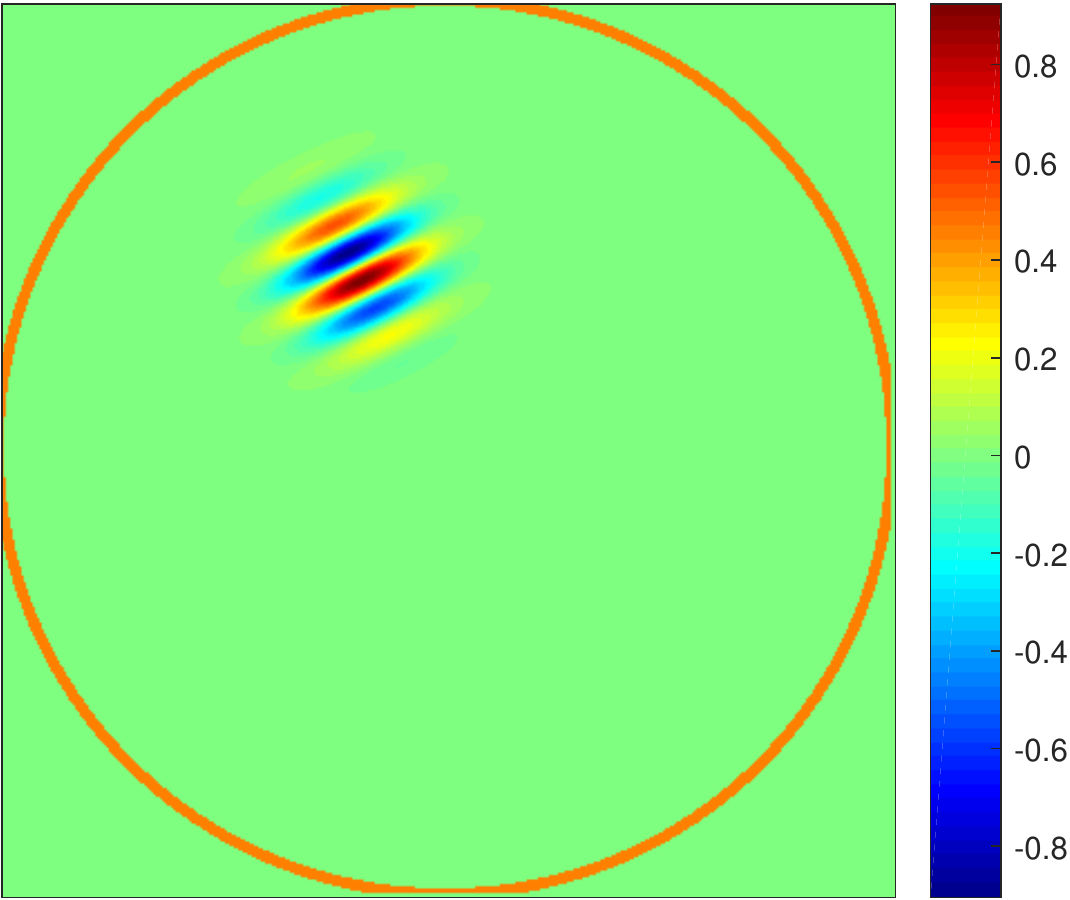}
	\end{subfigure}
	\begin{subfigure}{0.3\textwidth}
		\centering
		\includegraphics[width=0.9\linewidth]{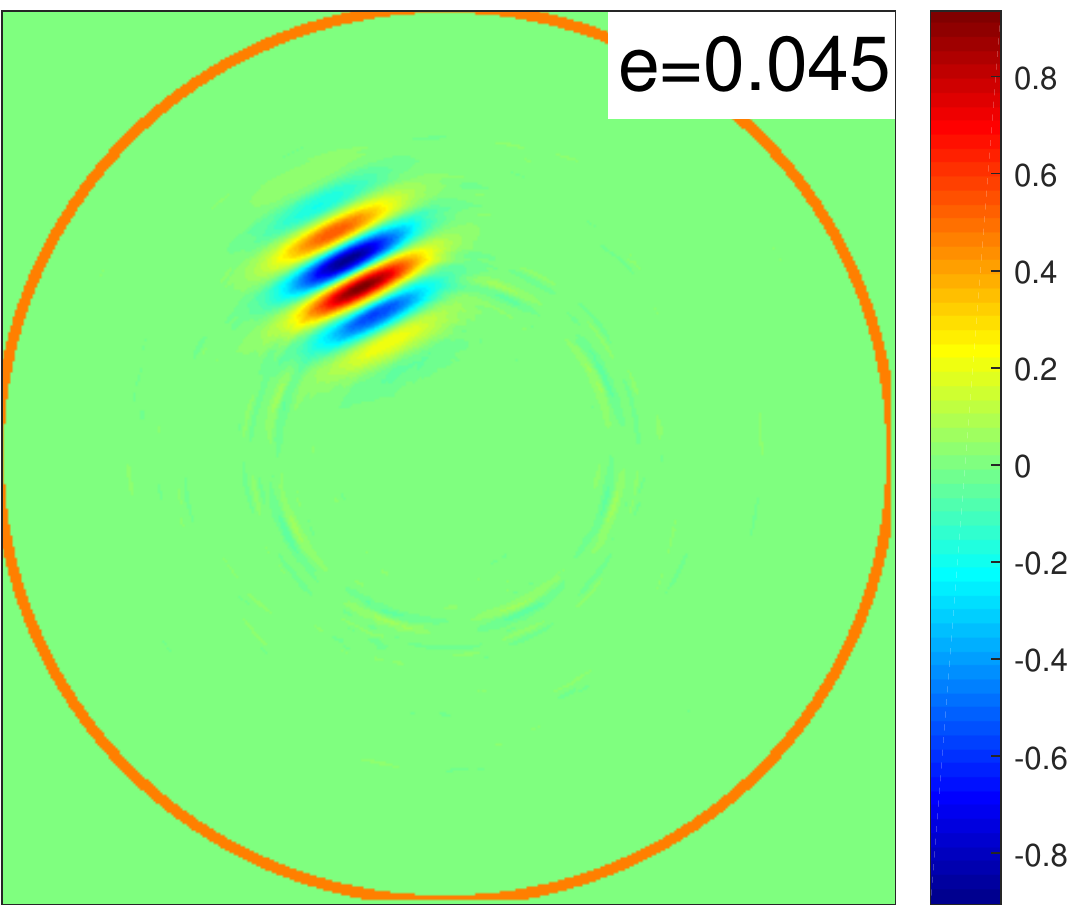}
	\end{subfigure}
	\begin{subfigure}{0.3\textwidth}
		\centering
		\includegraphics[width=0.9\linewidth]{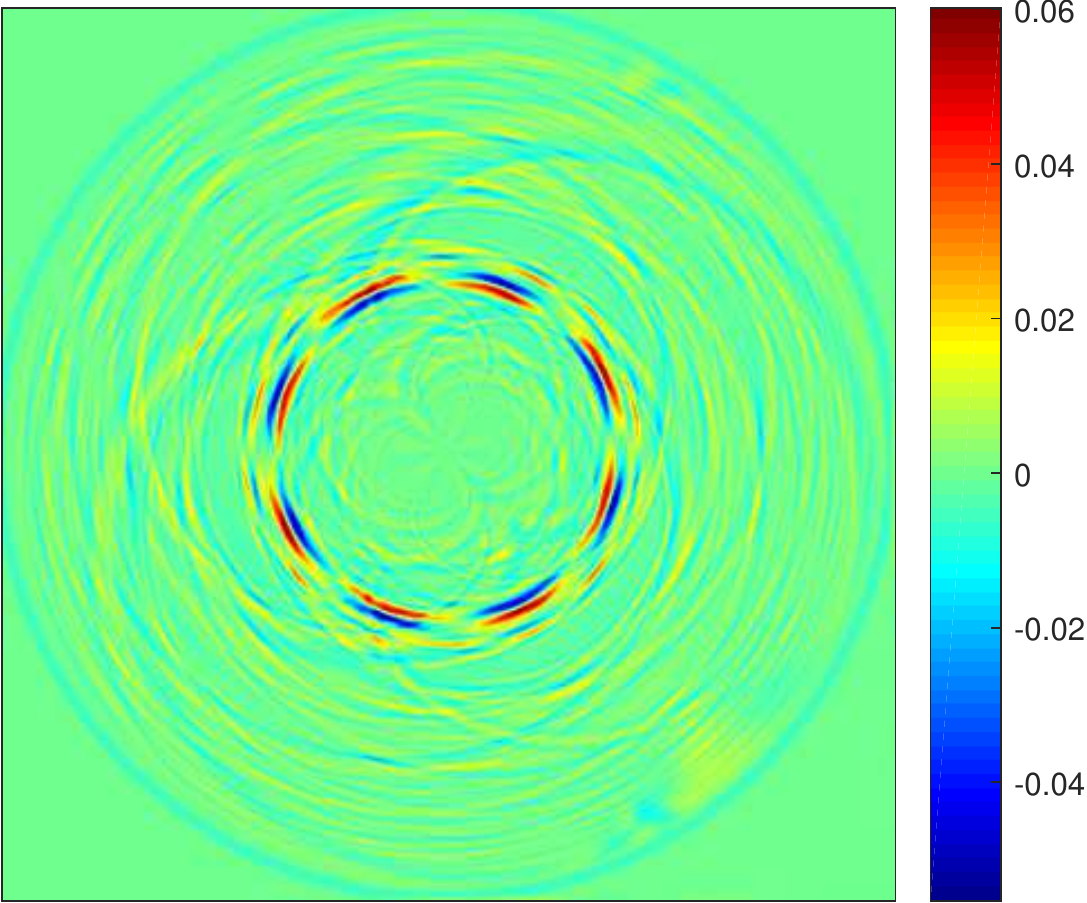}
	\end{subfigure}
	\caption{Another case of radial singularities. Left to right: true $f$, reconstruction $f^{(100)}$, error for $f$ with radial singularities after 100 iterations.  The relative error $e$ is defined as before. }
\end{figure}

We should mention that in the numerical reconstruction in \cite{Moon2017a}, the regularization (2.12) is used to remove the instabilities caused by these zeros. Therefore the artifacts are removed but on the other hand some true singularities are removed as well. In \cite{Schiefeneder2016}, the regularization is also used in the numerical reconstruction of a smiley phantom but we can still see some artifacts caused by the radial singularities (see Figure 2 in \cite{Schiefeneder2016}).


\section{The Parallel Rays Transform}\label{parallel_section}
We define the parallel X-ray transform as an integral transform over two or more equidistant parallel rays. The simplest case is the one over two parallel rays and is defined in the following
$$
\mathcal{P} f (s,\alpha)= \int_{x \cdot \omega(\alpha) = s} f(x)dx + \int_{x \cdot \omega(\alpha) = s+d} f(x)dx.
$$
It can be regarded as one example of the broken ray transform that we defined in Section \ref{setup_sec}, if we suppose the two rays are connected by a smooth curve outside the support of $f$ or simply at the infinity. Additionally, the diffeomorphism $\chi$ is the translation which maps $(s,\alpha)$ to $ (s+d,\alpha)$. Following the previous notations and calculation, we have
$$
\frac{d \alpha_2}{d \alpha_1} = 1, \quad \frac{d s_2}{d s_1} = -\langle p, v(\alpha_1) \rangle.
$$

By Proposition \ref{conjugatepp}, if $p$ in the incoming ray $(s_1,\alpha_1)$ has a conjugate point $q$ in the outgoing ray $(s_2,\alpha_2)$, then $q$ is determined by 
$
\langle q, v(\alpha_2) \rangle = \langle p, v(\alpha_1) \rangle,
$
which implies $ q = p + w(\alpha_1) d$.
Then Theorem \ref{cancelofC} shows a singularities $(x,\xi)$ can be canceled by $(y,\eta)$ if and only if $x$ and $y$ are conjugate points and $\xi = \eta$. It is shown in Figure \ref{pararecoverable} that the artifacts arising when we use the backprojection as the first attempt to recover $f$.

Now we consider the reconstruction by iteration process. Suppose $(x_0,\xi^0) \in \text{\(\WF\)}(f)$ belongs to the ray $(s,\alpha)$. It has two conjugate vectors, that is, $(x_0  \pm w(\alpha) d,\xi^0)$. We follow the same analysis as in the previous section to have  
$$
\mathcal{M}(x_0,\xi^0) = \{ (x_0 + i w(\alpha)d,\xi^0),\ i = 0, \pm 1, \pm 2, \ldots \}.
$$
The typology of $\mathcal{M}(x_0,\xi^0)$ is quite clear. It is a discrete set of points which has equal distance. Assume $\mathcal{P}f$ is smooth. Then $(x_0,\xi^0) \in \text{\(\WF\)}(f)$ implies $\mathcal{M}(x_0,\xi^0) \subset \text{\(\WF\)}(f)$, by the same argument as before. Thus, we have the following result, see also \cite{MR3080199}.
\begin{pp}
	Suppose $f \in \mathcal{D}'(\mathbb{R}^2)$ and assume $\mathcal{P}f$ is smooth. Then for any $(x,\xi)$, either $\mathcal{M}(x,\xi) \subset \text{\(\WF\)}(f)$ or $\mathcal{M}(x,\xi) \cap \text{\(\WF\)}(f) = \varnothing$.
\end{pp}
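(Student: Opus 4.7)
The plan is to propagate singularities along the orbit $\mathcal{M}(x_0,\xi^0)$ by applying the local cancellation theorem from Section~\ref{cancellation_section} to each pair of consecutive conjugate points, and then to obtain the dichotomy by the contrapositive.

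Fix $(x_0,\xi^0)$ with $\xi^0 = \lambda w(\alpha)$, and let $(x_1,\xi^1) = (x_0 + w(\alpha)d,\,\xi^0)$ be its conjugate covector along the broken ray $\nu_0$ consisting of the two parallel lines through $x_0$ and $x_1$. Choose disjoint conic neighborhoods $V^1, V^2$ of $(x_0,\xi^0)$, $(x_1,\xi^1)$ small enough that $C_1(V^1) = C_2(V^2)$, and decompose $f = f_1 + f_2 + g$ microlocally, where $\WF(f_i) \subset V^i$ and $g$ is smooth near $\nu_0$. Since $\mathcal{P} f$ is smooth, applying the local cancellation theorem of Section~\ref{cancellation_section} for every Sobolev order $s$ yields
\begin{equation*}
f_1 + F_{12} f_2 \in C^\infty \text{ near } (x_0,\xi^0).
\end{equation*}
If $(x_0,\xi^0) \in \WF(f)$, then $(x_0,\xi^0) \in \WF(f_1)$, and this singularity must be matched by one of $F_{12} f_2$. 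Since $F_{12}$ is an FIO whose canonical relation $C_{12}$ is an elliptic local diffeomorphism taking $(x_1,\xi^1)$ to $(x_0,\xi^0)$, the transformation law for wavefront sets under elliptic FIOs forces $(x_1,\xi^1) \in \WF(f_2) \subset \WF(f)$.

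The same argument applied to the broken ray through $x_0$ and $x_{-1}:= x_0 - w(\alpha)d$, with $x_0$ now in the outgoing ray, yields $(x_{-1},\xi^0)\in \WF(f)$. Iterating in both directions gives $(x_i,\xi^0)\in \WF(f)$ for every $i \in \mathbb{Z}$, that is, $\mathcal{M}(x_0,\xi^0) \subset \WF(f)$. Since $\mathcal{M}(x_j,\xi^0) = \mathcal{M}(x_0,\xi^0)$ for every $j$, the presence of any single point of $\mathcal{M}(x_0,\xi^0)$ in $\WF(f)$ forces the entire orbit to lie there, proving the dichotomy. The main technical point is the passage from the family of Sobolev cancellations $f_1 + F_{12}f_2 \in H^{s-1/2}$ for all $s$ to the $C^\infty$ statement above, together with correct microlocalization of $f \in \mathcal{D}'(\mathbb{R}^2)$ (which is handled by standard cutoffs away from $V^1 \cup V^2$); once these are in place, the induction is purely formal.
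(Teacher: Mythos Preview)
Your argument is correct and is essentially the same as the paper's. The paper does not spell out a proof here; it simply says the implication ``$(x_0,\xi^0)\in\WF(f)\Rightarrow \mathcal{M}(x_0,\xi^0)\subset\WF(f)$'' follows ``by the same argument as before,'' meaning the microlocal cancellation relation $R_{i-1}f_{i-1}+R_if_i\equiv 0\ (\mathrm{mod}\ C^\infty)$ from Section~\ref{cancellation_section} together with the propagation step used in the global analysis of Section~\ref{reflection_section}. Your use of $f_1+F_{12}f_2\in C^\infty$ and the ellipticity of $F_{12}$ is exactly that argument after applying $R_1^{-1}$; the only cosmetic point is that ``$g$ is smooth near $\nu_0$'' should be read as $\WF(g)\cap(V^1\cup V^2)=\varnothing$, which is what you actually use.
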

In particular, with the prior knowledge that $\text{\(\WF\)}(f)$ is in a compact set, the singularities are recoverable.
\begin{corollary}\label{pararecoverable}
	Suppose $f \in \mathcal{E}'(\mathbb{R}^2)$ and assume $\mathcal{P}f$ is smooth. Then $f$ is smooth.
\end{corollary}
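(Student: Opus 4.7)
The plan is to combine the preceding proposition, which forces $\mathcal{M}(x,\xi)$ to lie entirely inside or entirely outside $\WF(f)$, with the compactness of $\supp(f)$ provided by the hypothesis $f \in \mathcal{E}'(\mathbb{R}^2)$. I would argue by contradiction: suppose $f$ is not smooth, pick a point $(x_0,\xi^0) \in \WF(f)$ belonging to some ray $(s,\alpha)$, and then exploit the very explicit structure of $\mathcal{M}(x_0,\xi^0)$ derived just above the statement.

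The key step is to observe that
\[
\mathcal{M}(x_0,\xi^0) = \{(x_0 + i\, w(\alpha)\, d,\ \xi^0)\ :\ i \in \mathbb{Z}\}
\]
is an unbounded discrete set in the base: its projection onto $\mathbb{R}^2$ is the arithmetic progression $\{x_0 + i\, w(\alpha)\, d\}_{i \in \mathbb{Z}}$, which escapes every compact set as $|i|\to \infty$. Since $f \in \mathcal{E}'(\mathbb{R}^2)$, the wave front set satisfies $\pi(\WF(f)) \subset \mathrm{sing\,supp}(f) \subset \supp(f)$, which is compact. Hence only finitely many points of the projection of $\mathcal{M}(x_0,\xi^0)$ can lie in $\pi(\WF(f))$, so $\mathcal{M}(x_0,\xi^0) \not\subset \WF(f)$.

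Now I invoke the preceding proposition: under the assumption that $\mathcal{P}f$ is smooth, for every $(x,\xi)$ we have either $\mathcal{M}(x,\xi) \subset \WF(f)$ or $\mathcal{M}(x,\xi) \cap \WF(f) = \varnothing$. Applied to $(x_0,\xi^0)$, the first alternative is ruled out by the previous paragraph, so we must have $\mathcal{M}(x_0,\xi^0) \cap \WF(f) = \varnothing$. But $(x_0,\xi^0) \in \mathcal{M}(x_0,\xi^0)$, contradicting $(x_0,\xi^0) \in \WF(f)$. Therefore $\WF(f) = \varnothing$ and $f$ is smooth.

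There is no real obstacle here; the corollary is essentially an immediate application of the dichotomy proposition combined with compact support. The only thing worth double-checking is that $(x_0,\xi^0)$ itself is a member of $\mathcal{M}(x_0,\xi^0)$ (the $i=0$ term), so that the empty-intersection alternative indeed yields a contradiction, and that the distance $d > 0$ so the orbit is genuinely unbounded rather than collapsing to a point.
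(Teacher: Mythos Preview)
Your proof is correct and follows exactly the approach the paper intends: the paper does not spell out a separate proof of the corollary but simply remarks that, with the prior knowledge that $\WF(f)$ lies in a compact set, the dichotomy of the preceding proposition forces $\WF(f)=\varnothing$. Your argument is the natural and complete unpacking of that one-line observation.
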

\begin{figure}[h]
	\centering
	\begin{subfigure}{0.33\textwidth}
		\centering
		\includegraphics[width=0.9\linewidth]{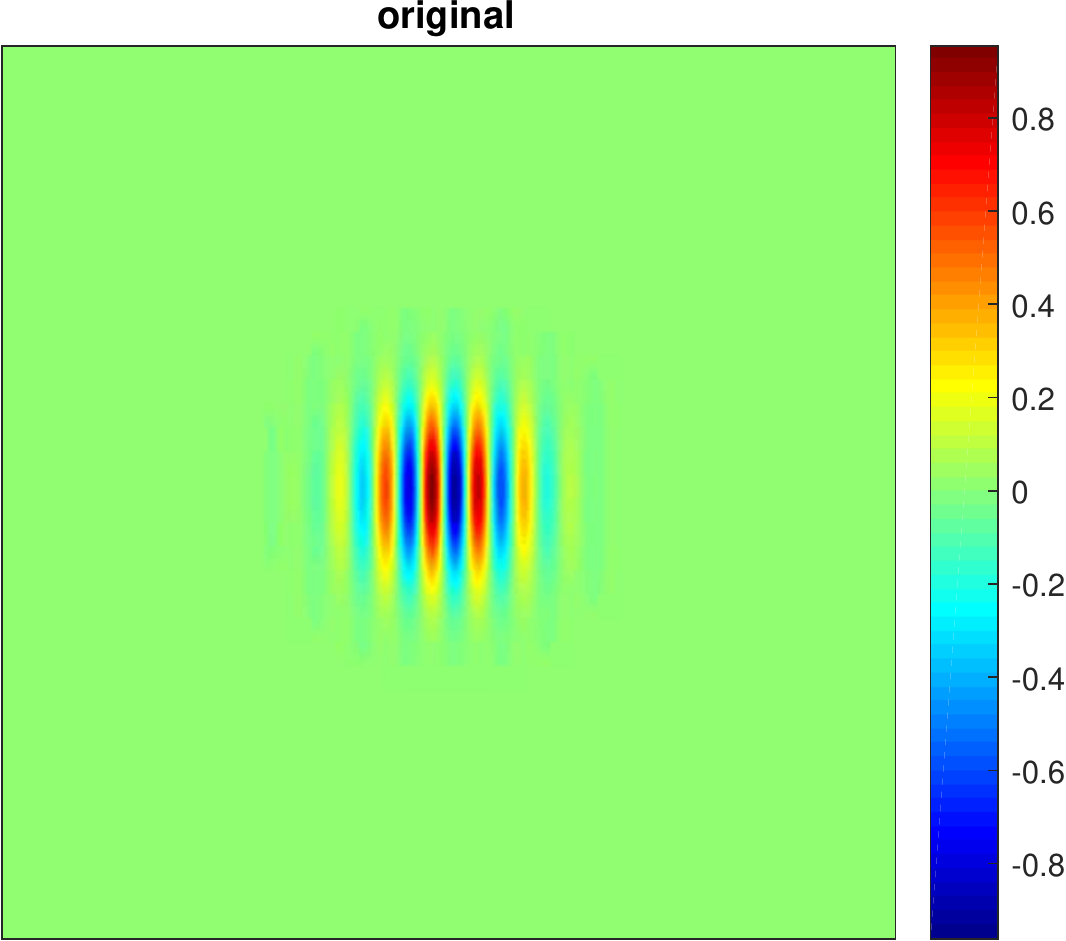}
	\end{subfigure}%
	\begin{subfigure}{0.33\textwidth}
		\centering
		\includegraphics[width=0.9\linewidth]{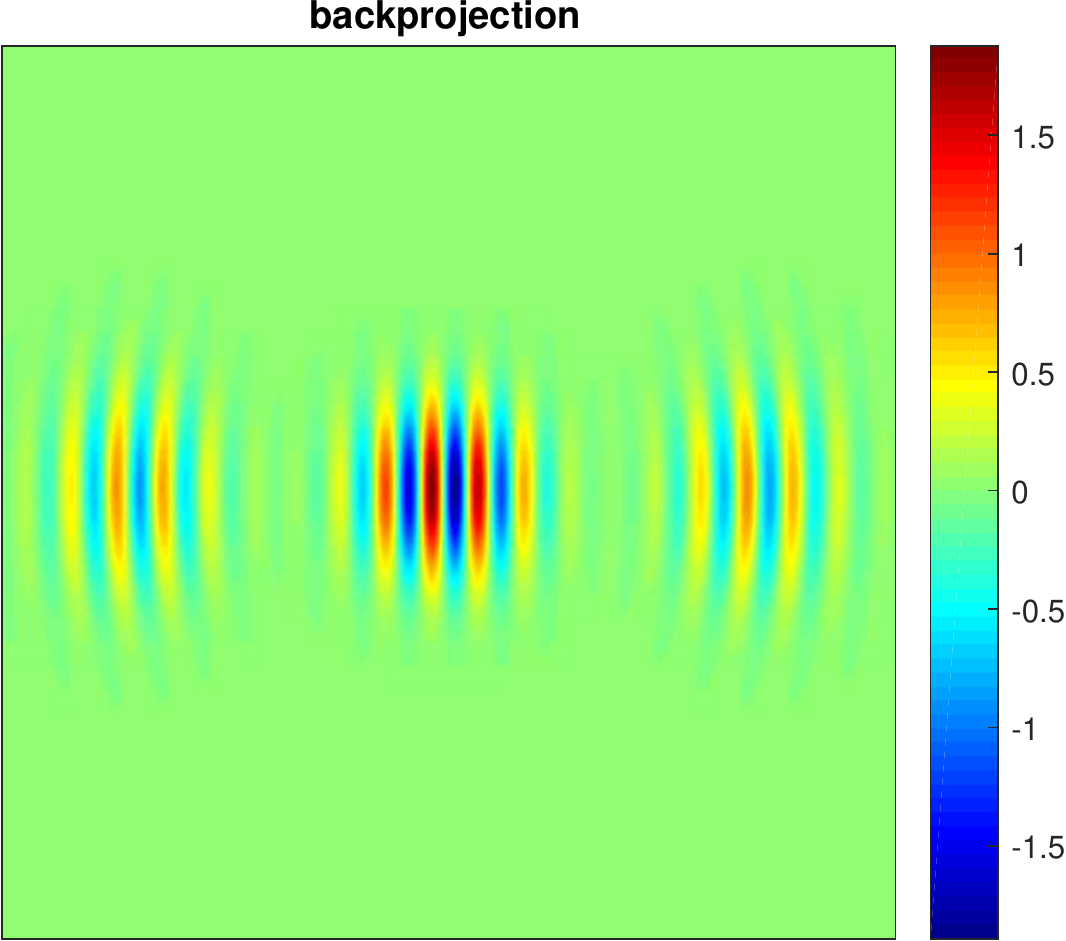}
		\label{pararecon1}
	\end{subfigure}
	\begin{subfigure}{0.33\textwidth}
		\centering
		\includegraphics[width=0.9\linewidth]{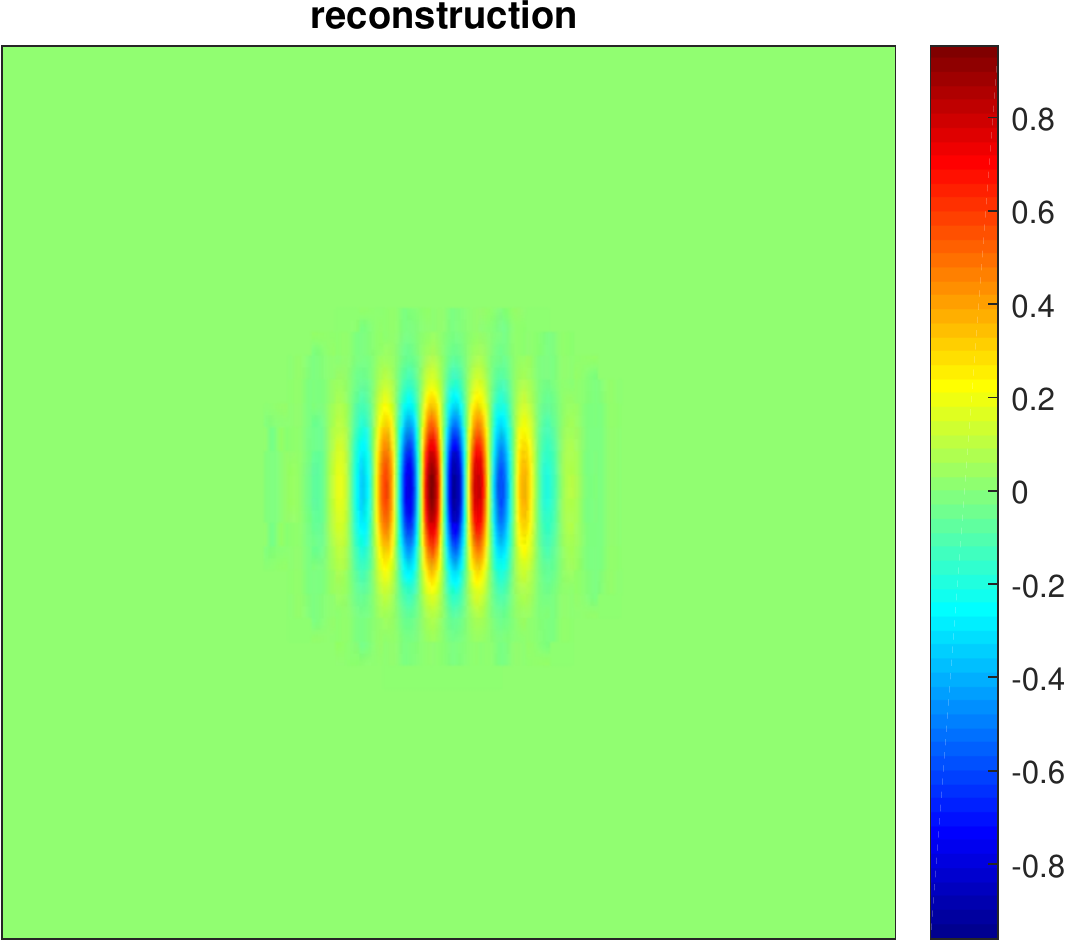}
		\label{pararecon2}
	\end{subfigure}
	\caption{Left to right: true $f$, backprojection $f^{(1)}$, $f^{(100)}$. }
	\label{test}
\end{figure}
In the numerical experiment, we use the Landweber iteration to reconstruct $f$. 
With the assumption that $f \in \mathcal{E}'(\mathbb{R}^2)$, a cutoff operator is performed at every step. 
After $100$ iterations, we get a quite good reconstruction (with $\|f^{(100)}-f\|_\infty=0.003$).

It should be mentioned that Corollary \ref{pararecoverable} shows $f$ with singularities in a compact set could be recovered from the global data. This implies when performing the transform, we move the parallel rays around until all of them leave the compact set. In fact, from our analysis above, the condition that the rays leaving at least one side the compact set is enough. On the other hand, the local problem (illumination of a region of interest only) could create artifacts.

\begin{footnotesize}
	\bibliographystyle{plain}
	\bibliography{microlocal_analysis}
\end{footnotesize}

\end{document}